\numberwithin{figure}{section}
\numberwithin{equation}{section}
\title{Heaps of pieces for lattice paths}
\author[K.~Shigechi]{Keiichi~Shigechi}
\email{k1.shigechi AT gmail.com}
\date{\today}
\newcommand\tikzpic[2]{
\raisebox{#1\totalheight}{
\begin{tikzpicture}
#2
\end{tikzpicture}
}}
\newtheorem{example}[figure]{Example}
\newtheorem{lemma}[figure]{Lemma}
\newtheorem{defn}[figure]{Definition}
\newtheorem{prop}[figure]{Proposition}
\newtheorem{cor}[figure]{Corollary}
\newtheorem{remark}[figure]{Remark}
\begin{document}

\begin{abstract}
We study heaps of pieces for lattice paths, which give a combinatorial visualization of 
lattice paths. We introduce two types of heaps: type $I$ and type $II$.
A heap of type $I$ is characterized by peaks of a lattice path. 
We have a duality between a lattice path $\mu$ and its dual $\overline{\mu}$ on heaps of type $I$. 
A heap of type $II$ for $\mu$ is characterized by the skew shape between the lowest path and $\mu$.
We give a determinant expression for the generating function of heaps for general 
lattice paths, and an explicit formula for rational $(1,k)$-Dyck paths by using the inversion lemma.
We introduce and study heaps in $k+1$-dimensions which are bijective to heaps of type $II$
for $(1,k)$-Dyck paths.
Further, we show a bijective correspondence between type $I$ and type $II$ in the case of rational 
$(1,k)$-Dyck paths.
As another application of heaps, we give two explicit formulae for the generating function of heaps
for symmetric Dyck paths in terms of statistics on Dyck paths and on symmetric Dyck paths respectively.
\end{abstract}

\maketitle

\section{Introduction}
The notion of heaps of pieces was introduced by X.~G.~Viennot \cite{Vie86} to give
a geometric visualization of the Cartier--Foata's commutation monoid.
This notion has vast applications to other research fields such as 
parallelism in computer science, directed animals in statistical physics,
and parallelogram polyominoes, Motzkin paths and orthogonal polynomials,  
Rogers--Ramanujan identities, fully commutative elements in Coxter groups in combinatorics 
(see \cite{BetPen01,BouMelRec02,BouMelVie92,Gar20,Kra06,Lal95,Ste96,Vie86,Vie87,Vie06,Vie17} and references therein).

In this paper, we study heaps of pieces for lattice paths.
Suppose that a lattice path $\mu_{0}$ is a path which consists of up steps $U$ and right steps $D$.
A heap of pieces is a visualization of a lattice path below $\mu_{0}$ and above $\mu_{1}:=D^{r}U^{s}$ 
where $r$ (resp. $s$) is the number of up (resp. right) steps in $\mu_{0}$.
We consider a class of lattice paths called rational Dyck paths.
A rational Dyck path, denoted by $(a,b)$-Dyck path, is a lattice 
path from $(0,0)$ to $(bn,an)$ and below the path $y=ax/b$ and above $y=0$.
Especially, we consider a special class of rational Dyck paths: $(1,b)$-Dyck paths and $(a,1)$-Dyck paths.
Given a lattice path $\mu_{0}$, we introduce two types of heaps of pieces, which we 
call type $I$ and type $II$.
The heaps of type $I$ is characterized by peaks of a lattice path $\mu$, a pattern $UD$ in $\mu$, 
and the heaps of type $II$ is characterized by the skew shape $\mu/\mu_{0}$.
The heaps of type $I$ and type $II$ are bijective to each other.

In type $I$, we have a duality between a lattice path $\mu$ and its dual path $\overline{\mu}$.
In the case of generalized Dyck paths, this duality is the duality between 
$(1,b)$-Dyck paths and $(b,1)$-Dyck paths.
A heap for a $(1,b)$-Dyck path consists of pieces with different sizes. 
We have constraints on the size of a piece: it depends only on its position.
Similarly, a heap for a $(b,1)$-Dyck path consists of only pieces of size one,
called a dimer.  We have some constraints on the positions of pieces.
The duality can be seen as an exchange of the roles of a size and a position 
in the constraints on heaps for $(1,b)$- and $(b,1)$-Dyck paths.
By introducing a valuation on a heap, one can define a generating function 
of heaps. We observe that there exists a duality between the generating functions 
for $\mu$ and $\overline{\mu}$.

The heaps of type $II$ are useful to compute the generating function. 
For a general lattice path, heaps give an interpretation of the generating function
in terms of non-intersecting lattice paths.
By the Lindstr\"om--Gessel--Viennot lemma, we show that the generating function 
can be expressed as a determinant.
In the case of $(1,b)$-Dyck paths, one can compute the generating functions 
by use of the extension of the inversion lemma in \cite{Kra06} (see also \cite{Vie86}), 
which connects the generating function of heaps with that of trivial heaps.
We give an interpretation of the generating function in terms of $(1,b)$-Dyck 
paths, and a functional equation for the generating function.
We also give another realization of heaps for $(1,b)$-Dyck paths.
By decomposing a $(1,b)$-Dyck path $\mathcal{D}$ of size $n$ into a $b$-tuple of 
Dyck paths of size $n$, we show a bijection between $\mathcal{D}$ and a heap 
in $b+1$-dimensions with some constraints.
The key idea is to construct an $n+1$-tuple of integer sequences of size $b$ from 
$\mathcal{D}$, and  a planar rooted tree on this $n+1$-tuple.
Then, we show that this planar rooted tree is bijective to a heap in $b+1$-dimensions, 
and the generating function of heaps in $b+1$-dimensions is expressed by the generating 
function of heaps of type $II$.

As an application of heaps, we introduce heaps for symmetric Dyck paths.
In this model, heaps consists of monomers and dimers.
The heaps satisfy similar properties to those of type $II$.
By applying the extended inversion lemma, we give a formula for the 
generating function of heaps, its expression by a continued fraction, and 
its functional equation.
We give two expressions of generating function of heaps in terms 
of statistics on symmetric Dyck paths and on Dyck paths respectively.

The paper is organized as follows.
In Section \ref{sec:HP}, we summarize the basic facts about heaps and 
introduce the extension of the inversion lemma.
In Section \ref{sec:HGDP}, we study heaps of pieces of type $I$ for generalized 
Dyck paths and show the duality between $(1,b)$-Dyck paths and $(b,1)$-Dyck paths.
In Section \ref{sec:HPLP}, the results in Section \ref{sec:HGDP} are generalized 
to the case of general lattice paths.
The heaps of type $II$ for general lattice paths are introduced in Section \ref{sec:LPGF}.
We give a determinant expression of the generating function of heaps.
In the case of $(1,k)$-Dyck paths, we study heaps in $k+1$-dimensions which are shown to be 
bijective to the heaps of type $II$. 
We construct a bijection between heaps of type $I$ and those of type $II$ in Section \ref{sec:ItoII}.
In Section \ref{sec:symDP}, we study heaps for the symmetric Dyck paths and give two formulae 
for the generating function in terms of symmetric Dyck paths and Dyck paths respectively.

\section{Heaps of pieces}
\label{sec:HP}
We introduce the notion of heaps of pieces following \cite{Vie86}.
Let $P$ be a set with a symmetric and reflexive binary relation $\mathcal{R}$.
This means that $a\mathcal{R}b\Leftrightarrow b\mathcal{R}a$ and $a\mathcal{R}a$
for every $a,b\in P$.
The relation $\mathcal{R}$ is called the {\it concurrency relation}.

\begin{defn}[{\cite[Definition 2.1]{Vie86}}]
Let $(E,\le)$ be a finite partially ordered set $E$ with the order relation $\le$.  
A labeled heap with pieces in $E$ is a triplet $(E,\le,\epsilon)$
with the map $\epsilon:E\rightarrow P$.
The map $\epsilon$ and $\le$ satisfy the following two conditions: 
\begin{enumerate}[({A}1)]
\item for every $\alpha,\beta\in E$ such that $\epsilon(\alpha)R\epsilon(\beta)$, then 
$\alpha$ and $\beta$ satisfies $\alpha\le\beta$ or $\beta\le\alpha$.
\item for every $\alpha,\beta\in E$ such that $\alpha<\beta$ and $\beta$ covers $\alpha$,
then $\epsilon(\alpha)\mathcal{R}\epsilon(\beta)$.
\end{enumerate}
We call the elements of $E$ {\it pieces}, and we say that the piece $\beta$ is above the piece 
$\alpha$ when $\alpha\le \beta$.
\end{defn}

In this paper, we consider only $P=\mathbb{N}_{\ge1}$ or the subset of $\mathbb{N}_{\ge1}^{k}$ 
for some positive integer $k\ge2$.

We introduce some terminologies for the later purpose.
A piece $\alpha$ is called a maximal piece if there is no piece $\beta$ such that 
$\alpha<\beta$.
A heap with a unique maximal piece is called a pyramid.
Let $m$ be a unique maximal piece of a pyramid.
The pyramid is said to be a semi-pyramid if the columns to the 
left of $m$ contain no piece. 

We introduce a class of heaps consisting of segments.
Let $P=\mathbb{N}_{\ge1}$ and a piece is a segment $[a,b]$ such that 
$a\le b$. If $a=b$ we call the segment a {\it monomer}.
If $b=a+1$, we call it a {\it dimer}.
The size of a segment $[a,b]$ is defined as $b-a$.
We drop segments from the top until it hits the horizontal line $y=0$ or 
it hits another segment. 
The two segments $[a_i,b_i]$, $1\le i\le 2$, have a relation 
if $[a_1,b_{1}]\cap[a_2,b_{2}]\neq\emptyset$. 
For example, Figure \ref{fig:HPstair} shows an example of 
a heap of segments of size $1$ and $2$.
When two segments have no relation (see $[1,2]$ and $[3,5]$ in Figure \ref{fig:HPstair}),
the order of dropping from top is irrelevant.
This means that these two segments are commutative.

Following \cite{Vie17}, we consider a decomposition of a heap via staircases.
Let $[a_i,b_i]$, $1\le i\le r$, be segments 
such that $b_{i}=a_{i+1}$ for all $1\le i\le r-1$ and 
$[a_{i},b_{i}]$ is just above $[a_{i+1},b_{i+1}]$, i.e., 
the segment $[a_{i},b_{i}]$ covers $[a_{i+1},b_{i+1}]$ in $E$.
We say that the set of these $r$ segments forms a {\it staircase}.
In the class of heaps consisting of segments, any heaps can 
be decomposed into staircases.
For this decomposition, we first take a largest staircase which contains the right-most 
maximal piece. 
Here, a maximal piece $[a_{\max},b_{\max}]$ means that there is no pieces $[c,d]$ such 
that $[c,d]$ covers $[a_{\max},b_{\max}]$ in $E$ and the segment $[c,d]$ is above $[a_{\max},b_{\max}]$.
Note that there may be several maximal pieces in a heap.  
Then, we remove the staircase which contains the right-most maximal piece 
from the heap, and continue to remove 
staircases one-by-one in the same way.
By construction, this decomposition is unique.
In Figure \ref{fig:HPstair}, we have two staircases:
\begin{align*}
S_{1}:=[1,2]\circ[2,4]\circ[4,5], \qquad S_{2}:=[3,5]\circ[5,6]\circ[6,7],
\end{align*}
where $[a,b]\circ[b,c]$ means the segment $[a,b]$ is above $[b,c]$ and 
these two segments form a staircase.
%%%%%%%%%%%%%%%%%%%%%%
\begin{figure}[ht]
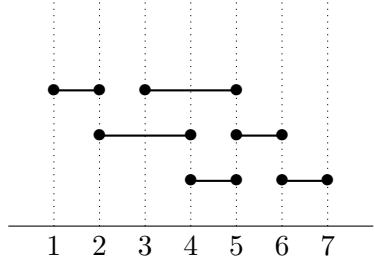

\tikzpic{-0.5}{[scale=0.6]
\draw(0,0)--(8,0);
\foreach \x in{1,2,...,7}{
\draw(\x,0)node[anchor=north]{$\x$};
\draw[dotted](\x,0)--(\x,5);
}
\draw[thick](4,1)node{$\bullet$}--(5,1)node{$\bullet$};
\draw[thick](2,2)node{$\bullet$}--(4,2)node{$\bullet$};
\draw[thick](1,3)node{$\bullet$}--(2,3)node{$\bullet$};
\draw[thick](6,1)node{$\bullet$}--(7,1)node{$\bullet$};
\draw[thick](5,2)node{$\bullet$}--(6,2)node{$\bullet$};
\draw[thick](3,3)node{$\bullet$}--(5,3)node{$\bullet$};
}
\caption{An example of a heap.}
\label{fig:HPstair}
\end{figure}
%%%%%%%%%%%%%%%%%%%%
If the sizes of pieces depend only on the left abscissa of it,
we write a staircase as a segment $[a_1,b_{r}]$ by abuse of notation.
A staircase $[a_1,b_{r}]$ is uniquely written as a concatenation 
of segments:
\begin{align*}
[a_1,b_r]=[a_1,b_1]\circ[a_2,b_2]\circ\ldots\circ[a_r,b_r].
\end{align*}  
We write a heap in Figure \ref{fig:HPstair} as $S_{2}S_{1}$ which means that we 
first drop the staircase $S_1$ from the top and drop $S_{2}$ after $S_{1}$.
Note that this heap is different from the heap $S_{1}S_{2}$.
When two staircases have a concurrency relation as pieces, the order of 
the staircases plays an important role to express heaps in terms of 
staircases.

We introduce a valuation of a heap following \cite[Section 5]{Vie86}.
Let $\mathcal{H}(E)$ be the set of heaps of pieces from $E$.
Let $\mathbb{K}[[X]]$ be the algebra of formal power series with 
variables in a set $X$ and with coefficients in the 
commutative ring $\mathbb{K}$.
In this paper, we consider only $\mathbb{K}=\mathbb{C}$.
Given a piece in $E$, we define 
a {\it valuation} $w:E\rightarrow\mathbb{K}[[X]]$ which 
associates to the basic piece $p\in E$ a power series $v(p)$.
In this paper, we consider the case where $v(p)$ is a monomial in $X$.
The valuation $w(H)$ of a heap $H\in\mathcal{H}(E)$ is the product 
of the valuations of its pieces, i.e., $w(H)=\prod_{p\in H}w(p)$.

\begin{defn}
A trivial heap is a heap such that no pieces are above another.
We denote by $\mathcal{T}(E)$ the set of all trivial heaps with pieces from $E$.
\end{defn}

Let $\mathcal{M}$ be a subset of the pieces in $E$, and 
$\mathcal{H}(E;\mathcal{M})$
be the set of heaps with maximal pieces contained in $\mathcal{M}$.
The next proposition is an extension of inversion lemma (\cite[Proposition 5.1]{Vie86}).
\begin{prop}[{\cite[Corollary 4.5]{Kra06}}]
\label{prop:inversion}
We have 
\begin{align}
\label{eq:inversion}
\sum_{H\in\mathcal{H}(E;\mathcal{M})}w(H)
=\left(\sum_{T\in\mathcal{T}(E)}(-1)^{|T|}w(T)\right)^{-1}
\left(\sum_{T\in\mathcal{T}(E\setminus\mathcal{M})}(-1)^{|T|}w(T)\right),
\end{align}
where $\mathcal{T}(E)$ is the set of all trivial heaps.
\end{prop}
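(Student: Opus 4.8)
The plan is to recast \eqref{eq:inversion} in product form. The empty trivial heap contributes $1$, so the first factor on the right is an invertible element of $\mathbb{K}[[X]]$, and \eqref{eq:inversion} is equivalent to
\begin{align*}
\left(\sum_{T\in\mathcal{T}(E)}(-1)^{|T|}w(T)\right)\left(\sum_{H\in\mathcal{H}(E;\mathcal{M})}w(H)\right)
=\sum_{T\in\mathcal{T}(E\setminus\mathcal{M})}(-1)^{|T|}w(T).
\end{align*}
I would prove this identity by expanding the left-hand side as a weighted sum over pairs $(T,H)$, with $T\in\mathcal{T}(E)$ and $H\in\mathcal{H}(E;\mathcal{M})$, and then collecting terms according to the heap $K$ obtained by first dropping $T$ and then letting $H$ fall onto it, so that the pieces of $T$ occupy the bottom of $K$. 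Since the valuation is multiplicative, $w(T)w(H)=w(K)$, and the coefficient of $w(K)$ is the signed count $c(K)=\sum(-1)^{|T|}$ taken over all decompositions of $K$ of this form.

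The next step is to describe the fiber over a fixed $K$. Dropping a trivial heap $T$ first forces every piece of $T$ to be minimal in $K$, and conversely any subset of the set $\mathrm{Min}(K)$ of minimal pieces is a trivial heap: distinct minimal pieces are incomparable, and by (A1) incomparable pieces are not concurrent. Hence the admissible $T$ are exactly the subsets $T\subseteq\mathrm{Min}(K)$ for which $H=K\setminus T$ still lies in $\mathcal{H}(E;\mathcal{M})$, that is, $\mathrm{Max}(K\setminus T)\subseteq\mathcal{M}$. The structural point that makes everything decouple is that removing minimal pieces exposes no new maximal piece: if $T\subseteq\mathrm{Min}(K)$, then no piece of $T$ lies above any other piece, so the set of pieces above each surviving piece is unchanged, giving $\mathrm{Max}(K\setminus T)=\mathrm{Max}(K)\setminus T$. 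Writing $Z:=\mathrm{Max}(K)\cap(E\setminus\mathcal{M})$, the admissibility condition becomes simply $Z\subseteq T\subseteq\mathrm{Min}(K)$.

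The computation then finishes by binomial cancellation: $c(K)=\sum_{Z\subseteq T\subseteq\mathrm{Min}(K)}(-1)^{|T|}=(-1)^{|Z|}(1-1)^{|\mathrm{Min}(K)\setminus Z|}$, which vanishes unless $\mathrm{Min}(K)=Z$. The equality $\mathrm{Min}(K)=Z\subseteq\mathrm{Max}(K)$ forces every minimal piece to be maximal, so $K$ is a trivial heap all of whose pieces lie in $E\setminus\mathcal{M}$, and for such $K$ we get $c(K)=(-1)^{|K|}$. Summing $c(K)\,w(K)$ over all $K$ leaves exactly $\sum_{T\in\mathcal{T}(E\setminus\mathcal{M})}(-1)^{|T|}w(T)$, which is the desired identity. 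The delicate choice — and the step I would isolate as a lemma — is to place the alternating trivial heaps at the \emph{bottom} rather than the top: had $T$ been dropped last, removing it could expose new maximal pieces, so the constraint $\mathrm{Max}(K\setminus T)\subseteq\mathcal{M}$ would no longer decouple from $T$ and the clean cancellation would break down. The real content therefore lies in verifying the two structural facts from the heap axioms, namely that subsets of $\mathrm{Min}(K)$ are trivial heaps and that $\mathrm{Max}(K\setminus T)=\mathrm{Max}(K)\setminus T$.
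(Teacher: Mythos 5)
Your proof is correct, and it is worth noting at the outset that the paper itself contains no argument to compare against: Proposition \ref{prop:inversion} is imported verbatim from Krattenthaler \cite{Kra06} (Corollary 4.5), where the standard proof runs through an explicit sign-reversing involution on pairs $(T,H)$ in the spirit of Viennot. What you do differently is to push the double sum onto the composite heap $K=T\circ H$ and cancel fiberwise: the fiber over $K$ is the Boolean interval $\{T: Z\subseteq T\subseteq\mathrm{Min}(K)\}$ with $Z=\mathrm{Max}(K)\cap(E\setminus\mathcal{M})$, and the alternating sum over that interval is $(-1)^{|Z|}(1-1)^{|\mathrm{Min}(K)\setminus Z|}$. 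This is the "collapsed" form of the usual toggling involution (toggle membership of a distinguished element of $\mathrm{Min}(K)\setminus Z$), so the combinatorial content is the same, but your organization has the virtue of isolating exactly the two structural facts that make the cancellation work — that subsets of $\mathrm{Min}(K)$ are trivial heaps by axiom (A1), and that $\mathrm{Max}(K\setminus T)=\mathrm{Max}(K)\setminus T$ for $T\subseteq\mathrm{Min}(K)$ — and your closing remark correctly identifies why $T$ must sit at the bottom: with $T$ on top, removal exposes new maximal pieces and the fiber is no longer an interval.

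Three small points you should make explicit if you write this up. First, the fiber description uses the standard factorization fact both ways: every factorization $K=T\circ H$ with $T$ trivial has the pieces of $T$ minimal in $K$, and conversely $K=T\circ(K\setminus T)$ for any $T\subseteq\mathrm{Min}(K)$; the converse direction (minimal pieces slide out to the bottom) deserves a sentence. Second, in justifying $\mathrm{Max}(K\setminus T)=\mathrm{Max}(K)\setminus T$, the clean argument is that any witness $\alpha<_{K}\beta$ refines to a covering chain whose members above $\alpha$ are all non-minimal, hence disjoint from $T$; since consecutive members are concurrent by (A2), the whole chain and its relative order survive in the re-dropped heap $K\setminus T$, so no non-maximal piece becomes maximal (your one-line version gestures at this but the chain argument is what actually rules out order relations collapsing when intermediate pieces are deleted). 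Third, your formula $c(K)=\sum_{Z\subseteq T\subseteq\mathrm{Min}(K)}(-1)^{|T|}$ silently handles the case $Z\not\subseteq\mathrm{Min}(K)$ as an empty sum, which is right but worth saying, and the regrouping of the infinite double sum by $K$ is legitimate here because valuations are monomials, so each monomial of $\mathbb{K}[[X]]$ receives only finitely many contributions; finally, since $\mathbb{K}[[X]]$ is commutative, the left-inverse placement in \eqref{eq:inversion} is immaterial, which is why your product form is equivalent to the stated quotient.
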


Note that the left hand side of (\ref{eq:inversion}) in Proposition \ref{prop:inversion} 
is the generating function for the heaps $H$.
We apply this proposition to the calculation of the several generating functions for heaps
of type $II$ and of symmetric Dyck paths in Sections \ref{sec:LPGF} and \ref{sec:symDP} respectively.

\section{Heaps of type \texorpdfstring{$I$}{I} for generalized Dyck paths}
\label{sec:HGDP}
\subsection{Lattice paths}
A {\it lattice path} is a path from $(0,0)$ to $(x,y)$ with $x,y\ge0$ 
which consists of up steps $(0,1)$ and right steps $(1,0)$. 
We denote an up (resp. right) step by $U$ (resp. $D$), 
and write a path $\mu$ as a word whose alphabets are in $\{U,D\}$.
For example, $\mu=UDDUUD$ corresponds to the following path:
\begin{align*}
\tikzpic{-0.5}{[scale=0.5]
\draw[thick](0,0)--(0,1)--(2,1)--(2,3)--(3,3);
\draw[dotted](0,0)--(3,0)--(3,3)(1,0)--(1,1)(2,0)--(2,1)--(3,1)(2,2)--(3,2);
}
\end{align*}
There are ten lattice paths below $\mu=UDDUUD$.
By definition of a lattice path, these ten lattice paths are above 
the path $D^3U^3$.

A {\it peak} of a lattice path $\mu$ is a pattern $UD$ in $\mu$.
The path $UDDUUD$ has two peaks.
Similarly, a pattern $DU$ in $\mu$ is called a {\it valley}.
It is obvious that if we determine the position of peaks and valleys, 
a lattice path is uniquely determined.
Therefore, a lattice path is bijective to the sets of peaks and valleys.
Later in Sections \ref{sec:HGDP} and \ref{sec:HPLP}, 
we give a characterization of a lattice path by the positions 
of the peaks in the skew diagram which is surrounded by two lattice paths.
This implies that a lattice path is uniquely determined by 
the set of the positions of peaks. In this case, we do not need the 
positions of valleys since two peaks determine a valley uniquely.

A {\it rational Dyck path} (or simply {\it $(a,b)$-Dyck path}) of size $n$ is a lattice path from 
$(0,0)$ to $(bn,an)$ which is below 
the line $y=ax/b$ and above the line $y=0$.
For example, we have seven $(3,5)$-Dyck paths of size one. The highest path is $D^2UD^2UDU$ and 
the lowest path is $D^5U^3$.

We consider a special class of rational Dyck paths: $(1,b)$- and $(a,1)$-Dyck paths.
A $(1,b)$-Dyck path of size $n$ is a lattice path from $(0,0)$ to $(bn,n)$ below
$(UD^b)^n$ and above the line $y=0$.
Similarly, a $(a,1)$-Dyck path is a lattice path from $(0,0)$ to $(n,an)$ 
below $(U^aD)^{n}$ and above the line $y=0$. 
Let $D_1:=d_1\ldots d_{N}\in\{U,D\}^{\ast}$ be a $(a,b)$-Dyck path with $N=(a+b)n$ 
in terms of up and right steps.
We have a natural bijection between a $(a,b)$-Dyck path $D_1$ 
and $(b,a)$-Dyck path $D_{2}:=\bar{d_N}\bar{d_{N-1}}\ldots \bar{d_1}$
where $\bar{U}=D$ and $\bar{D}=U$. 
We say that the path $D_{2}$ is dual to the path $D_{1}$.

\subsection{Heaps for \texorpdfstring{$(1,b)$}{(1,b)}-Dyck paths}
\label{sec:1b}
We will construct heaps for $(1,b)$-Dyck paths of size $n$.
By definition, a $(1,b)$-Dyck path is a path below 
$(UD^{b})^{n}$ and above $D^{bn}U^{n}$.
When $n=0$, we have a unique path $\emptyset$.
We define a heap for $\emptyset$ as a trivial heap without pieces.

Suppose $n\ge1$.
We assign positive integers on the border of the path $D^{bn}U^{n}$
as follows.
We assign $bn-i$ to the point $(i,0)$, and $b(n+1-j)+1$ to the point $(bn,j)$
where $0\le i\le bn-1$ and $1\le j\le n$.

A peak in the lattice path $\mu$ is the pattern of consecutive steps $UD$.
Let $\mathtt{Peak}(\mu)$ be the set of peaks in $\mu$ and  $(i,j)\in\mathtt{Peak}(\mu)$ be the 
coordinate of the peak.
We read two labels $l_{i}$ and $r_{j}$ at the positions $(i,0)$ and $(bn,j)$.
By construction, we have $l_{i}<r_{j}$ and $r_{j}=1+b(n+1-j)$.
Define the set
\begin{align*}
\mathcal{S}(\mu):=\{[l_i,r_{j}] | (i,j)\in\mathtt{Peak}(\mu) \}.
\end{align*}

We construct a heap of pieces from $\mathcal{S}(\mu)$ as follow.
Formally, we take $P=\mathbb{N}_{\ge1}$, and $E$ be the set of segments 
of length $b'$ where $1\le b'\le b$.
We impose the following two conditions on a heap:
\begin{enumerate}[(B1)]
\item 
The length $l(p)$ of a piece $p$ at position $i$ satisfies
\begin{align*}
l(p)=b+1-i',
\end{align*}
where $i'\equiv i\pmod{b}$ and $i'\in[1,b]$.
\item There is no pair of two staircases $[l_i,r_j]$ and $[l_{i'},r_{j'}]$ 
such that 
\begin{align*}
l_{i}\le l_{i'}<r_{j'}\le r_{j}.
\end{align*}
\item There are at most $n$ staircases.
A staircase $[l,r]$ satisfies $1\le l\le r\le nb+1$.
\end{enumerate}

We introduce a linear order on the set $\mathcal{S}(\mu)$.
Given two staircases $[l_i,r_{j}]$ and $[l_{i'},r_{j'}]$, we write
\begin{align*}
[l_i,r_{j}]<[l_{i'},r_{j'}] \quad \text{ if } l_i<l_{i'}.
\end{align*}
Since a staircase $[l_i,r_j]$ corresponds to a peak in $\mu$, 
all $l_i$'s in $\mathcal{S}(\mu)$ are distinct.
This is compatible with the condition (B2).
We impose the condition (B3) since a peak corresponds to a staircase, and 
there are at most $n$ peaks in a $(1,b)$-Dyck path of size $n$.

Each segment $[l_i,r_j]$ corresponds to a staircase whose maximal 
piece is placed at the position $l_i$ and its total length is $r_{j}-l_{i}$.
Since the length of a piece depends only on its position, a staircase can be decomposed 
into a unique concatenation of pieces.
To have a heap, we pile a staircase according to the order of elements in $\mathcal{S}(\mu)$.
In this way, we obtain a heap of pieces corresponding to $\mathcal{S}(\mu)$.

\begin{defn}
We call a heap satisfying conditions (B1), (B2) and (B3) a heap of type $I$ for 
$(1,b)$-Dyck paths.
We denote by $\mathcal{H}_{(1,b)}(n)$ be the set of heaps of type $I$
for $(1,b)$-Dyck paths of size $n$.
\end{defn}

For example, we have twelve heaps for $b=2$ and $n=2$ as in Figure \ref{fig:HPk2}.
%%%%%%%%%%%%%
\begin{figure}[ht]
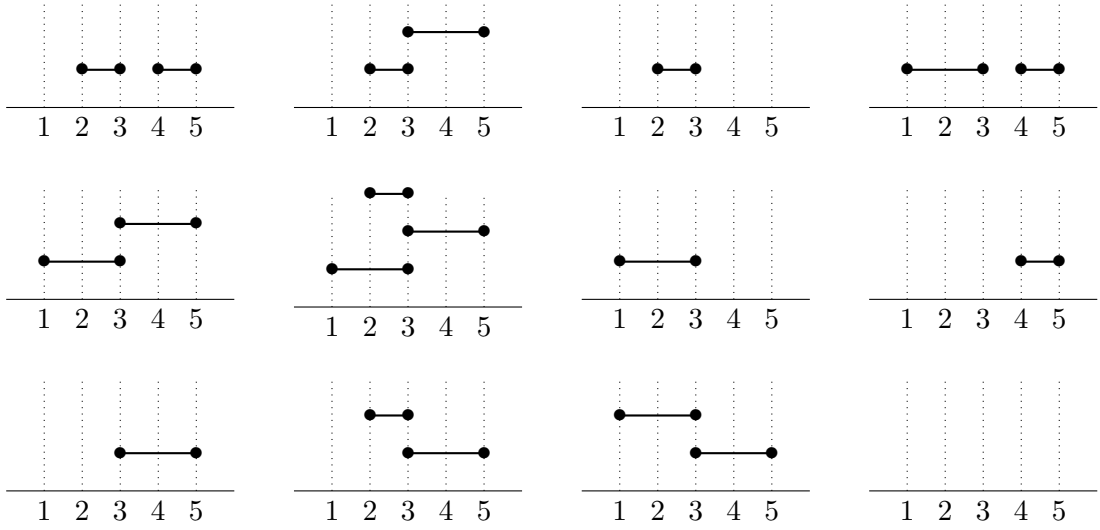

\tikzpic{-0.5}{[scale=0.5]
\draw(0,0)--(6,0);
\foreach \x in {1,2,3,4,5}{
\draw[dotted](\x,0)node[anchor=north]{$\x$}--(\x,3);	
}
\draw[thick](2,1)node{$\bullet$}--(3,1)node{$\bullet$}(4,1)node{$\bullet$}--(5,1)node{$\bullet$};
}\quad 
\tikzpic{-0.5}{[scale=0.5]
\draw(0,0)--(6,0);
\foreach \x in {1,2,3,4,5}{
\draw[dotted](\x,0)node[anchor=north]{$\x$}--(\x,3);	
}
\draw[thick](2,1)node{$\bullet$}--(3,1)node{$\bullet$}(3,2)node{$\bullet$}--(5,2)node{$\bullet$};
}\quad 
\tikzpic{-0.5}{[scale=0.5]
\draw(0,0)--(6,0);
\foreach \x in {1,2,3,4,5}{
\draw[dotted](\x,0)node[anchor=north]{$\x$}--(\x,3);	
}
\draw[thick](2,1)node{$\bullet$}--(3,1)node{$\bullet$};
}\quad 
\tikzpic{-0.5}{[scale=0.5]
\draw(0,0)--(6,0);
\foreach \x in {1,2,3,4,5}{
\draw[dotted](\x,0)node[anchor=north]{$\x$}--(\x,3);	
}
\draw[thick](1,1)node{$\bullet$}--(3,1)node{$\bullet$}(4,1)node{$\bullet$}--(5,1)node{$\bullet$};
}\\[11pt]
\tikzpic{-0.5}{[scale=0.5]
\draw(0,0)--(6,0);
\foreach \x in {1,2,3,4,5}{
\draw[dotted](\x,0)node[anchor=north]{$\x$}--(\x,3);	
}
\draw[thick](1,1)node{$\bullet$}--(3,1)node{$\bullet$}(3,2)node{$\bullet$}--(5,2)node{$\bullet$};
}\quad 
\tikzpic{-0.5}{[scale=0.5]
\draw(0,0)--(6,0);
\foreach \x in {1,2,3,4,5}{
\draw[dotted](\x,0)node[anchor=north]{$\x$}--(\x,3);	
}
\draw[thick](1,1)node{$\bullet$}--(3,1)node{$\bullet$}(2,3)node{$\bullet$}--(3,3)node{$\bullet$}
(3,2)node{$\bullet$}--(5,2)node{$\bullet$};
}\quad 
\tikzpic{-0.5}{[scale=0.5]
\draw(0,0)--(6,0);
\foreach \x in {1,2,3,4,5}{
\draw[dotted](\x,0)node[anchor=north]{$\x$}--(\x,3);	
}
\draw[thick](1,1)node{$\bullet$}--(3,1)node{$\bullet$};
}\quad 
\tikzpic{-0.5}{[scale=0.5]
\draw(0,0)--(6,0);
\foreach \x in {1,2,3,4,5}{
\draw[dotted](\x,0)node[anchor=north]{$\x$}--(\x,3);	
}
\draw[thick](4,1)node{$\bullet$}--(5,1)node{$\bullet$};
}\\[11pt]
\tikzpic{-0.5}{[scale=0.5]
\draw(0,0)--(6,0);
\foreach \x in {1,2,3,4,5}{
\draw[dotted](\x,0)node[anchor=north]{$\x$}--(\x,3);	
}
\draw[thick](3,1)node{$\bullet$}--(5,1)node{$\bullet$};
}\quad 
\tikzpic{-0.5}{[scale=0.5]
\draw(0,0)--(6,0);
\foreach \x in {1,2,3,4,5}{
\draw[dotted](\x,0)node[anchor=north]{$\x$}--(\x,3);	
}
\draw[thick](2,2)node{$\bullet$}--(3,2)node{$\bullet$}(3,1)node{$\bullet$}--(5,1)node{$\bullet$};
}\quad 
\tikzpic{-0.5}{[scale=0.5]
\draw(0,0)--(6,0);
\foreach \x in {1,2,3,4,5}{
\draw[dotted](\x,0)node[anchor=north]{$\x$}--(\x,3);	
}
\draw[thick](1,2)node{$\bullet$}--(3,2)node{$\bullet$}(3,1)node{$\bullet$}--(5,1)node{$\bullet$};
}\quad 
\tikzpic{-0.5}{[scale=0.5]
\draw(0,0)--(6,0);
\foreach \x in {1,2,3,4,5}{
\draw[dotted](\x,0)node[anchor=north]{$\x$}--(\x,3);	
}
} 
\caption{Twelve heaps of type $I$ for $(1,2)$-Dyck paths and $n=2$.}
\label{fig:HPk2}
\end{figure}
The heaps in the second row in Figure \ref{fig:HPk2} corresponds to 
the following $(1,2)$-Dyck paths:
\begin{align*}
\tikzpic{-0.5}{[scale=0.5]
\draw(0,0)node{$\bullet$}--(1,0)node{$\bullet$}--(1,1)node{$\bullet$}
--(2,1)node{$\bullet$}--(3,1)node{$\bullet$}--(3,2)node{$\bullet$}--(4,2)node{$\bullet$};
}
\quad
\tikzpic{-0.5}{[scale=0.5]
\draw(0,0)node{$\bullet$}--(1,0)node{$\bullet$}--(2,0)node{$\bullet$}
--(2,1)node{$\bullet$}--(3,1)node{$\bullet$}--(3,2)node{$\bullet$}--(4,2)node{$\bullet$};
}
\quad
\tikzpic{-0.5}{[scale=0.5]
\draw(0,0)node{$\bullet$}--(1,0)node{$\bullet$}--(2,0)node{$\bullet$}
--(3,0)node{$\bullet$}--(3,1)node{$\bullet$}--(3,2)node{$\bullet$}--(4,2)node{$\bullet$};
}
\quad
\tikzpic{-0.5}{[scale=0.5]
\draw(0,0)node{$\bullet$}--(0,1)node{$\bullet$}--(1,1)node{$\bullet$}
--(2,1)node{$\bullet$}--(3,1)node{$\bullet$}--(4,1)node{$\bullet$}--(4,2)node{$\bullet$};
}
\end{align*}
The staircases in heaps correspond to peaks in the $(1,2)$-Dyck paths.
The second heap in the second row of Figure \ref{fig:HPk2} has two 
staircases: $[1,3]$ and $[2,5]$.
These two staircases correspond to the peaks at positions $(3,2)$ and $(2,1)$
in the second $(1,2)$-Dyck path.

To consider a generating function of heaps, we introduce a valuation 
on a heap.
\begin{defn}
\label{defn:valH}
Let $H$ be a heap.
We define a valuation $v(H)$ as 
\begin{align*}
v(H)&:=x^{(\text{number of staircases})}y^{\sum(\text{lengths of pieces of $H$})} \\
&\quad\times p^{\sum(\text{left abscissae of staircases of $H$})}
q^{\sum(\text{right abscissae of staircases of $H$})}.
\end{align*}
\end{defn}

With the valuation above, we introduce a generating function.
\begin{defn}
We define an ordinary generating function $F_{(1,b)}(x,y,p,q;n)$ as
\begin{align*}
F_{(1,b)}(x,y,p,q;n)
:=\sum_{H\in\mathcal{H}_{(1,b)}(n)}v(H).
\end{align*}
\end{defn}

For example, we have 
\begin{align*}
F_{(1,2)}(x,y,p,q;1)=1+xypq^3(p+y).
\end{align*}
since the three $(1,2)$-Dyck paths $DDU$, $DUD$ and $UDD$ give the valuations 
$1$, $xy^2pq^3$ and $xyp^2q^3$ respectively.

\subsection{Heaps for \texorpdfstring{$(a,1)$-}{(a,1)}-Dyck paths}
We construct heaps for $(a,1)$-Dyck paths of size $n$.
An $(a,1)$-Dyck path is a path below $(U^{a}D)^{n}$ 
and above $D^{n}U^{an}$.
As in the case of $(1,b)$-Dyck paths, we define a heap 
for the trivial Dyck path $\emptyset$ as the heap without pieces.

Suppose $n\ge1$. We assign positive integers on the border of the 
lowest path as follows.
We assign $l_{i}:=a(n-1-i)+1$ to the point $(i,0)$, and $r_{j}:=an+2-j$ to the 
point $(n,j)$  where $0\le i\le n-1$ and $1\le j\le an$.

Given an $(a,1)$-Dyck path $\nu$, we construct the set of segment $\mathcal{S}(\nu)$
as in the case of $(1,b)$-Dyck paths. 
Namely, we have 
\begin{align*}
\mathcal{S}(\nu):=\{[l_i,r_j] | (i,j)\in\mathtt{Peak}(\nu)\}.
\end{align*}

We take $P=\mathbb{N}_{\ge1}$ and $E$ be the set of dimers.
We impose three conditions on a heap:
\begin{enumerate}[(C1)]
\item The length of a piece is one.
\item The left abscissae of maximal pieces are $1$ modulo $a$.
\item A heap satisfies the condition (B2) and (B3).
\end{enumerate}

Given $\mathcal{S}(\nu)$, one can construct a heap as in the case of $(1,b)$-Dyck paths.
Since a piece is a dimer, any segment can be expressed as a staircase consisting of 
dimers.

\begin{defn}
A heap satisfying the conditions (C1), (C2) and (C3) is called 
a heap of type $I$ for $(a,1)$-Dyck paths.
We denote by $\mathcal{H}_{(a,1)}(n)$ the set of heaps of type $I$ 
for $(a,1)$-Dyck paths.
\end{defn}

For example, we have twelve heaps for $(2,1)$-Dyck paths of size two as in
Figure \ref{fig:HPk2d}.

\begin{figure}[ht]
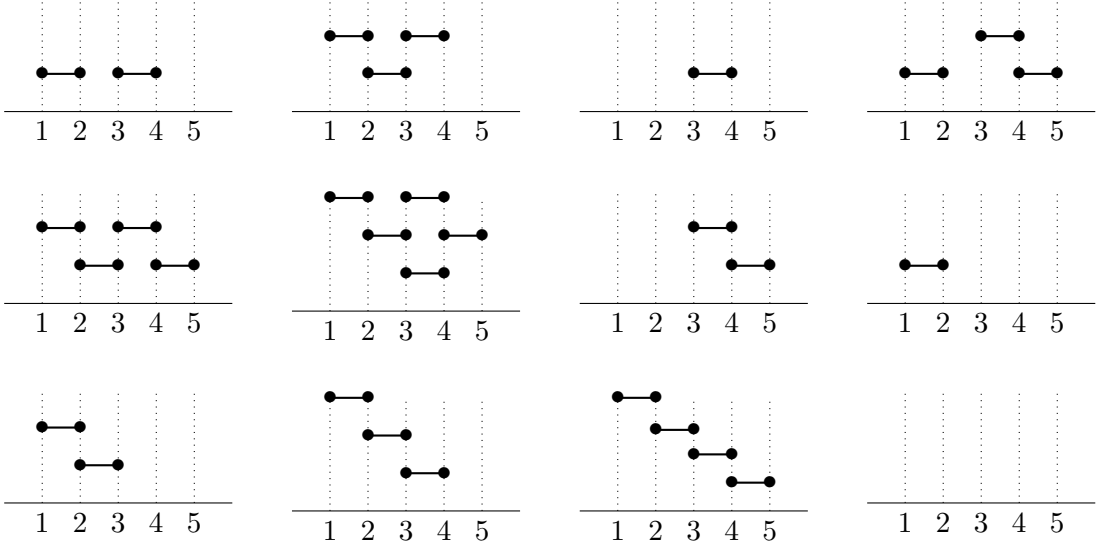

\tikzpic{-0.5}{[scale=0.5]
\draw(0,0)--(6,0);
\foreach \x in {1,2,3,4,5}{
\draw[dotted](\x,0)node[anchor=north]{$\x$}--(\x,3);	
}
\draw[thick](1,1)node{$\bullet$}--(2,1)node{$\bullet$}(3,1)node{$\bullet$}--(4,1)node{$\bullet$};
}\quad 
\tikzpic{-0.5}{[scale=0.5]
\draw(0,0)--(6,0);
\foreach \x in {1,2,3,4,5}{
\draw[dotted](\x,0)node[anchor=north]{$\x$}--(\x,3);	
}
\draw[thick](1,2)node{$\bullet$}--(2,2)node{$\bullet$}(2,1)node{$\bullet$}--(3,1)node{$\bullet$}
(3,2)node{$\bullet$}--(4,2)node{$\bullet$};
}\quad
\tikzpic{-0.5}{[scale=0.5]
\draw(0,0)--(6,0);
\foreach \x in {1,2,3,4,5}{
\draw[dotted](\x,0)node[anchor=north]{$\x$}--(\x,3);	
}
\draw[thick](3,1)node{$\bullet$}--(4,1)node{$\bullet$};
}\quad 
\tikzpic{-0.5}{[scale=0.5]
\draw(0,0)--(6,0);
\foreach \x in {1,2,3,4,5}{
\draw[dotted](\x,0)node[anchor=north]{$\x$}--(\x,3);	
}
\draw[thick](1,1)node{$\bullet$}--(2,1)node{$\bullet$}(3,2)node{$\bullet$}--(4,2)node{$\bullet$}
(4,1)node{$\bullet$}--(5,1)node{$\bullet$};
}\\[11pt]
\tikzpic{-0.5}{[scale=0.5]
\draw(0,0)--(6,0);
\foreach \x in {1,2,3,4,5}{
\draw[dotted](\x,0)node[anchor=north]{$\x$}--(\x,3);	
}
\draw[thick](1,2)node{$\bullet$}--(2,2)node{$\bullet$}(2,1)node{$\bullet$}--(3,1)node{$\bullet$}
(3,2)node{$\bullet$}--(4,2)node{$\bullet$}(4,1)node{$\bullet$}--(5,1)node{$\bullet$};
}\quad 
\tikzpic{-0.5}{[scale=0.5]
\draw(0,0)--(6,0);
\foreach \x in {1,2,3,4,5}{
\draw[dotted](\x,0)node[anchor=north]{$\x$}--(\x,3);	
}
\draw[thick](1,3)node{$\bullet$}--(2,3)node{$\bullet$}(2,2)node{$\bullet$}--(3,2)node{$\bullet$}
(3,1)node{$\bullet$}--(4,1)node{$\bullet$}(3,3)node{$\bullet$}--(4,3)node{$\bullet$}
(4,2)node{$\bullet$}--(5,2)node{$\bullet$};
}\quad 
\tikzpic{-0.5}{[scale=0.5]
\draw(0,0)--(6,0);
\foreach \x in {1,2,3,4,5}{
\draw[dotted](\x,0)node[anchor=north]{$\x$}--(\x,3);	
}
\draw[thick](3,2)node{$\bullet$}--(4,2)node{$\bullet$}(4,1)node{$\bullet$}--(5,1)node{$\bullet$};
}\quad 
\tikzpic{-0.5}{[scale=0.5]
\draw(0,0)--(6,0);
\foreach \x in {1,2,3,4,5}{
\draw[dotted](\x,0)node[anchor=north]{$\x$}--(\x,3);	
}
\draw[thick](1,1)node{$\bullet$}--(2,1)node{$\bullet$};
}\\[11pt]
\tikzpic{-0.5}{[scale=0.5]
\draw(0,0)--(6,0);
\foreach \x in {1,2,3,4,5}{
\draw[dotted](\x,0)node[anchor=north]{$\x$}--(\x,3);	
}
\draw[thick](1,2)node{$\bullet$}--(2,2)node{$\bullet$}(2,1)node{$\bullet$}--(3,1)node{$\bullet$};
}\quad 
\tikzpic{-0.5}{[scale=0.5]
\draw(0,0)--(6,0);
\foreach \x in {1,2,3,4,5}{
\draw[dotted](\x,0)node[anchor=north]{$\x$}--(\x,3);	
}
\draw[thick](1,3)node{$\bullet$}--(2,3)node{$\bullet$}(2,2)node{$\bullet$}--(3,2)node{$\bullet$}
(3,1)node{$\bullet$}--(4,1)node{$\bullet$};
}\quad 
\tikzpic{-0.5}{[scale=0.5]
\draw(0,0)--(6,0);
\foreach \x in {1,2,3,4,5}{
\draw[dotted](\x,0)node[anchor=north]{$\x$}--(\x,3);	
}
\draw[thick](1,3)node{$\bullet$}--(2,3)node{$\bullet$}(2,2.15)node{$\bullet$}--(3,2.15)node{$\bullet$}
(3,1.5)node{$\bullet$}--(4,1.5)node{$\bullet$}(4,0.75)node{$\bullet$}--(5,0.75)node{$\bullet$};
}\quad 
\tikzpic{-0.5}{[scale=0.5]
\draw(0,0)--(6,0);
\foreach \x in {1,2,3,4,5}{
\draw[dotted](\x,0)node[anchor=north]{$\x$}--(\x,3);	
}
}
\caption{Heaps of type $I$ for $(2,1)$-Dyck paths of size $2$.}
\label{fig:HPk2d}
\end{figure}
The $(2,1)$-Dyck paths for the heaps in Figure \ref{fig:HPk2d} 
are the dual of $(1,2)$-Dyck paths for the heaps in Figure \ref{fig:HPk2}.

As in the case of $(1,b)$-Dyck paths, we define a generating 
function for the heaps for $(a,1)$-Dyck paths.
\begin{defn}
We define an ordinary generating function $F_{(a,1)}(x,y,p,q;n)$
as 
\begin{align*}
F_{(a,1)}(x,y,p,q;n):=\sum_{H\in\mathcal{H}_{(a,1)}(n)}v(H),
\end{align*}
where the valuation $v(H)$ is defined in Definition \ref{defn:valH}.
\end{defn}

For example, we have
\begin{align*}
F_{(2,1)}(x,y,p,q;1)=1+xypq^2(1+yq),
\end{align*}
since three $(2,1)$-Dyck paths $DUU$, $UDU$ and $UUD$ give 
the valuations $1$, $xy^2pq^3$ and $xypq^2$ respectively.

\subsection{Duality between heaps of type \texorpdfstring{$I$}{I} for \texorpdfstring{$(1,a)$}{(1,a)}- and 
\texorpdfstring{$(a,1)$}{(a,1)}-Dyck paths}
\label{sec:dual1bb1}

Let $\mathcal{H}$ be a heap for a $(1,a)$-Dyck path of size $n$.
We decompose $\mathcal{H}$ into several staircases $\mathcal{H}_{i}$,
$1\le i\le r$, for some $r\in\mathbb{N}$.
Every staircase $\mathcal{H}_{i}$ can be expressed as the set $[l_{i},r_{i}]$
where $l_{i}$ (resp. $r_{i}$) are the left (resp. right) abscissae of the pieces 
in $\mathcal{H}_{i}$. 
Since the size of a $(1,a)$-Dyck path is $n$, the right-most abscissa of a piece is 
at most $an+1$.

We construct a map $\eta: \mathcal{H}\mapsto\mathcal{H}'$ where $\mathcal{H}'$ 
is a heap for a $(a,1)$-Dyck path. 

Take a right-most staircase $\mathcal{H}_{i}$ such that there are no pieces above $\mathcal{H}_{i}$.
We define a bar operation for a positive integer $p$ as
$\overline{p}:=2+an-p$. 
We place pieces corresponding to the staircase $[\overline{r_{i}},\overline{l}_{i}]$ in $\mathcal{H}'_{i}$.
Then, take a right-most staircase $\mathcal{H}_{j}\in\mathcal{H}\setminus\mathcal{H}_{i}$ such that 
there are no heaps above $\mathcal{H}_{j}$.
We place pieces corresponding to the staircase $[\overline{r_{j}},\overline{l_{j}}]$.
In this way, we remove staircases one-by-one from $\mathcal{H}$ and 
add staircases one-by-one to $\mathcal{H}'$.

To show that the map $\eta$ is well-defined, we have to show that the left abscissae of 
the staircases in $\mathcal{H}'$ take values in $\{1+aj: 0\le j\le n-1\}$.
Recall that the right abscissa $r_{i}$ of the staircase $[l_i,r_i]$ for a heap 
$\mathcal{H}$ takes a value in $\{1+a(n+1-j): 1\le j\le n\}$.
Since the left abscissa of the segment $[\bar{r_{i}},\bar{l_i}]$ in the heap $\mathcal{H}'$
is given by $\bar{r_{i}}$, it is easy to see that $\bar{r_i}$ takes a  value in 
$\{1+aj:0\le j\le n-1\}$. From these, $\eta$ is well-defined.

As a summary, we have the following proposition.
\begin{prop}
The map $\eta$ is a bijection between heaps $\mathcal{H}$ and $\mathcal{H}'$.
\end{prop}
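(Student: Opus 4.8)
The plan is to prove bijectivity by exhibiting an explicit two-sided inverse, exploiting that the bar operation $\overline{p}=2+an-p$ is an \emph{order-reversing involution} on $\{1,\dots,an+1\}$. First I would record its two crucial features: $\overline{\overline{p}}=p$, and $l<r$ implies $\overline{r}<\overline{l}$ with $\overline{l}-\overline{r}=r-l$. Hence each staircase $[l_i,r_i]$ of $\mathcal{H}$ is sent to a genuine segment $[\overline{r_i},\overline{l_i}]$ of the \emph{same total length}. The other preliminary observation I would make is that a heap of type $I$ is completely determined by its set of staircases: condition (B2) forbids $l_i\le l_{i'}<r_{j'}\le r_j$, so whenever $l_i<l_{i'}$ one must have $r_j<r_{j'}$, i.e.\ the left and right abscissae of staircases are comonotone. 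Consequently the canonical piling order (increasing left abscissa) reconstructs the heap from its interval set, and it suffices to analyze $\eta$ at the level of staircase sets, where it acts simply as $[l,r]\mapsto[\overline{r},\overline{l}]$.

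Next I would check that $\eta$ lands among valid heaps of type $I$ for $(a,1)$-Dyck paths. The left-abscissa condition (C2) is the well-definedness already established above. For (C1), each image staircase $[\overline{r_i},\overline{l_i}]$ has length $r_i-l_i$ and so decomposes canonically into $r_i-l_i$ dimers. For (C3): the number of staircases is unchanged, which yields the count in (B3); the bound $1\le\overline{r_i}\le\overline{l_i}\le an+1$ follows from $1\le l_i\le r_i\le an+1$; and non-nesting (B2) is preserved, since a nesting $\overline{r_i}\le\overline{r_{i'}}<\overline{l_{i'}}\le\overline{l_i}$ in the image would, upon applying bar, produce the forbidden nesting $l_i\le l_{i'}<r_{i'}\le r_i$ in $\mathcal{H}$.

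To finish I would define $\eta'$ on $(a,1)$-heaps by the identical recipe with the same bar operation. By the symmetric version of the previous paragraph (the quantized right-abscissa set of $(1,a)$-heaps and the quantized left-abscissa set of $(a,1)$-heaps are exactly exchanged by bar, while the remaining abscissa ranges match), $\eta'$ sends valid $(a,1)$-heaps to valid $(1,a)$-heaps. On staircase sets both maps act by $[l,r]\mapsto[\overline{r},\overline{l}]$, so their composition sends $[l,r]\mapsto[\overline{r},\overline{l}]\mapsto[l,r]$ because $\overline{\overline{p}}=p$; since a heap of type $I$ is determined by its staircase set, this gives $\eta'\circ\eta=\mathrm{id}$ and $\eta\circ\eta'=\mathrm{id}$, and therefore $\eta$ is a bijection.

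The one step that I expect to be the main obstacle is justifying that the \emph{procedural} definition of $\eta$ — repeatedly removing the right-most maximal staircase of $\mathcal{H}$ and appending its barred image to $\mathcal{H}'$ — actually reproduces the canonical type-$I$ heap on the barred interval set, rather than some other stacking with the same intervals. Here I would verify the consistency explicitly: under bar a right-most maximal staircase of $\mathcal{H}$ (large left and right abscissae) becomes a staircase of \emph{smallest} left abscissa in $\mathcal{H}'$, hence is dropped first and settles at the bottom, which is precisely what the canonical increasing-left-abscissa piling demands. Once this order-reversal bookkeeping is pinned down, the set-level involution argument above closes the proof.
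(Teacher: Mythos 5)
Your proof is correct and follows essentially the same route as the paper: both construct the two-sided inverse $\eta'$ by the identical barred-staircase recipe and conclude from the involution $\overline{\overline{p}}=p$ that $\eta'\circ\eta$ and $\eta\circ\eta'$ are identities. The only difference is that you carefully fill in the verifications the paper dismisses as obvious — that the image satisfies (C1)--(C3), that condition (B2) forces comonotone endpoints so a heap is determined by its staircase set, and that the right-most-maximal removal order matches the canonical increasing-left-abscissa piling after barring — all of which check out.
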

%%%%%%%%%%%%
\begin{proof}
It is obvious that the map $\eta$ is injective. 
Note that $\overline{\overline{p}}=p$.
We have a one-to-one correspondence between the two staircases $[l_i,r_{j}]$ 
and $[\overline{r_{i}},\overline{l_{i}}]$.
Suppose that $[\overline{r_{i}},\overline{l_{i}}]$ is a staircase in $\mathcal{H}'$.
One can construct a heap by the map $\eta':\mathcal{H}'\mapsto \mathcal{H}$.
The map $\eta'$ reverses the order of staircases and the positions of a staircase as in 
the case of $\eta$.
Then, $\eta'$ is injective. 
It is obvious that $\eta'\circ\eta:\mathcal{H}\mapsto \mathcal{H}$ and 
$\eta\circ\eta':\mathcal{H}'\mapsto \mathcal{H}'$ are identity.
From these, we have $\eta'=\eta^{-1}$, which implies that $\eta$ is a bijection. 
\end{proof}

\begin{example}
The heaps in Figure \ref{fig:HPk2} are dual to the heaps 
in Figure \ref{fig:HPk2d}.
\end{example}

\begin{example}
\label{ex:1331}
Consider the following $(1,3)$-Dyck path:
\begin{align*}
\tikzpic{-0.5}{[scale=0.5]
\draw(0,0)--(0,1)--(3,1)--(3,2)--(6,2)--(6,3)--(9,3)--(9,4);
\draw[thick](0,0)--(4,0)--(4,1)--(7,1)--(7,2)--(8,2)--(8,3)--(9,3);
\draw[dotted](1,0)--(1,1)(2,0)--(2,1)(3,0)--(3,1)--(4,1)--(4,2)
(5,1)--(5,2)(6,1)--(6,2)--(7,2)--(7,3);
\foreach \x in{1,2,3,4,5,6,7,8,9}{
\draw(9-\x,0)node[anchor=north]{$\x$};
}
\foreach \x/\y in {1/10,2/7,3/4,4/1}{
\draw(9,\x)node[anchor=west]{$\y$};
}
\draw(4,1)node{$\times$}(7,2)node{$\times$}(8,3)node{$\times$};
}
\end{align*}
The heap corresponding to this $(1,3)$-Dyck path has three staircases
$[1,4]$, $[2,7]$ and $[5,10]$.
The heaps are depicted as follows.
\begin{align*}
\tikzpic{-0.5}{[scale=0.5]
\draw(0,0)--(11,0);
\foreach \x in {1,2,...,10}{
\draw[dotted](\x,0)node[anchor=north]{$\x$}--(\x,5);
}
\draw[thick](1,1)node{$\bullet$}--(4,1)node{$\bullet$}(4,2)node{$\bullet$}--(7,2)node{$\bullet$}
(2,3)node{$\bullet$}--(4,3)node{$\bullet$}
(7,3)node{$\bullet$}--(10,3)node{$\bullet$}(5,4)node{$\bullet$}--(7,4)node{$\bullet$};
}
\overset{\eta}{\longleftrightarrow}
\tikzpic{-0.5}{[scale=0.5]
\draw(0,0)--(11,0);
\foreach \x in {1,2,...,10}{
\draw[dotted](\x,0)node[anchor=north]{$\x$}--(\x,5);
}
\draw[thick](1,5)node{$\bullet$}--(2,5)node{$\bullet$}(2,4)node{$\bullet$}--(3,4)node{$\bullet$}
(3,3)node{$\bullet$}--(4,3)node{$\bullet$}(4,2)node{$\bullet$}--(5,2)node{$\bullet$}
(5,1)node{$\bullet$}--(6,1)node{$\bullet$}(4,5)node{$\bullet$}--(5,5)node{$\bullet$}
(5,4)node{$\bullet$}--(6,4)node{$\bullet$}(6,3)node{$\bullet$}--(7,3)node{$\bullet$}
(7,2)node{$\bullet$}--(8,2)node{$\bullet$}(8,1)node{$\bullet$}--(9,1)node{$\bullet$}
(7,4)node{$\bullet$}--(8,4)node{$\bullet$}(8,3)node{$\bullet$}--(9,3)node{$\bullet$}
(9,2)node{$\bullet$}--(10,2)node{$\bullet$};
}
\end{align*}
The left heap corresponds to the $(1,3)$-Dyck path, and the right heap 
corresponds to the dual $(3,1)$-Dyck path.
Note that the size of a piece in the left heap may be one, two, or three. 
On the other hand, the size of a piece in the right heap is exactly one,
and the maximal pieces are at position $1,4$ and $7$.
The map $\eta$ sends the staircases $[1,4]$, $[2,7]$ and $[5,10]$ in the left heap
to the staircases $[7,10]$, $[4,9]$ and $[1,6]$ respectively. 
Note that the order of staircases is reversed. 
\end{example}

The duality of heaps can be translated in terms of generating functions 
as follows.

\begin{prop}
\label{prop:dual1}
Let $M=an+2$.
The generating functions satisfy 
\begin{align}
\label{eq:Fdual}
F_{(1,a)}(x,y,p,q)
=F_{(a,1)}(p^{M}q^{M}x,y,q^{-1},p^{-1}).
\end{align}

\end{prop}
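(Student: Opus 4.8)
The plan is to promote the set-theoretic bijection $\eta$ of the preceding proposition to an identity of generating functions, by checking that $\eta$ is compatible with the valuation of Definition \ref{defn:valH} after the prescribed change of variables. Concretely, writing $\mathcal{H}'=\eta(\mathcal{H})$, I would establish the per-heap identity
\[
v(\mathcal{H})=\left.v(\mathcal{H}')\right|_{x\to p^{M}q^{M}x,\ p\to q^{-1},\ q\to p^{-1}},
\]
and then sum over all $\mathcal{H}\in\mathcal{H}_{(1,a)}(n)$. Since $\eta$ is a bijection onto $\mathcal{H}_{(a,1)}(n)$, the left-hand side sums to $F_{(1,a)}(x,y,p,q)$ while the right-hand side sums to $F_{(a,1)}(p^{M}q^{M}x,y,q^{-1},p^{-1})$, which yields (\ref{eq:Fdual}).

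To carry this out I would record the effect of $\eta$ on each of the four statistics entering the valuation. Recall that $\eta$ sends a staircase $[l_i,r_i]$ of $\mathcal{H}$ to the staircase $[\overline{r_i},\overline{l_i}]=[M-r_i,M-l_i]$ of $\mathcal{H}'$, where $M=an+2$. First, $\eta$ preserves the number $r$ of staircases, so the $x$-exponents agree. Second, the total piece-length is preserved: in $\mathcal{H}$ the pieces of the staircase $[l_i,r_i]$ have sizes summing to $r_i-l_i$, whereas in $\mathcal{H}'$ the staircase $[M-r_i,M-l_i]$ has the same total length $r_i-l_i$ but is built from $r_i-l_i$ dimers, so the sum of its piece-lengths is again $r_i-l_i$; hence the $y$-exponents agree. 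This equality of $y$-exponents across two genuinely different piece structures (variable sizes on the $(1,a)$ side, dimers on the $(a,1)$ side) is the one point that requires explicit verification, and it is the main thing to check.

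Finally I would track the abscissae. Under $\eta$ the left abscissae of $\mathcal{H}'$ sum to $\sum_i(M-r_i)=rM-\sum_i r_i$ and the right abscissae to $\sum_i(M-l_i)=rM-\sum_i l_i$. Thus, writing $v(\mathcal{H})=x^{r}y^{\sum_i(r_i-l_i)}p^{\sum_i l_i}q^{\sum_i r_i}$, the substituted valuation of $\mathcal{H}'$ is
\[
(p^{M}q^{M}x)^{r}\,y^{\sum_i(r_i-l_i)}\,(q^{-1})^{\,rM-\sum_i r_i}\,(p^{-1})^{\,rM-\sum_i l_i}.
\]
The factors $p^{Mr}$ and $q^{Mr}$ coming from the substitution $x\to p^{M}q^{M}x$ cancel exactly against $p^{-rM}$ and $q^{-rM}$ arising from the abscissa sums, leaving $x^{r}y^{\sum_i(r_i-l_i)}p^{\sum_i l_i}q^{\sum_i r_i}=v(\mathcal{H})$, as claimed. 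The only remaining care is to keep the roles of $p$ and $q$ straight under the swap $p\leftrightarrow q^{-1}$, $q\leftrightarrow p^{-1}$ and to confirm the cancellation of the $Mr$ terms; beyond that, everything is immediate from the description of $\eta$.
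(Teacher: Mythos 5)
Your proposal is correct and follows essentially the same route as the paper: the paper also promotes the bijection $\eta$ to a generating-function identity by observing that the valuation factors over staircases as $v([i,j])=xy^{j-i}p^{i}q^{j}$ and checking that under the substitution $(x,y,p,q)\mapsto(p^{M}q^{M}x,y,q^{-1},p^{-1})$ the valuation of the dual segment $[M-j,M-i]$ recovers $v([i,j])$, which forces the exponent $M=an+2$. Your per-heap computation, including the explicit check that the $y$-exponent survives the change from variable-size pieces to dimers, is just the paper's per-staircase calculation summed over all staircases.
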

%%%%%%%%%%%
\begin{proof}

Let $[i,j]$ be a segment in a heap of $\mathcal{H}_{(1,a)}$.
Then, this segment corresponds to a segment $[M-j,M-i]$ in the 
dual heap.
The valuation of $[i,j]$ is given by $xy^{j-i}p^{i}q^{j}$.
We consider the substitution: 
\begin{align*}
(x,y,p,q)\mapsto(p^{\alpha}q^{\alpha}x,y,q^{-1},p^{-1}).
\end{align*} 
Then, by solving the condition that the valuation $v([i,j])$ 
is equal to the valuation $v([M-j,M-i])$,
we obtain $\alpha=M$.
Note that the valuation of a heap is a product of the valuations
of segments.
From these observations, we have Eq. (\ref{eq:Fdual}).
\end{proof}

\begin{example}
Consider the same generalized Dyck path as in Example \ref{ex:1331}.
The weight given to heaps are 
$x^{3}y^{13}p^{8}q^{21}$ and $x^{3}y^{13}p^{12}q^{25}$ from left to right.
Then, we have 
\begin{align*}
(p^{11}q^{11}x)^{3}y^{13}q^{-12}p^{-25}=x^{3}y^{13}p^{8}q^{21}.
\end{align*}
\end{example}

\section{Heaps of type \texorpdfstring{$I$}{I} for lattice paths}
\label{sec:HPLP}
In Section \ref{sec:HGDP}, we introduce heaps for generalized 
Dyck paths.
In this section, we study heaps for lattice paths.
The lattice paths in this section may not be rational Dyck paths.
However, they possess similar properties to $(1,b)$- and $(a,1)$-Dyck 
paths at the same time.
We generalize the construction of heaps given in Section \ref{sec:HGDP}
by keeping its combinatorial structure.

Let $\mu:=\mu_{1}\mu_{2}\ldots\mu_{n}$ be a lattice path of length $n+1$ as a word of $\{U,D\}$ such 
that $\mu_{1}=U$ and $\mu_{n}=D$.
Let $N(U)$ and $N(D)$ be the number of up and right steps in $\mu$ respectively.
Then, the lowest path is given by $D^{N(D)}U^{N(U)}$.
We assign an integer label on the points $(i,0)$ with $0\le i\le N(D)-1$ and 
on the points $(N(D),j)$ with $1\le j\le N(U)$ as follows.
\begin{enumerate}[(D1)]
\item A label for the point $(i,0)$. 
Let $h(i)$ be the integer such that 
we have a right step connecting $(i,h(i))$ with $(i+1,h(i))$.
Then, we define an integer sequence  
$\mathfrak{s}':=(s'_0,s'_1,\ldots,s'_{N(D)-1})$ 
by $s'_{i}=N(U)+1-h(i)$.
By definition, we have $s'_{N(D)-1}=1$.
The sequence $\mathfrak{s}'$ is weakly decreasing from left to right.
We recursively construct a new integer sequence $\mathfrak{s}$ from $\mathfrak{s}'$.
\begin{enumerate}
\item Set $i=N(D)-2$.
\item If $s'_{i}\neq s'_{i+1}$, go to (c).
If $s'_{i}=s'_{i+1}$, then we increase all $s'_{j}$ with $j\le i$ by one, and go to (c)
\item If $i\neq 0$, decrease $i$ by one and go to (b). The algorithm stops when $i=0$.
\end{enumerate}
If we write the new integer sequence $\mathfrak{s}:=(s_0,s_1,\ldots, s_{N(D)-1})$, 
the label on the point $(i,0)$ is $s_{i}$.
\item A label for the point $(N(D),j)$. 
Suppose we have a peak in $\mu$ whose coordinate is $(i,j)$.
Let $J(\mu)$ be the set of $y$-coordinate of peaks.
Then, the label of the point $(N(D),j)$, $j\in J(\mu)$, is defined 
to be $s_{i}+1$.
For $j'\notin J(\mu)$, the label of the point $(N(D),j')$ is 
one plus the label of the point $(N(D),j'+1)$.
\end{enumerate}
We denote by $x(i)$ the label on $(i,0)$ and by $y(j)$ the label on $(N(D),j)$.

\begin{figure}[ht]
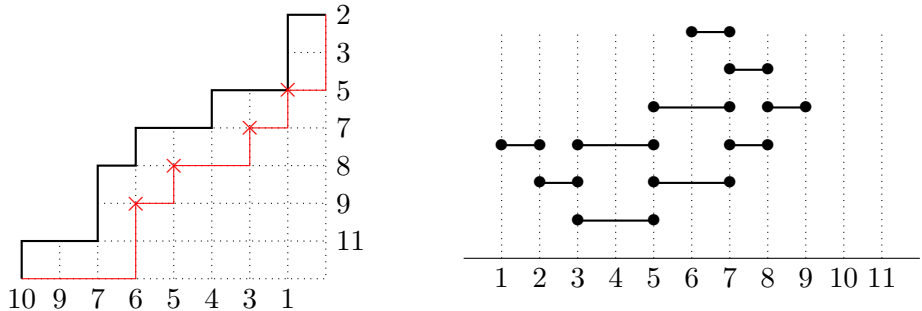

\tikzpic{-0.5}{[scale=0.5]
\draw[thick](0,0)--(0,1)--(2,1)--(2,3)--(3,3)--(3,4)--(5,4)--(5,5)--(7,5)--(7,7)--(8,7);
\draw[dotted](0,0)--(8,0)--(8,7)(1,0)--(1,1)(2,0)--(2,1)--(8,1)(2,2)--(8,2)
(3,0)--(3,3)--(8,3)(4,0)--(4,4)(5,0)--(5,4)--(8,4)(6,0)--(6,5)(7,0)--(7,5)--(8,5)(7,6)--(8,6);
\foreach \x/\y in {0/10,1/9,2/7,3/6,4/5,5/4,6/3,7/1}{
\draw(\x,0)node[anchor=north]{$\y$};
}
\foreach \x/\y in {1/11,2/9,3/8,4/7,5/5,6/3,7/2}{
\draw(8,\x)node[anchor=west]{$\y$};
}
\draw[red](0,0)--(3,0)--(3,2)node{$\times$}--(4,2)--(4,3)node{$\times$}--(6,3)--(6,4)node{$\times$}--(7,4)--
(7,5)node{$\times$}--(8,5)--(8,7);
}\qquad
\tikzpic{-0.5}{[scale=0.5]
\draw(0,0)--(12,0);
\foreach \x in {1,2,...,11}{
\draw(\x,0)node[anchor=north]{$\x$};
\draw[dotted](\x,0)--(\x,6);
}
\draw[thick](3,1)node{$\bullet$}--(5,1)node{$\bullet$}
(2,2)node{$\bullet$}--(3,2)node{$\bullet$}(1,3)node{$\bullet$}--(2,3)node{$\bullet$}
(5,2)node{$\bullet$}--(7,2)node{$\bullet$}(3,3)node{$\bullet$}--(5,3)node{$\bullet$}
(7,3)node{$\bullet$}--(8,3)node{$\bullet$}(5,4)node{$\bullet$}--(7,4)node{$\bullet$}
(8,4)node{$\bullet$}--(9,4)node{$\bullet$}(7,5)node{$\bullet$}--(8,5)node{$\bullet$}
(6,6)node{$\bullet$}--(7,6)node{$\bullet$};
}
\caption{A heap for the lattice path $D^3U^2DUD^2UDUDU^2$ below the path $UD^2U^2DUD^2UD^2U^2D$}
\label{fig:exlabel}
\end{figure}

We impose the following conditions on pieces:
\begin{enumerate}[(E1)]
\item A staircase $[l_i,r_i]$ satisfies $l_i=x(j)$ and $r_i=y(k)$ for some $j$ and $k$.
\item Let $d(j):=\#\{i\le j | s'_{i}=s'_{j} \}$ where $s'_{i}$ is defined in (D1). 
Then, the size of a piece starting from $x(i)$ is $d(j)$.
\item A heap satisfies the condition (B2).
\end{enumerate}

\begin{defn}
We call a heap satisfying (E1), (E2) and (E3) a heap of type $I$ for a lattice path $\mu$.
We denote by $\mathcal{H}^{I}(\mu)$ the set of heaps of type $I$ for $\mu$.
\end{defn}

When $\mu$ is a generalized Dyck path, a $(1,b)$-Dyck path or an $(a,1)$-Dyck path, 
a heap of type $I$ for $\mu$ is nothing but a heap of type $I$ for the generalized Dyck path 
considered in Section \ref{sec:HGDP}.

\begin{example}
Figure \ref{fig:exlabel} shows an example of a lattice path and its heap.
We have four staircases $[1,5]=[1,2]\circ[2,3]\circ[3,5]$, $[3,7]=[3,5]\circ[5,7]$, 
$[5,8]=[5,7]\circ[7,8]$ and $[6,9]=[6,7]\circ[7,8]\circ[8,9]$ which correspond 
to peaks.
Note that pieces starting from the third and fifth positions have size two.
There are no staircases starting from the second and eighth positions.
This is because we have no labels with two and eight in $\{y(j)\}$.
\end{example}

Let $\mu:=\mu_1\ldots\mu_{n}$ be a path expressed as a word of $\{U,D\}$.
We define $\overline{U}:=D$ and $\overline{D}:=U$.  
We define a dual path $\overline{\mu}:=\overline{\mu_{n}}\overline{\mu_{n-1}}\ldots\overline{\mu_1}$.
Let $\mathcal{P}(\mu)$ be the set of lattice paths below $\mu$.

Let $M(\mu)$ be the maximum label $\max\{y(j)\}$ for the lattice path $\mu$.

\begin{lemma}
\label{lemma:M}
We have $M(\mu)=M(\overline{\mu})$.
\end{lemma}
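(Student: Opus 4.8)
The plan is to reduce Lemma \ref{lemma:M} to a closed formula for $M(\mu)$ that depends on $\mu$ only through two manifestly dual-invariant quantities. Concretely, I will show
\begin{align*}
M(\mu)=N(U)+N(D)+1-|\mathtt{Peak}(\mu)|,
\end{align*}
that is, the maximal right label equals the number of steps of $\mu$, plus one, minus the number of peaks. Granting this, the lemma is immediate: the dual $\overline{\mu}$ is obtained by reversing the word and exchanging $U$ and $D$, so $N(U)$ and $N(D)$ are interchanged while their sum (the length of $\mu$) is preserved; and since the reverse-complement of the factor $UD$ is again $UD$, the map $\mu\mapsto\overline{\mu}$ induces a bijection between the occurrences of $UD$ in $\mu$ and in $\overline{\mu}$, whence $|\mathtt{Peak}(\mu)|=|\mathtt{Peak}(\overline{\mu})|$. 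Thus both inputs of the formula are unchanged by dualization and $M(\mu)=M(\overline{\mu})$.

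First I would locate the maximum. Writing the peaks of $\mu$ as $(i_1,j_1),\ldots,(i_P,j_P)$ in the order they occur along the path, both the abscissae $i_k$ and the heights $j_k$ are increasing; the lowest peak is $(0,a)$, where $a$ is the number of leading up steps, and there is no peak of height below $a$. Since a peak label $y(j_k)=s_{i_k}+1$ decreases as $k$ grows (because $s$ is strictly decreasing in its index and higher peaks lie further to the right), and since every non-peak label is obtained from the label just above it by adding one, a short estimate shows that $s_{i_k}+j_k$ is weakly decreasing in $k$ and dominates every non-peak label. Hence the maximum is attained at the bottom: $M(\mu)=y(1)=s_{i_1}+j_1=s_0+a$.

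The heart of the argument is the analysis of the recursive construction of $s$ from $s'$. I would first record that $s'_i=N(U)+1-h(i)$ gives $s'_{i_k}-s'_{i_{k+1}}=j_{k+1}-j_k$ for consecutive peaks, and that the algorithm fires an increment exactly at those indices $i$ with $s'_i=s'_{i+1}$ in the \emph{original} sequence. The key point is that each firing adds one uniformly to an entire prefix, so that at the moment index $i$ is compared, the values $s'_i$ and $s'_{i+1}$ have received the same number of earlier increments; consequently the comparison detects precisely the original ties. An equality $s'_i=s'_{i+1}$ means $h(i)=h(i+1)$, i.e. two consecutive right steps at the same height, i.e. a factor $DD$; counting these as $N(D)$ minus the number of maximal $D$-runs, and noting that (since $\mu_1=U$) each $D$-run is opened by exactly one peak, yields $N(D)-|\mathtt{Peak}(\mu)|$ increments applied to position $0$. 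Therefore $s_0=s'_0+\bigl(N(D)-|\mathtt{Peak}(\mu)|\bigr)=\bigl(N(U)+1-a\bigr)+\bigl(N(D)-|\mathtt{Peak}(\mu)|\bigr)$, and substituting into $M(\mu)=s_0+a$ gives the claimed formula. The non-negativity of these prefix increments also supplies the monotonicity of $s_{i_k}+j_k$ used above.

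The main obstacle I anticipate is the bookkeeping of the $s'\to s$ algorithm: one must argue carefully that the dynamic increments never change which comparisons come out equal, so that the total number of increments to position $0$ is genuinely the static count of $DD$-factors, and then combine this with the relation $s'_{i_k}-s'_{i_{k+1}}=j_{k+1}-j_k$ to certify that the largest label sits at $j=1$. Once the formula $M(\mu)=N(U)+N(D)+1-|\mathtt{Peak}(\mu)|$ is established, the dual invariance is essentially formal.
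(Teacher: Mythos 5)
Your proof is correct and follows essentially the same route as the paper: both establish a closed, manifestly self-dual formula for the maximum label --- your $M(\mu)=N(U)+N(D)+1-|\mathtt{Peak}(\mu)|$ is identical to the paper's $M(\mu)=\sum_{i=1}^{n}(a_i+b_i)-(n-1)$, since the number $n$ of runs equals the number of peaks and the number of valleys is $n-1$ --- and then observe that dualization preserves the step counts and the peak (equivalently valley) count. The only difference is one of detail: you spell out the bookkeeping of the $\mathfrak{s}'\to\mathfrak{s}$ algorithm (prefix increments never change which comparisons tie, ties occur exactly at $DD$-factors, and the block maxima $s_{i_k}+j_k-j_{k-1}$ peak at $k=1$) that the paper compresses into ``we compute $M(\mu)$ by following its definition.''
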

%%%%%%%%%%%
\begin{proof}
Let $\mu$ be a lattice path of the form $U^{b_{n}}D^{a_{n}}U^{b_{n-1}}D^{a_{n-1}}\ldots U^{b_{1}}D^{a_{1}}$.
We compute $M(\mu)$ by following its definition.
The number of valleys in $\mu$, i.e., the number of pattern $DU$, is $n-1$.
The maximum label $\max\{x(i)\}$ is given by 
$A=\sum_{i=1}^{n}a_{i}+\sum_{j=1}^{n-1}b_{n}-(n-2)$.
The label $B$ at the point $(\sum_{i=1}^{n}a_{n},b_{n})$ is given by $B=A+1$ by definition.
Then, the maximum label $M(\mu)$ is given by
$B+b_{n}-1$, which implies that $M(\mu)=\sum_{i=1}^{n}(a_{i}+b_{i})-(n-1)$.

If we take a dual of the path $\mu$, the roles of $a_{i}$ and $b_{i}$ are exchanged and 
the number of valleys is the same as before.
Therefore, we have $M(\mu)=M(\overline{\mu})$.
\end{proof}

\begin{remark}
In Section \ref{sec:HGDP}, we introduce a map $\eta$ to describe the duality between 
$(1,b)$-Dyck paths and $(b,1)$-Dyck paths.
This $\eta$ can be easily generalized to describe the duality between the paths in 
$\mathcal{P}(\mu)$ and $\mathcal{P}(\overline{\mu})$.
Namely, we decompose a heap into staircases, then construct a heap by reversing 
the order and the coordinates of the staircases.
\end{remark}

Recall that $\mathcal{H}^{I}(\mu)$ is the set of heaps for the lattice path $\mu$.
Then, we define the ordinary generating function by 
\begin{align*}
F_{\mu}(x,y,p,q):=\sum_{H\in\mathcal{H}^{I}(\mu)}v(H),
\end{align*}
where the valuation $v(H)$ is defined in Definition \ref{defn:valH}.

By the same proof as in Proposition \ref{prop:dual1}, we obtain 
the duality in terms of generating functions as follows.
\begin{prop}
Let $M=1+\max(y(j))$. Then, we have
\begin{align*}
F_{\mu}(x,y,p,q)=F_{\overline{\mu}}(p^{M}q^{M}x,y,q^{-1},p^{-1}).
\end{align*}
\end{prop}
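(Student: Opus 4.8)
The plan is to mimic the proof of Proposition \ref{prop:dual1}, replacing the explicit constant $M=an+2$ by the general value $M=1+\max(y(j))$ and invoking the generalized duality map $\eta$ described in the Remark. First I would recall that $\eta$ decomposes a heap $H\in\mathcal{H}^{I}(\mu)$ into staircases and produces $\eta(H)\in\mathcal{H}^{I}(\overline{\mu})$ by reversing both the order of the staircases and their coordinates through the bar operation $\overline{p}:=M-p$. Concretely, a staircase written as the segment $[i,j]$ in $H$ is sent to the segment $[\overline{j},\overline{i}]=[M-j,M-i]$ in $\eta(H)$.

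Before computing valuations, I would record why $M=1+\max(y(j))$ is the correct normalization. By Lemma \ref{lemma:M} we have $M(\mu)=M(\overline{\mu})$, so the bar operation is symmetric between $\mu$ and $\overline{\mu}$; this guarantees that $\eta$ carries the label set of $\mu$ onto that of $\overline{\mu}$ and is an involution up to the order reversal. In the special case of $(1,a)$-Dyck paths this reduces to $M=an+2$, recovering exactly the bar operation of Section \ref{sec:HGDP}.

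The core of the argument is valuation bookkeeping on a single staircase. By Definition \ref{defn:valH}, the valuation of the segment $[i,j]$ is $v([i,j])=x\,y^{j-i}p^{i}q^{j}$, while the image segment $[M-j,M-i]$ carries the valuation $x\,y^{j-i}p^{M-j}q^{M-i}$. I would then apply the substitution $(x,y,p,q)\mapsto(p^{M}q^{M}x,y,q^{-1},p^{-1})$ to the latter and verify that
\begin{align*}
(p^{M}q^{M}x)\,y^{j-i}(q^{-1})^{M-j}(p^{-1})^{M-i}
=x\,y^{j-i}p^{i}q^{j}=v([i,j]).
\end{align*}
Since the valuation of a heap is the product of the valuations of its staircases and $\eta$ acts staircase-by-staircase, this identity extends multiplicatively, giving that $v(H)$ equals the substituted valuation of $\eta(H)$ for every $H$. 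Summing over $\mathcal{H}^{I}(\mu)$ and using that $\eta$ is a bijection onto $\mathcal{H}^{I}(\overline{\mu})$ then yields the claimed equality of generating functions.

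The main obstacle is not the valuation algebra, which is a direct generalization of the computation in Proposition \ref{prop:dual1}, but rather confirming that the generalized $\eta$ genuinely respects the type $I$ conditions (E1)--(E3) on both sides and sends $[i,j]$ to $[M-j,M-i]$ with precisely this value of $M$. This is exactly where Lemma \ref{lemma:M} is essential, and I would lean on the description of $\eta$ already supplied in the Remark rather than reconstruct it from scratch.
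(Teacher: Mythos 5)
Your proposal is correct and matches the paper's argument, which literally proves this proposition ``by the same proof as in Proposition \ref{prop:dual1}'': the generalized map $\eta$ from the Remark sends a staircase $[i,j]$ to $[M-j,M-i]$, and the substitution $(x,y,p,q)\mapsto(p^{M}q^{M}x,y,q^{-1},p^{-1})$ fixes each staircase valuation $xy^{j-i}p^{i}q^{j}$, so multiplicativity and bijectivity give the identity. Your additional check that Lemma \ref{lemma:M} makes the normalization $M=1+\max(y(j))$ symmetric between $\mu$ and $\overline{\mu}$ is exactly the compatibility the paper notes immediately after the proposition.
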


Note that the duality is compatible with Lemma \ref{lemma:M}

\section{Heaps of type \texorpdfstring{$II$}{II} for lattice paths and generating functions}
\label{sec:LPGF}
\subsection{Heaps and generating functions}
\label{subsec:HGFII}
Let $\mu$ be a lattice path. 
Let $a_{i}$, $1\le i\le N(U)$, be the number of 
$D$ steps at height $N(U)+1-i$.
We define $a_{N(U)+1}:=1$.
An integer sequence $\mathfrak{a}(\mu):=(a_1,a_2\ldots,a_r)$
is defined from the numbers $a_{i}$ where $r=N(U)+1$.
We define the length $l(\mathfrak{a})$ of $\mathfrak{a}$ as $r$.
For example, the path $UD^2U^2DUD^2UD^2U^2D$ in Figure \ref{fig:exlabel} 
gives the sequence $\mathfrak{a}=(1,0,2,2,1,0,2,1)$.

We consider heaps $H$ satisfying the following conditions:
\begin{enumerate}[(F1)]
\item $H$ has $l(\mathfrak{a})=r$ pieces from top to bottom, 
and the length of $r$-th piece from top is $a_{r}$.
\item $H$ has a unique maximal piece whose left abscissa is one.
\end{enumerate}
Note that a piece of length zero corresponds to a monomer.
Formally, we consider heaps on the set $P=\mathbb{N}_{\ge1}$.
The concurrency relation $\mathcal{R}$ is defined by  
$a\mathcal{R}b$ if and only if $a\cap b\neq\emptyset$.
Let $E$ be the set of segments of the form $\alpha=[a,b]$ with 
$1\le a\le b$.  The map $\epsilon:E\rightarrow P$ is given by
$\epsilon(\alpha)=[a,b]$.

\begin{defn}
We call a heap satisfying the conditions (F1) and (F2) a heap of type $II$.
We define $\mathcal{H}^{II}(\mathfrak{a})$ as the set of heaps of type $II$.
\end{defn}

Let $\mu_{0}$ be a lattice path and $\mu$ be a lattice path below $\mu_{0}$.
We construct a heap of type $II$ from the pair $(\mu_{0},\mu)$ as follows.
Recall that the length $r$ of $\mathfrak{a}(\mu_{0})$ is one plus the number of 
up steps $U$ in $\mu_{0}$. From the condition (F1), we have $r$ pieces from top
to bottom.
By the condition (F2), the position of the unique 
maximal piece is uniquely fixed. 
From these, it is enough to fix the positions of the $r-1$ pieces in a heap.

Let $\mathbf{b}(\mu):=(b_1(\mu),\ldots,b_{r-1}(\mu))$ be a sequence of integers 
such that $b_{i}$ is the number of right steps left to the $i$-th up step
in $\mu$.
For example, we have $\mathbf{b}=(0,2,2)$ for the path $UD^2U^2D$.
Let $\mathbf{c}:=(c_1,\ldots,c_{r-1})$ be a sequence of integers 
given by 
$c_{i}=b_{r-i}(\mu)-b_{r-i}(\mu_{0})+1$ for $1\le i\le r-1$.
Then, we give the left abscissa of the $i+1$-th piece from the top by 
$c_{i}$ for $1\le i\le r-1$.

\begin{example}
Let $\mu_0=UD^2U^2D$ and $\mu=DUDUDU$.
By definitions, we have $\mathfrak{a}(\mu_{0})=(1,0,2,1)$, 
$\mathbf{b}(\mu_{0})=(0,2,2)$ and $\mathbf{b}(\mu)=(1,2,3)$.
From these we have $\mathbf{c}=(2,1,2)$.
From $\mathfrak{a}(\mu_{0})$ and $\mathbf{c}$, we have 
the following heap:
\begin{align*}
\tikzpic{-0.5}{[scale=0.5]
\draw(0,0)--(4,0);
\foreach \x in {1,2,3}{
\draw(\x,0)node[anchor=north]{$\x$};
}
\draw[thick](2,1)node{$\bullet$}--(3,1)node{$\bullet$}
(1,2)node{$\bullet$}--(3,2)node{$\bullet$}(2,3)node{$\bullet$}
(1,4)node{$\bullet$}--(2,4)node{$\bullet$};
\draw[dotted](1,0)--(1,4)(2,0)--(2,4)(3,0)--(3,4);
}
\end{align*} 
The lengths of the pieces from top to bottom are given by $\mathfrak{a}(\mu_{0})$,
and the left abscissae of the pieces are given by $\mathbf{c}$.
\end{example}

We have a heap constructed from the pair of lattice paths $(\mu_0,\mu)$ via
the sequences of integers $\mathfrak{a}(\mu_0)$ and $\mathbf{c}$.
We show that the heap is actually of type $II$.
First, by construction, the condition (F1) is satisfied.
Secondly, to show that this construction is admissible, we show that a heap constructed 
from $\mathfrak{a}(\mu_0)$ and $\mathbf{c}$ satisfies the condition (F2).
The condition (F2) is equivalent to the following condition:
\begin{align}
\label{eq:condac}
a_{i}+c_{i-1}\ge c_{i},\quad 1\le i\le r-1,
\end{align}
where $c_{0}=1$.
This condition can be obtained from the condition that the $i$-th piece is not 
a maximal piece in a heap of type $II$.
This condition is equivalent to the condition that the left abscissa of the $i$-th 
piece is weakly left to the right abscissa of the $i-1$-th piece.
However, the condition (\ref{eq:condac}) is satisfied by $\mathfrak{a}(\mu_{0})$ 
and $\mathbf{c}$ since the path $\mu$ is below the path $\mu_{0}$.
From these, the construction of heaps of type $II$ is well-defined.

Given a heap $H\in\mathcal{H}^{II}(\mathfrak{a})$, we define the valuation $w(H)$
by 
\begin{align*}
w(H):=p^{\sum(\text{left abscissae of pieces of $H$})}.
\end{align*}
Then, we define an ordinary generating function $G(\mathfrak{a})$ by 
\begin{align*}
G(\mathfrak{a}):=\sum_{H\in\mathcal{H}^{II}(\mathfrak{a})}w(H).
\end{align*}
Note that $\mathcal{H}^{II}(\mathfrak{a})$ is the set of heaps
characterized by the sequence $\mathfrak{a}$.
The number of pieces is $r$, and the length of a piece is 
given by $a_{i}$, $1\le i\le r$.
Since the number of pieces and the length of a piece are fixed by $\mathfrak{a}$,
the valuation $w(H)$ looks simple compared to the valuations for heaps of 
type $I$.

Below, we introduce the Lindstr\"om--Gessel--Viennot lemma \cite{GesVie85,Lin73} and 
give a determinant expression of $G(\mathfrak{a})$ as an application.

Let $X=\{x_1,\ldots,x_{n}\}$ and $Y=\{y_1,\ldots,y_{n}\}$ be the sets of 
vertices. Given two vertices $x$ and $y$, we denote 
by $w(x,y)$ the sum of the weights given to paths from $x$ to $y$.
If we assign the weight one to each path, the sum $w(x,y)$ counts the number of 
paths from $x$ to $y$.

A $n$-tuple of non-intersecting paths from $X$ to $Y$ is an $n$-tuple $(P_1,\ldots,P_n)$ 
such that 
\begin{enumerate}[(G1)]
\item There exists a permutation $\rho$ of $[n]$ such that $P_{i}$ is a path from $x_{i}$ to $y_{\rho(i)}$.
\item If $i\neq j$, the paths $P_i$ and $P_{j}$ do not occupy the same vertex.
\end{enumerate}
Let $\mathcal{P}:=(P_1,\ldots, P_n)$ and $\rho(P)$ be the permutation satisfying the condition (G1).
We denote by $w(P_{i}):=w(x_{i},y_{\rho(i)})$ the weight given to the path $P_{i}$ from $x_i$ to $y_{\rho(i)}$.

\begin{lemma}[\cite{GesVie85,Lin73}]
\label{lemma:LGV}
The weighted sum of non-intersecting paths from $X$ to $Y$ is given by 
the following determinant: 
\begin{align*}
\det\left[w(x_i,y_j)\right]_{1\le i,j\le n}=\sum_{\mathcal{P}:X\rightarrow Y}
\mathrm{sign}(\rho(P))\prod_{i=1}^{n}w(P_{i}).
\end{align*}
\end{lemma}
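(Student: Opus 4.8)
The statement to prove is the Lindström--Gessel--Viennot lemma (Lemma \ref{lemma:LGV}), which expresses the signed weighted sum over non-intersecting path systems as a determinant. This is a classical result, so my plan is to reconstruct the standard sign-reversing involution proof.

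The plan is to start by expanding the determinant $\det[w(x_i,y_j)]$ by its definition as a signed sum over permutations $\rho$ of $[n]$. Since $w(x_i,y_j)$ is the sum of weights over all paths from $x_i$ to $y_j$, and the weight of a path is multiplicative, the product $\prod_{i=1}^n w(x_i,y_{\rho(i)})$ expands into a sum over all $n$-tuples of paths $\mathcal{P}=(P_1,\dots,P_n)$ where $P_i$ runs from $x_i$ to $y_{\rho(i)}$, with weight $\mathrm{sign}(\rho)\prod_i w(P_i)$. Thus
\begin{align*}
\det\left[w(x_i,y_j)\right]_{1\le i,j\le n}
=\sum_{\mathcal{P}:X\rightarrow Y}\mathrm{sign}(\rho(\mathcal{P}))\prod_{i=1}^{n}w(P_i),
\end{align*}
where now $\mathcal{P}$ ranges over \emph{all} path tuples (intersecting or not), and $\rho(\mathcal{P})$ records which endpoint each source is connected to. The goal is then to show that the contributions of the \emph{intersecting} tuples cancel in pairs, leaving exactly the non-intersecting tuples, which is the right-hand side.

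The key step, and the main obstacle to state carefully, is the construction of a sign-reversing weight-preserving involution $\Phi$ on the set of intersecting path tuples. First I would fix a total order on the vertices and specify a canonical rule to pick a pair of paths that intersect: given an intersecting tuple, let $i$ be the smallest index such that $P_i$ meets some other path, and among all vertices at which $P_i$ meets a later path, pick the first such vertex $v$ along $P_i$; let $j>i$ be the smallest index of a path passing through $v$. I would then define $\Phi$ to swap the tails of $P_i$ and $P_j$ after the common vertex $v$, producing new paths $P_i'$ (from $x_i$ to $y_{\rho(j)}$) and $P_j'$ (from $x_j$ to $y_{\rho(i)}$) while leaving the other paths unchanged. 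The tail-swap preserves the multiset of edges used, hence $\prod_i w(P_i)$ is unchanged; and it composes the permutation with the transposition $(i\,j)$, so the sign flips. The delicate points I expect to be the crux are verifying that $\Phi$ is a genuine involution (the canonical choice of swap vertex must be recovered after applying $\Phi$, so the selection rule must depend only on data invariant under the swap) and that $\Phi$ maps intersecting tuples to intersecting tuples.

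Finally I would conclude: since $\Phi$ is a fixed-point-free, sign-reversing, weight-preserving involution on intersecting tuples, those terms cancel in the sum, so only non-intersecting tuples survive. For a non-intersecting tuple the associated permutation $\rho(\mathcal{P})$ equals the permutation $\rho(P)$ of condition (G1), and its contribution is $\mathrm{sign}(\rho(P))\prod_i w(P_i)$, yielding exactly the right-hand side of the claimed identity. The whole argument is purely formal once the involution is set up, so I would devote most of the writeup to pinning down the selection rule and checking involutivity, treating the determinant expansion and the final cancellation as routine.
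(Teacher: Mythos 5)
The paper does not prove this lemma at all: it is quoted directly from the cited sources \cite{GesVie85,Lin73}, so there is no internal proof to compare against. Your proposal is a correct reconstruction of exactly the classical argument from those references --- expanding the determinant over all (possibly intersecting) path tuples and cancelling the intersecting ones via the weight-preserving, sign-reversing tail-swap involution --- and your selection rule (minimal index $i$, first intersection vertex $v$ along $P_i$, minimal later index $j$ through $v$) is indeed invariant under the swap, which is the one point that needs care; note only that recovering $v$ after the swap also uses the fact that the paths are self-avoiding, which holds automatically in the monotone lattice-path setting of the paper.
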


Given a sequence $\mathfrak{a}$, we define lattice points $s_{i}$ and $e_{i}$ for 
$1\le i\le r$ by 
\begin{align}
\label{eq:coordse}
s_{i}:=(-(i-1)-\sum_{p\le i}a_{p},i-1), \quad e_{i}:=(-(i-1),i).
\end{align}

Suppose that we have a sequence of up and right steps $(t_1,\ldots,t_{l})$ from 
$s_{i}$ to $e_{j}$.
Here, $l$ is the number of steps from $s_i$ to $e_j$.
We assign a weight for a each step $p^{-(k-1)}$ if $t_{k}$ is an up step and $1$ 
if $t_{k}$ is a right step. The weight of a path from $s_i$ to $e_{j}$ is 
the product of weights given to each step.
More precisely, we define the weight to a path from $s_{i}$ to $e_{j}$ by 
\begin{align*}
w(s_{i}\rightarrow e_{j})
:=\begin{cases}
\displaystyle p^{-((j-i)(j-i+1)/2)\delta(j>i)}\genfrac{[}{]}{0pt}{}{1+\sum_{p\le i}a_{p}}{j-i+1}_{p^{-1}}, 
& \text{ if  } i-1\le j\le i+\sum_{p<i}a_{p}, \\
0, & \text{otherwise},
\end{cases}
\end{align*}
where $\delta$ is the Kronecker delta function, and 
$\displaystyle\genfrac{[}{]}{0pt}{}{n}{m}_{p}:=\genfrac{}{}{}{}{[n]_{p}!}{[m]_{p}![n-m]_{p}!}$ with 
$[n]_{p}:=\sum_{i=0}^{n}p^{i}$.
Note that, in each path, we allow only up and right steps from $s_i$ to $e_{j}$.

By applying the Lindstr\"om--Gessel--Viennot lemma \ref{lemma:LGV}, we obtain
\begin{prop}
\label{prop:LGV}
We have 
\begin{align}
\label{eq:GdetLGV}
G(\mathfrak{a})=p^{D(\mathfrak{a})}\det\left[w(s_{i}\rightarrow e_{j})\right]_{1\le i,j\le r-1},
\end{align}
where $D(\mathfrak{a}):=\sum_{j=1}^{r-1}(r-j)a_{j}$.
\end{prop}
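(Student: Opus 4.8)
The plan is to realize $G(\mathfrak{a})$ as a weighted count of non-intersecting lattice paths and then to apply the Lindström--Gessel--Viennot lemma (Lemma \ref{lemma:LGV}), reading off the determinant from the single-path weights.

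First I would make the left-hand side explicit. A heap in $\mathcal{H}^{II}(\mathfrak{a})$ is determined by the left abscissae $(c_0,c_1,\dots,c_{r-1})$ of its $r$ pieces, with $c_0=1$ fixed by (F2) and the remaining abscissae constrained exactly by the admissibility condition (\ref{eq:condac}), namely $1\le c_i\le c_{i-1}+a_i$. Since $w(H)=p^{\sum_i c_i}$, this yields
\[
G(\mathfrak{a})=\sum_{\substack{c_1,\dots,c_{r-1}\ge 1\\ c_i\le c_{i-1}+a_i}}p^{\,1+\sum_{i=1}^{r-1}c_i},\qquad c_0:=1 .
\]

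Next I would build the path model. To the $i$-th interior level I attach a lattice path $P_i$ from $s_i$ to $e_i$ with coordinates as in (\ref{eq:coordse}); since $e_i$ lies one unit of height above $s_i$, each $P_i$ has a single up step, whose horizontal placement inside the window of width $\sum_{p\le i}a_p$ records the abscissa $c_i$. With the valuation assigning $p^{-(k-1)}$ to an up step in position $k$, the product $\prod_i w(P_i)$ reproduces $p^{\sum_i c_i}$ after multiplication by a fixed power of $p$; tracking that power is exactly where $D(\mathfrak{a})=\sum_{j=1}^{r-1}(r-j)a_j$ enters, converting the $p^{-1}$-grading natural to the path model into the $p$-grading of the heap valuation. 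The decisive structural point is that the inequality $c_i\le c_{i-1}+a_i$ translates into the statement that $P_i$ does not overtake its lower neighbour $P_{i-1}$, so that admissible heaps are in weight-preserving bijection (up to the global factor $p^{D(\mathfrak{a})}$) with non-intersecting families $(P_1,\dots,P_{r-1})$, $P_i\colon s_i\to e_i$.

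With the bijection established, I would invoke Lemma \ref{lemma:LGV}: the determinant equals the signed sum over all families, and one must show that it collapses to the generating function of the non-intersecting families realizing the identity matching $P_i\colon s_i\to e_i$. Here the range restriction built into $w(s_i\to e_j)$ does the work: forcing the weight to vanish once $j$ exceeds $i+\sum_{p<i}a_p$ is the path-level shadow of ``$P_i$ may not pass $P_{i-1}$'', and it is what kills (or cancels) the contributions of the non-identity matchings. It then remains to compute each surviving entry. A path from $s_i$ to $e_j$ uses $j-i+1$ up steps among a total of $1+\sum_{p\le i}a_p$ steps, and summing $\prod p^{-(k-1)}$ over all interleavings of up and right steps is the familiar identity expressing the Gaussian binomial as an area generating function; it produces $\genfrac{[}{]}{0pt}{}{1+\sum_{p\le i}a_p}{j-i+1}_{p^{-1}}$ times the minimal exponent $p^{-(j-i)(j-i+1)/2}$, matching the stated $w(s_i\to e_j)$. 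Substituting into the LGV determinant and restoring the prefactor $p^{D(\mathfrak{a})}$ gives (\ref{eq:GdetLGV}).

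I expect the main obstacle to be the second and third steps together: exhibiting the bijection so that non-intersection coincides with (\ref{eq:condac}) exactly, while simultaneously verifying that the cumulative power of $p$ assembled from the single up steps is precisely $p^{D(\mathfrak{a})}$. A subsidiary but real difficulty is the permutation analysis --- confirming that the range truncation of $w$ reduces the signed LGV sum to the identity matching --- because the matrix carries a full subdiagonal coming from the purely horizontal (zero-up-step) paths.
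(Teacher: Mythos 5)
Your route is the paper's route: encode a heap by the left abscissae of its non-maximal pieces, realize these as the positions of the unique up steps in single-rise paths $P_i\colon s_i\to e_i$, observe that vertex-disjointness of consecutive paths is exactly the admissibility condition (\ref{eq:condac}), apply Lemma \ref{lemma:LGV}, and restore the prefactor $p^{D(\mathfrak{a})}$. Two of your steps are in fact \emph{more} explicit than the paper's: the paper takes the closed form $p^{-(j-i)(j-i+1)/2}\genfrac{[}{]}{0pt}{}{1+\sum_{p\le i}a_p}{j-i+1}_{p^{-1}}$ as the definition of $w(s_i\to e_j)$ rather than deriving it from interleavings as you do, and it records the equivalence between non-intersection and (\ref{eq:condac}) only through an example. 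Your handling of the $p^{D(\mathfrak{a})}$ normalization (identifying each path's exponent with a row-wise box count between $\mu$ and $\mu'$, without expanding the arithmetic) is at precisely the paper's own level of detail, so there is no discrepancy there.

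The genuine gap is the step you flag but leave open, and the mechanism you anticipate for it would fail. You expect the range truncation built into $w(s_i\to e_j)$ to kill the non-identity matchings, but it cannot: as you yourself note, the matrix has a nonzero subdiagonal, $w(s_i\to e_{i-1})=1$ (the purely horizontal paths), and generically many nonzero entries with $j>i$, so the Leibniz expansion of $\det\left[w(s_i\to e_j)\right]$ contains plenty of nonzero non-identity terms; no vanishing of entries eliminates them individually. What eliminates them is the LGV involution combined with the geometry of the endpoints, and this is exactly the argument the paper supplies: if $\rho\neq\mathrm{id}$, take $j$ minimal with $P_j$ not ending at $e_j$; then $e_j$ is the endpoint of some $P_{i'}$ with $i'>j$, and since paths use only up and right steps while the $s_i$ and $e_i$ ascend along diagonals ($s_i$ at height $i-1$, $e_i$ at height $i$), the path $P_{i'}$ is forced down to height $j$ and its row-$j$ segment covers the row-$j$ vertices of $P_j$, so the two paths share a vertex. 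Hence every family realizing a non-identity permutation is intersecting and cancels in the signed sum of Lemma \ref{lemma:LGV}, leaving the determinant equal to the weighted count of non-intersecting identity families, i.e.\ of heaps, which yields (\ref{eq:GdetLGV}). Replacing your truncation heuristic by this crossing argument is the one missing idea; the rest of your proposal matches the paper's proof.
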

%%%%%%%%%%%
\begin{proof}
Given two sets of points $\{s_i\}$ and $\{e_{i}\}$, we denote an $n$-tuple of 
paths by $(P_1,\ldots,P_n)$.
Let $j$ be the minimum integer such that $P_{j}$ is not a path from $s_j$ to 
$e_{j}$.
Since each $s_{i}$ is connected to $e_{\rho(i)}$, there exists $i'$ such that 
$P_{i'}$ is a path from $s_{i'}$ to $e_{j}$.
By the minimal property of $j$, we have $i'>j$.
By the $y$-coordinate of $s_{i}$ is strictly larger than that of $e_{i}$, 
the paths $P_i$ and $P_{i'}$ intersect.
This implies that the right hand sight of Lemma \ref{lemma:LGV} can be 
reduced to the sum of $n$-tuple paths whose permutation $\rho(P)$ is 
the identity on $[n]$.

Since $\rho(P)$ is the identity, we consider only paths $P_i$ 
from $s_{i}$ to $e_{i}$.
The weight $w(s_{i}\rightarrow e_{i})$ is given by $[n']$ with some 
non-negative integer $n'$.
Given an integer $n''\in\{0,1,\ldots,n'-1\}$, we have a weight $p^{-n''}$ 
for a path from $s_{i}$ to $e_{i}$.

Recall that a lattice path $\mu$ is characterized by $\mathfrak{a}$.
Let $\mu'$ be a lattice path below $\mu$.
Then, the number of unit boxes above $\mu'$ and below $\mu$ 
is counted by the exponent of $p$. 

Combining these two observations, 
the integer $n''$ corresponds to the number of unit boxes in the 
$i$-th row below $\mu$ and above $\mu'$.
The expression (\ref{eq:GdetLGV}) follows.
\end{proof}

\begin{example}
We consider the path $\mu=UDU^2D^2$ and $\mu'=DUDUDU$.
\begin{figure}[ht]
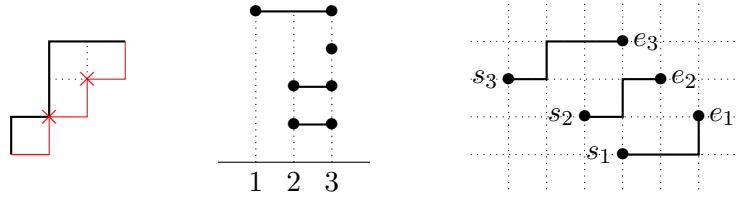

\tikzpic{-0.5}{[scale=0.5]
\draw[thick](0,0)--(0,1)--(1,1)--(1,3)--(3,3);
\draw[red](0,0)--(1,0)--(1,1)node{$\times$}--(2,1)--(2,2)node{$\times$}--(3,2)--(3,3);
\draw[dotted](1,2)--(2,2)--(2,3);
}\qquad
\tikzpic{-0.5}{[scale=0.5]
\draw(0,0)--(4,0);
\foreach \x in {1,2,3}{
\draw(\x,0)node[anchor=north]{$\x$};
}
\draw[thick](2,1)node{$\bullet$}--(3,1)node{$\bullet$}
(2,2)node{$\bullet$}--(3,2)node{$\bullet$}(3,3)node{$\bullet$}
(1,4)node{$\bullet$}--(3,4)node{$\bullet$};
\draw[dotted](1,0)--(1,4)(2,0)--(2,4)(3,0)--(3,4);
}\qquad
\tikzpic{-0.5}{[scale=0.5]
\draw(0,1)node{$\bullet$}node[anchor=west]{$e_{1}$};
\draw(-1,2)node{$\bullet$}node[anchor=west]{$e_{2}$};
\draw(-2,3)node{$\bullet$}node[anchor=west]{$e_{3}$};
\draw(-2,0)node{$\bullet$}node[anchor=east]{$s_{1}$};
\draw(-3,1)node{$\bullet$}node[anchor=east]{$s_{2}$};
\draw(-5,2)node{$\bullet$}node[anchor=east]{$s_{3}$};
\draw[thick](-2,0)--(0,0)--(0,1)(-3,1)--(-2,1)--(-2,2)--(-1,2)(-5,2)--(-4,2)--(-4,3)--(-2,3);
\foreach \x in{-5,...,0}{
\draw[dotted](\x,4)--(\x,-1);
}
\foreach \x in{0,...,3}{
\draw[dotted](-6,\x)--(1,\x);
}
}
\caption{The correspondence among the lattice path, the heap, and the non-intersecting paths}
\label{fig:corLGV}
\end{figure}
In Figure \ref{fig:corLGV}, we summarize the correspondence among the lattice path $\mu'$, 
the heap for $\mu'$, and non-intersecting lattice paths.
In the left picture, the numbers of unit boxes below $\mu$ and above $\mu'$ is 
$2, 1$, and $1$ from top to bottom.
In the middle picture, the numbers of empty sites left to pieces are $2$, $1$, $1$ 
from top to bottom.
In the right picture, the numbers of right steps in $s_{i}$ to $e_{i}$ which is 
left to the unique up step are $1, 1$ and $2$ from top to bottom.
\end{example}

Below, we give two applications of the lattice path method considered above to
the generating functions of subheaps. 

Let $\widetilde{G_{k}}(\mathfrak{a})$ be the generating function of heaps such that 
the right abscissa of the bottom piece is $k$ where $k\ge a_{r}$.
By definition, we have $G(\mathfrak{a})=\sum_{k}\widetilde{G_{k}}(\mathfrak{a})$.
The next proposition gives a recursive formula for $\widetilde{G_k}(\mathfrak{a})$.
\begin{prop}
The generating function $\widetilde{G_{k}}(\mathfrak{a})$ satisfies 
\begin{align}
\label{eq:Gtilrec}
\widetilde{G_{k}}(a_1,\ldots,a_{r})
=p\cdot\widetilde{G_{k-1}}(a_1,\ldots, a_r)-p^{k-a_{r}}\widetilde{G_{k-1-a_r}}(a_1,\ldots,a_{r-1}),
\end{align}
with initial conditions $\widetilde{G_{n}}(\mathfrak{a})=0$ for $n\le 0$, 
and $\widetilde{G_{m}}(\mathfrak{a})=p^{r+a_1+a_2+\ldots+a_{r-1}}$ for $m=1+\sum_{1\le i\le r} a_{i}$.
\end{prop}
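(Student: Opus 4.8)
The plan is to prove the recursion by peeling off the bottom (lowest) piece of a heap of type $II$, which reduces $\widetilde{G_{k}}(\mathfrak{a})$ to a partial sum of the generating functions for the shortened sequence $\mathfrak{a}':=(a_{1},\ldots,a_{r-1})$, and then to telescope that partial sum. I encode a heap $H\in\mathcal{H}^{II}(\mathfrak{a})$ by the left abscissae $\ell_{1}=1,\ell_{2},\ldots,\ell_{r}$ of its pieces from top to bottom, so that the $i$-th piece is the segment $[\ell_{i},\ell_{i}+a_{i}]$ and $w(H)=p^{\ell_{1}+\cdots+\ell_{r}}$. If the bottom piece has right abscissa $k$, then $\ell_{r}=k-a_{r}$, contributing the factor $p^{k-a_{r}}$ to $w(H)$.

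The first and main step is the peeling bijection (for $r\ge2$; the case $r=1$ is the single piece with right abscissa $1+a_{1}$ and is immediate). Deleting the bottom piece of $H$ leaves pieces $1,\ldots,r-1$, whose mutual constraints are exactly condition (F2), i.e. \eqref{eq:condac} for $1\le i\le r-2$; these are unaffected by the deletion, so the remaining pieces form a heap of type $II$ for $\mathfrak{a}'$ with top piece still at abscissa $1$. Conversely, a heap of type $II$ for $\mathfrak{a}'$ whose bottom piece has right abscissa $k'=\ell_{r-1}+a_{r-1}$ admits a new bottom piece of right abscissa $k$ precisely when $\ell_{r}=k-a_{r}\ge1$ and the overlap condition \eqref{eq:condac} with $i=r-1$ holds, namely $k-a_{r}\le\ell_{r-1}+a_{r-1}=k'$. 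Summing the weight $p^{k-a_{r}}$ of the appended piece over all admissible sub-heaps yields
\begin{align}
\label{eq:Gtilpartial}
\widetilde{G_{k}}(\mathfrak{a})=p^{k-a_{r}}\sum_{k'\ge k-a_{r}}\widetilde{G_{k'}}(a_{1},\ldots,a_{r-1}).
\end{align}

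The recursion \eqref{eq:Gtilrec} then drops out by telescoping \eqref{eq:Gtilpartial}. Replacing $k$ by $k-1$ in \eqref{eq:Gtilpartial} and multiplying by $p$ gives $p\,\widetilde{G_{k-1}}(\mathfrak{a})=p^{k-a_{r}}\sum_{k'\ge k-1-a_{r}}\widetilde{G_{k'}}(\mathfrak{a}')$; subtracting \eqref{eq:Gtilpartial} cancels all terms except the single one at $k'=k-1-a_{r}$, so $p\,\widetilde{G_{k-1}}(\mathfrak{a})-\widetilde{G_{k}}(\mathfrak{a})=p^{k-a_{r}}\widetilde{G_{k-1-a_{r}}}(a_{1},\ldots,a_{r-1})$, which rearranges to \eqref{eq:Gtilrec}.

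It remains to verify the boundary data by inspecting the extreme heaps. Since every left abscissa is a positive integer and every length is nonnegative, the right abscissa $\ell_{r}+a_{r}$ of the bottom piece is at least $1$; hence there is no admissible heap when $n\le0$, giving $\widetilde{G_{n}}(\mathfrak{a})=0$. For the top value $m=1+\sum_{i}a_{i}$, the inequalities \eqref{eq:condac} telescope to $\ell_{r}\le1+\sum_{i<r}a_{i}$, so the bottom right abscissa equals $m$ only when every inequality is an equality; this isolates the unique fully right-justified heap $\ell_{i}=1+\sum_{j<i}a_{j}$, whose weight $p^{\sum_{i}\ell_{i}}$ is read off directly to give the stated initial value. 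The one genuinely delicate point is the bookkeeping in the peeling step: checking that the range ``$k'\ge k-a_{r}$'' in \eqref{eq:Gtilpartial} is precisely the image of the overlap condition \eqref{eq:condac} and that the surviving top $r-1$ pieces remain a valid heap of type $II$. Once \eqref{eq:Gtilpartial} is secured, the remainder is a one-line telescoping.
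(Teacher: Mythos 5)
Your main argument is the paper's own proof in expanded form: the paper states precisely your partial-sum identity as Eq.~(\ref{eq:Gtilde}), justified by the same observation that the left abscissa of the bottom piece lies weakly to the left of the right abscissa of the $(r-1)$-st piece, and then obtains (\ref{eq:Gtilrec}) by subtracting $p\cdot\widetilde{G_{k-1}}(\mathfrak{a})$ --- your telescoping step verbatim. The peeling bijection you spell out (delete/append the bottom piece, with admissibility $\ell_{r}\ge 1$ and the overlap inequality (\ref{eq:condac}) for $i=r-1$) is exactly the bookkeeping the paper compresses into one sentence, and that part is correct.

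There is, however, a genuine flaw in your verification of the second initial condition: you assert an equality that is false. You correctly isolate the unique right-justified heap $\ell_{i}=1+\sum_{j<i}a_{j}$, but its weight is $p^{\sum_{i}\ell_{i}}=p^{\,r+\sum_{j=1}^{r-1}(r-j)a_{j}}$, which is \emph{not} the stated $p^{\,r+a_{1}+\cdots+a_{r-1}}$ unless $r\le 2$ or $a_{j}=0$ for all $j\le r-2$. Concretely, for $\mathfrak{a}=(2,2,2)$ the unique heap with bottom right abscissa $m=7$ consists of the pieces $[1,3]$, $[3,5]$, $[5,7]$ (the last heap in Figure~\ref{fig:k2}) and has weight $p^{1+3+5}=p^{9}$, whereas the stated formula gives $p^{3+2+2}=p^{7}$; iterating your own partial-sum identity also forces $\widetilde{G_{7}}(2,2,2)=p^{5}\cdot p^{4}=p^{9}$. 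So the correct initial value is $p^{\,r+D(\mathfrak{a})}$ with $D(\mathfrak{a})=\sum_{j=1}^{r-1}(r-j)a_{j}$ as in Proposition~\ref{prop:LGV}, the proposition's stated exponent is a typo, and your computation actually exposes it --- but your sentence claiming the weight ``is read off directly to give the stated initial value'' papers over the mismatch instead of checking it, so as written your proof asserts a false identity. Everything else (the recursion, the vanishing $\widetilde{G_{n}}=0$ for $n\le 0$, the uniqueness of the extremal heap) is sound and coincides with the paper, whose proof dismisses the initial conditions as ``obvious.''
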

%%%%%%%%%
\begin{proof}
By definition of a heap, the left abscissa of the $r$-th piece from top 
is weakly left to the right abscissa of the $r-1$-th piece.
This implies 
\begin{align}
\label{eq:Gtilde}
\widetilde{G_{k}}(\mathfrak{a})=p^{k-a_{r}}\sum_{j\ge k-a_r}\widetilde{G_{j}}(a_{1},\ldots,a_{r-1}).
\end{align}
By subtracting $p\cdot\widetilde{G_{k-1}}(\mathfrak{a})$ from Eq. (\ref{eq:Gtilde}), 
we obtain Eq. (\ref{eq:Gtilrec}).
The initial conditions are obvious.
\end{proof}

We give an expression of $\widetilde{G_{k}}$ in terms of weighted sum of 
$r-1$-tuple non-intersecting lattice paths.
\begin{defn}
We define $\widetilde{\mathcal{P}}(\{s_{i}\},\{e_{i}\}; M \})$ as the weighted sum of 
non-intersecting lattice paths from $\{s_{i}\}$ to $\{e_{i}\}$ for $1\le i\le r-1$ 
such that the points in $M$ are passed through by a $r-1$-th path. 
\end{defn}

\begin{prop}
Let $\{s_i\}$ and $\{e_{i}\}$ be the coordinates as in Eq. (\ref{eq:coordse}), 
and $M$ be the set of two points:
\begin{align*}
M=\{s_{r-1}+(k-1,0),s_{r-1}+(k-1,1)\}.
\end{align*}
Then, the generating function $\widetilde{G_{k}}(\mathfrak{a})$ is given by
\begin{align*}
\widetilde{G_{k}}(\mathfrak{a})=p^{D(\mathfrak{a})}\widetilde{\mathcal{P}}(\{s_{i}\},\{e_{i}\}; M \}).
\end{align*}
\end{prop}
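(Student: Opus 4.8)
The plan is to prove the statement as a \emph{refinement} of Proposition \ref{prop:LGV}, rather than to redo the Lindstr\"om--Gessel--Viennot computation from scratch. Recall that the proof of Proposition \ref{prop:LGV} produces a weight-preserving correspondence, up to the global factor $p^{D(\mathfrak{a})}$, between heaps of type $II$ characterized by $\mathfrak{a}$ and families of non-intersecting lattice paths from $\{s_i\}$ to $\{e_i\}$, $1\le i\le r-1$, all of which necessarily realize the identity permutation. In that correspondence each path $s_i\to e_i$ carries a single up step (from height $i-1$ to height $i$), and the number of right steps preceding that up step records the number of empty cells to the left of one prescribed piece of the heap. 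The whole idea is to restrict this correspondence to the subclass of heaps singled out by the extra condition ``the right abscissa of the bottom piece equals $k$'', and to identify the image of that subclass inside $\widetilde{\mathcal{P}}(\{s_i\},\{e_i\};M)$.

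First I would pin down which path governs the bottom (i.e.\ $r$-th from the top) piece. Tracing the bijection of Proposition \ref{prop:LGV} on the example of Figure \ref{fig:corLGV}, and keeping track of the reversal between the top-to-bottom order of the pieces and the increasing-$y$ order of the paths, one finds that the bottom piece corresponds precisely to the $(r-1)$-th path. Concretely, the left abscissa of the bottom piece is one more than the number of right steps preceding the up step of that path, and its right abscissa is obtained by adding $a_r$. This is exactly the data encoded by Eq.~(\ref{eq:Gtilde}): deleting the bottom piece lowers $\mathfrak{a}$ to $(a_1,\dots,a_{r-1})$, and the factor $p^{k-a_r}$ there is the contribution of the bottom piece, whose left abscissa equals $k-a_r$.

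Next I would translate ``the up step of the $(r-1)$-th path sits at a prescribed horizontal position'' into a passage condition. Since the $(r-1)$-th path has exactly one up step, from height $r-2$ to height $r-1$, prescribing its horizontal location is the same as demanding that the path traverse the vertical segment joining the two points of $M$, that is, that it pass through both points of $M=\{s_{r-1}+(k-1,0),\,s_{r-1}+(k-1,1)\}$. Because this requirement constrains only the $(r-1)$-th path and alters neither the weight of any step nor the prefactor $p^{D(\mathfrak{a})}$, summing the restricted bijection gives $\widetilde{G_k}(\mathfrak{a})=p^{D(\mathfrak{a})}\,\widetilde{\mathcal{P}}(\{s_i\},\{e_i\};M)$. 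As an independent check, one can verify that both sides satisfy the recursion (\ref{eq:Gtilrec}) with the same initial conditions, which would give a second, purely inductive proof.

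The hard part will be the bookkeeping in the second and third steps: matching the horizontal offset exactly. One must simultaneously account for the reversal between the ordering of the pieces and of the paths, for the difference $a_r$ between the left and right abscissae of the bottom piece, and for the shift by one between ``number of preceding right steps'' and ``left abscissa''. Getting this offset exactly right is what fixes the first coordinate of the two points in $M$, and it is the only genuinely delicate point; once the forced up step is correctly located, the conclusion is a direct specialization of Proposition \ref{prop:LGV}.
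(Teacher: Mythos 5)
Your proposal is correct and follows essentially the same route as the paper's own proof: the paper likewise invokes the heap--path correspondence from Proposition \ref{prop:LGV}, identifies the bottom ($r$-th) piece with the $(r-1)$-th non-intersecting path, and observes that passing through both points of $M$ forces the unique up step of that path to lie on the edge joining them, which is exactly the condition that the right abscissa of the bottom piece equals $k$. Your concluding suggestion to cross-check both sides against the recursion (\ref{eq:Gtilrec}) is an additional verification the paper does not carry out, but the core argument is identical.
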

%%%%%%%%%%
\begin{proof}
As in the proof of Proposition \ref{prop:LGV}, we have a correspondence 
between a lattice path and $r$-tuple of non-intersecting paths.
Suppose that $r-1$-th non-intersecting path passes through the edge consisting of two points in $M$.
Then, this configuration of the path implies that the right abscissa of the $r$-th piece in a heap
is $k$.
This means that the number $\widetilde{\mathcal{P}}(\{s_{i}\},\{e_{i}\}; M \})$ is nothing but 
the number $\widetilde{G_{k}}$ of heaps.
This completes the proof. 
\end{proof}

The previous proposition can be easily generalized as follows.
\begin{prop} 
Let $I$ be the subset of $[1,r-1]$, and 
suppose that  the right abscissa of $i$-th piece of a heap $H$ is $k_{i}$
for $i\in I$.
The generating function of heaps $H$ satisfying the above conditions 
is given by $p^{D(\mathfrak{a})}\widetilde{\mathcal{P}}(\{s_{i}\},\{e_{i}\}; M' \})$
where 
\begin{align*}
M':=\{s_{i}+(k_{i}-a_{i+1},0),s_{i}+(k_{i}-a_{i+1},1) | i\in I \}.
\end{align*}
\end{prop}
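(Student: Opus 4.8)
The plan is to generalize the proof of the previous proposition, which handled the single-index case $I=\{r-1\}$, to an arbitrary subset $I\subseteq[1,r-1]$. The core correspondence is already in place: by the proof of Proposition \ref{prop:LGV}, a heap $H\in\mathcal{H}^{II}(\mathfrak{a})$ corresponds, after stripping the overall prefactor $p^{D(\mathfrak{a})}$, to an $(r-1)$-tuple of non-intersecting lattice paths from $\{s_i\}$ to $\{e_i\}$, where the number of right steps in the $i$-th path lying to the left of its unique up step records the number of empty sites to the left of the $i$-th piece. So the whole task reduces to translating the condition ``the right abscissa of the $i$-th piece equals $k_i$'' into a condition on the $i$-th non-intersecting path, for each $i\in I$ simultaneously.

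First I would recall the dictionary between a piece and its path. The $i$-th piece has length $a_{i+1}$ (indexing as in (F1)), so its right abscissa is its left abscissa plus $a_{i+1}$; equivalently, if the $i$-th piece has left abscissa $c$, then the number of right steps preceding the up step in the $i$-th path $P_i$ is $c-1$, and the up step sits at the lattice column $s_i+(c-1,0)\to s_i+(c-1,1)$. The condition that the right abscissa equals $k_i$ is the condition that the left abscissa equals $k_i-a_{i+1}$, i.e.\ that $P_i$ makes its up step along the edge joining $s_i+(k_i-a_{i+1},0)$ and $s_i+(k_i-a_{i+1},1)$. This is exactly the pair of points placed in $M'$. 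Thus each membership $i\in I$ pins down one up-edge of the corresponding path, and $M'$ collects precisely these pinned edges.

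Next I would verify that the single-index formula of the previous proposition is recovered: for $i=r-1$ we have $a_{r}$ as the length of the bottom piece, and $M'$ specializes to $\{s_{r-1}+(k-a_r,0),s_{r-1}+(k-a_r,1)\}$. This matches the earlier $M=\{s_{r-1}+(k-1,0),s_{r-1}+(k-1,1)\}$ after the reindexing that identifies the earlier parameter; checking this consistency is a good sanity check on the offset $k_i-a_{i+1}$ in the definition of $M'$. With the geometric translation fixed, the argument is then identical to before: the reduction to the identity permutation (forced by non-intersection, since the $y$-coordinate of each $s_i$ strictly exceeds that of $e_i$) is unaffected by imposing edge constraints, and $\widetilde{\mathcal{P}}(\{s_i\},\{e_i\};M')$ counts exactly those heaps whose $i$-th pieces have the prescribed right abscissae $k_i$ for all $i\in I$. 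Multiplying by $p^{D(\mathfrak{a})}$ restores the valuation, giving the stated generating function.

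The main obstacle, such as it is, is purely bookkeeping rather than conceptual: I must confirm that imposing constraints on several paths at once is consistent, namely that forcing up-edges on a subset of the paths does not interfere with the bijection's passage to the identity permutation. This holds because the constraints are \emph{per-path}—each fixed edge lies on its own $P_i$ and the non-intersection reduction to $\rho=\mathrm{id}$ is a structural consequence of the vertical ordering of sources and sinks, independent of which up-edges are selected. Hence the constraints simply restrict the set of admissible identity-permutation tuples, and the weighted count over that restricted set is precisely $\widetilde{\mathcal{P}}(\{s_i\},\{e_i\};M')$. The only care needed is to ensure each prescribed edge is actually reachable, i.e.\ that $k_i$ lies in the admissible range for the $i$-th piece; outside that range both sides vanish, consistent with the convention that $w(s_i\to e_j)=0$ when $j$ is out of range.
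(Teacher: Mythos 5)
Your overall strategy coincides with the paper's: the paper gives no separate proof of this proposition, presenting it only as an easy generalization of the preceding one, and the proof of that preceding proposition is exactly your argument --- the correspondence from Proposition \ref{prop:LGV} between heaps and $(r-1)$-tuples of non-intersecting paths, the translation of each right-abscissa condition into a forced up-edge on the corresponding path $P_i$, and the observation that such per-path edge constraints do not disturb the reduction to the identity permutation forced by non-intersection.

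There is, however, a concrete bookkeeping problem in your write-up that you should not wave through, because in a statement whose entire content is the set $M'$ of forced points, the offset \emph{is} the content. By your own dictionary, a piece with left abscissa $c$ forces the up step of $P_i$ along the column $s_i+(c-1,0)\to s_i+(c-1,1)$; with $c=k_i-a_{i+1}$ this is the edge at horizontal offset $k_i-a_{i+1}-1$, \emph{not} the edge joining $s_i+(k_i-a_{i+1},0)$ and $s_i+(k_i-a_{i+1},1)$ --- your two consecutive sentences contradict each other by one unit, and you silently adopt the printed offset in the conclusion. Relatedly, your ``sanity check'' fails as stated: at $i=r-1$ the set $M'$ has offset $k-a_r$, while the earlier proposition's $M$ has offset $k-1$, and these agree only when $a_r=1$; no reindexing reconciles them, because the two printed propositions are themselves mutually inconsistent on this point (the worked example after Proposition \ref{prop:LGV}, in which the number of right steps preceding the unique up step equals the left abscissa minus one, in fact supports the offset $k_i-a_{i+1}-1$ rather than either printed version). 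So the idea of your proof is the intended one, but as written it does not establish the displayed $M'$: a careful version must fix one convention for the position of the forced up-edge, derive $M'$ from it, and explicitly note the resulting discrepancy with the earlier set $M$, instead of asserting a match that does not hold.
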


We consider another generating functions of heaps.
Let $G_{k}(\mathfrak{a})$ be the generating function of heaps 
such that the maximal right abscissa of pieces is $k$.
By definition, we have $G(\mathfrak{a})=\sum_{k}G_{k}(\mathfrak{a})$.

The generating function $G_{k}(\mathfrak{a})$ can be expressed 
as an enumeration of lattice paths.
\begin{defn}
We define $\mathcal{P}(\{s_{i}\},\{e_{i}\};\{m_{i}\})$ as the weighted sum of 
non-intersecting lattice paths from $\{s_{i}\}$ to $\{e_{i}\}$ such that 
the $i$-th path from $s_i$ does not pass through the point $m_{i}$ for 
all $1\le i\le r-1$. 
\end{defn}

We define the points $\{m_{i}(k)\}$ for a given $k\ge\max\{a_i:1\le i\le r-1\}$ by 
\begin{align}
\label{eq:defm}
m_{i}(k):=s_i+(k-a_{i+1},0),
\end{align}
for $1\le i\le r-1$.

\begin{prop}
Let $\{s_i\}$ and $\{e_{i}\}$ be the coordinates as in Eq. (\ref{eq:coordse}), 
and $\{m_{i}(k)\}$ be defined as in Eq. (\ref{eq:defm}).
Then,
\begin{align}
\label{eq:GkPm}
G_{k}(\mathfrak{a})=p^{D(\mathfrak{a})}\bigg(
\mathcal{P}(\{s_{i}\},\{e_{i}\};\{m_{i}(k)\})-\mathcal{P}(\{s_{i}\},\{e_{i}\};\{m_{i}(k-1)\})
\bigg),
\end{align}
where we define $\mathcal{P}(\{s_{i}\},\{e_{i}\};\{m_{i}(k)\}):=0$ if $k<\max\{a_{i}:1\le i\le r-1\}$.
\end{prop}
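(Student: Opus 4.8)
The plan is to interpret $p^{D(\mathfrak{a})}\mathcal{P}(\{s_i\},\{e_i\};\{m_i(k)\})$ as the generating function of those heaps of type $II$ whose maximal right abscissa is at most $k$, and then to recover $G_k(\mathfrak{a})$, which records heaps whose maximal right abscissa is \emph{exactly} $k$, by a one-step inclusion--exclusion. Thus the first thing I would prove is
\[
p^{D(\mathfrak{a})}\,\mathcal{P}(\{s_i\},\{e_i\};\{m_i(k)\})=\sum_{H}w(H),
\]
where $H$ ranges over all heaps in $\mathcal{H}^{II}(\mathfrak{a})$ with maximal right abscissa $\le k$. Granting this, the heaps with maximal right abscissa $\le k$ minus those with maximal right abscissa $\le k-1$ leave exactly the heaps with maximal right abscissa equal to $k$, which is the content of (\ref{eq:GkPm}).

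For the displayed identity I would reuse the correspondence set up in the proof of Proposition~\ref{prop:LGV}: a heap of type $II$ for $\mathfrak{a}$ is matched with an $(r-1)$-tuple of non-intersecting lattice paths from $\{s_i\}$ to $\{e_i\}$, and the configuration of the $s_i$ and $e_i$ forces the connecting permutation to be the identity, so that the $i$-th path runs from $s_i$ to $e_i$ and carries a single up step. As in that proof, the horizontal offset of this up step equals the number of empty cells to the left of the $(i+1)$-st piece, and hence determines its left abscissa and its right abscissa. The propositions established just above record (in the language of $\widetilde{\mathcal{P}}$) that the vertical edge lying above $m_i(k)=s_i+(k-a_{i+1},0)$ is traversed precisely when the right abscissa of the $(i+1)$-st piece is the value pinned by $k$. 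Because each path is monotone and its unique up step lies in a single row, the path meets the bottom vertex $m_i(k)$ if and only if the up step lies weakly to the right of that column; therefore avoiding $m_i(k)$ is equivalent to the right abscissa of the $(i+1)$-st piece being at most $k$. Since $\mathcal{P}(\{s_i\},\{e_i\};\{m_i(k)\})$ demands this avoidance for every $i\in[1,r-1]$ at once, every non-maximal piece, and hence the whole heap, has right abscissa at most $k$, which is the displayed identity.

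Writing $\Phi(k):=p^{D(\mathfrak{a})}\mathcal{P}(\{s_i\},\{e_i\};\{m_i(k)\})$ for the generating function of heaps with maximal right abscissa $\le k$, the heaps with maximal right abscissa exactly $k$ are enumerated by $\Phi(k)-\Phi(k-1)$, which is (\ref{eq:GkPm}). The stated boundary convention $\mathcal{P}(\{s_i\},\{e_i\};\{m_i(k)\}):=0$ for $k<\max\{a_i:1\le i\le r-1\}$ is exactly what initializes this telescoping, since no heap can have maximal right abscissa below that bound.

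The step I expect to demand the most care is the one-cell dictionary between the offset of the up step and the right abscissa, i.e.\ fixing the threshold so that ``avoid $m_i(k)$'' reads as ``$\le k$'' and not ``$\le k-1$''; I would settle this by checking a small instance such as $\mathfrak{a}=(1,1)$ against the explicit formula before trusting the general statement. A secondary point to confirm is that the unique maximal (top) piece, whose abscissae are fixed by (F2) and are tracked by none of the $r-1$ paths, does not exceed the right abscissae of the lower pieces in the range $k\ge\max\{a_i\}$, so that the maximal right abscissa of the heap may legitimately be computed from the paths alone.
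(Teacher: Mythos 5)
Your proposal is correct and takes essentially the same route as the paper: the paper's proof likewise identifies avoidance of $m_{i}(k)$ by the $i$-th path with a bound on the right abscissa of the corresponding piece (via the correspondence from Proposition \ref{prop:LGV}) and then obtains $G_{k}(\mathfrak{a})$ as the difference $\mathcal{P}(\{s_{i}\},\{e_{i}\};\{m_{i}(k)\})-\mathcal{P}(\{s_{i}\},\{e_{i}\};\{m_{i}(k-1)\})$. Your calibration of the threshold (avoiding $m_{i}(k)$ means right abscissa at most $k$, not strictly less than $k$) in fact fixes a slip in the paper's wording, and the top-piece caveat you flag is passed over in silence by the paper's proof as well.
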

%%%%%%%%%%
\begin{proof}
Suppose that the $i$-th path of $r$-tuple non-intersecting paths does not
pass through the point $m_{i}(k)$.
This means that the right abscissa of the $i$-th piece in a heap 
is less than $k$.
The generating function $G_{k}(\mathfrak{a})$ is the weighted sum over all 
heaps whose maximal right abscissa of pieces is $k$.
The equality (\ref{eq:GkPm}) follows from the observations above.
\end{proof}

\subsection{Heaps of type \texorpdfstring{$II$}{II} for \texorpdfstring{$(1,k)$}{(1,k)}-Dyck paths 
and generating functions}
\label{sec:GDGF}
In this section, we focus on the heaps of type $II$ for $(1,k)$-Dyck paths with 
$k\ge1$.
We calculate the generating functions of heaps of type $II$ associated to 
the generalized  $(1,k)$-Dyck paths by use of the inversion lemma 
(see Proposition \ref{prop:inversion}).

A heap $H$ of type $II$ for $(1,k)$-Dyck paths of size $n$ satisfies the following simple three conditions:
\begin{enumerate}[(H1)]
\item The size of a piece is $k$.
\item A heap $H$ has a unique maximal piece and its left abscissa 
is one. 
\item The number of pieces is $n+1$.
\end{enumerate}

In Figure \ref{fig:k2}, we list up all heaps with three pieces and $(n,k)=(2,2)$.
Note that the number of pieces is three which is one plus $n=2$.

\begin{figure}[ht]
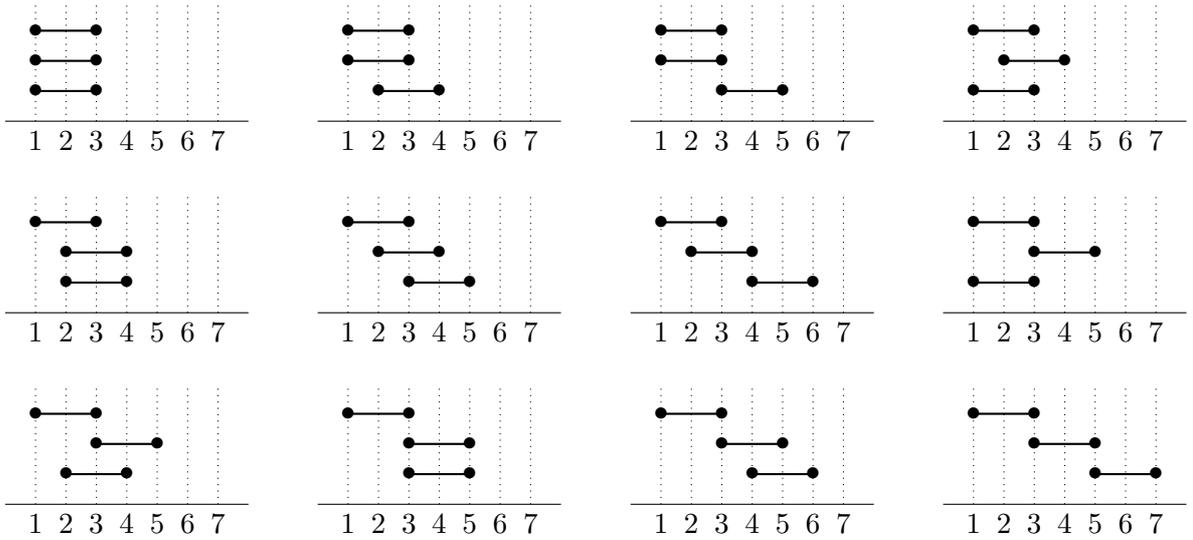

\tikzpic{-0.5}{[scale=0.4]
\draw(0,0)--(8,0);
\foreach \x in {1,...,7}{
\draw[dotted](\x,0)node[anchor=north]{$\x$}--(\x,4);
}
\draw[thick](1,3)node{$\bullet$}--(3,3)node{$\bullet$}(1,2)node{$\bullet$}--(3,2)node{$\bullet$}
(1,1)node{$\bullet$}--(3,1)node{$\bullet$};
}
\quad
\tikzpic{-0.5}{[scale=0.4]
\draw(0,0)--(8,0);
\foreach \x in {1,...,7}{
\draw[dotted](\x,0)node[anchor=north]{$\x$}--(\x,4);
}
\draw[thick](1,3)node{$\bullet$}--(3,3)node{$\bullet$}(1,2)node{$\bullet$}--(3,2)node{$\bullet$}
(2,1)node{$\bullet$}--(4,1)node{$\bullet$};
}
\quad
\tikzpic{-0.5}{[scale=0.4]
\draw(0,0)--(8,0);
\foreach \x in {1,...,7}{
\draw[dotted](\x,0)node[anchor=north]{$\x$}--(\x,4);
}
\draw[thick](1,3)node{$\bullet$}--(3,3)node{$\bullet$}(1,2)node{$\bullet$}--(3,2)node{$\bullet$}
(3,1)node{$\bullet$}--(5,1)node{$\bullet$};
}
\quad
\tikzpic{-0.5}{[scale=0.4]
\draw(0,0)--(8,0);
\foreach \x in {1,...,7}{
\draw[dotted](\x,0)node[anchor=north]{$\x$}--(\x,4);
}
\draw[thick](1,3)node{$\bullet$}--(3,3)node{$\bullet$}(2,2)node{$\bullet$}--(4,2)node{$\bullet$}
(1,1)node{$\bullet$}--(3,1)node{$\bullet$};
}
\\[11pt]
\tikzpic{-0.5}{[scale=0.4]
\draw(0,0)--(8,0);
\foreach \x in {1,...,7}{
\draw[dotted](\x,0)node[anchor=north]{$\x$}--(\x,4);
}
\draw[thick](1,3)node{$\bullet$}--(3,3)node{$\bullet$}(2,2)node{$\bullet$}--(4,2)node{$\bullet$}
(2,1)node{$\bullet$}--(4,1)node{$\bullet$};
}
\quad
\tikzpic{-0.5}{[scale=0.4]
\draw(0,0)--(8,0);
\foreach \x in {1,...,7}{
\draw[dotted](\x,0)node[anchor=north]{$\x$}--(\x,4);
}
\draw[thick](1,3)node{$\bullet$}--(3,3)node{$\bullet$}(2,2)node{$\bullet$}--(4,2)node{$\bullet$}
(3,1)node{$\bullet$}--(5,1)node{$\bullet$};
}
\quad
\tikzpic{-0.5}{[scale=0.4]
\draw(0,0)--(8,0);
\foreach \x in {1,...,7}{
\draw[dotted](\x,0)node[anchor=north]{$\x$}--(\x,4);
}
\draw[thick](1,3)node{$\bullet$}--(3,3)node{$\bullet$}(2,2)node{$\bullet$}--(4,2)node{$\bullet$}
(4,1)node{$\bullet$}--(6,1)node{$\bullet$};
}
\quad
\tikzpic{-0.5}{[scale=0.4]
\draw(0,0)--(8,0);
\foreach \x in {1,...,7}{
\draw[dotted](\x,0)node[anchor=north]{$\x$}--(\x,4);
}
\draw[thick](1,3)node{$\bullet$}--(3,3)node{$\bullet$}(3,2)node{$\bullet$}--(5,2)node{$\bullet$}
(1,1)node{$\bullet$}--(3,1)node{$\bullet$};
}
\\[11pt]
\tikzpic{-0.5}{[scale=0.4]
\draw(0,0)--(8,0);
\foreach \x in {1,...,7}{
\draw[dotted](\x,0)node[anchor=north]{$\x$}--(\x,4);
}
\draw[thick](1,3)node{$\bullet$}--(3,3)node{$\bullet$}(3,2)node{$\bullet$}--(5,2)node{$\bullet$}
(2,1)node{$\bullet$}--(4,1)node{$\bullet$};
}
\quad
\tikzpic{-0.5}{[scale=0.4]
\draw(0,0)--(8,0);
\foreach \x in {1,...,7}{
\draw[dotted](\x,0)node[anchor=north]{$\x$}--(\x,4);
}
\draw[thick](1,3)node{$\bullet$}--(3,3)node{$\bullet$}(3,2)node{$\bullet$}--(5,2)node{$\bullet$}
(3,1)node{$\bullet$}--(5,1)node{$\bullet$};
}
\quad
\tikzpic{-0.5}{[scale=0.4]
\draw(0,0)--(8,0);
\foreach \x in {1,...,7}{
\draw[dotted](\x,0)node[anchor=north]{$\x$}--(\x,4);
}
\draw[thick](1,3)node{$\bullet$}--(3,3)node{$\bullet$}(3,2)node{$\bullet$}--(5,2)node{$\bullet$}
(4,1)node{$\bullet$}--(6,1)node{$\bullet$};
}
\quad
\tikzpic{-0.5}{[scale=0.4]
\draw(0,0)--(8,0);
\foreach \x in {1,...,7}{
\draw[dotted](\x,0)node[anchor=north]{$\x$}--(\x,4);
}
\draw[thick](1,3)node{$\bullet$}--(3,3)node{$\bullet$}(3,2)node{$\bullet$}--(5,2)node{$\bullet$}
(5,1)node{$\bullet$}--(7,1)node{$\bullet$};
}	
\caption{Twelve heaps with three pieces for $k=2$.}
\label{fig:k2}
\end{figure}

The valuation $v(H)$ is given by 
\begin{align*}
v(H):=x^{\text{number of pieces}}p^{\sum(\text{left abscissae of pieces})}.
\end{align*} 
Then, the generating function $G^{(k)}(x,p)$ is defined to be
\begin{align}
\label{eq:Gkxp}
G^{(k)}(x,p):=\sum_{H}v(H),
\end{align}	
where the sum is taken over all heaps $H$ (of arbitrary size) of type $II$.

We calculate $G^{(k)}(x,p)$ by use of Proposition \ref{prop:inversion}.
First, we calculate the generating function of trivial heaps.
We define $T^{(k)}(x,p):=\sum_{n\ge0}(-1)^{n}t_{n}x^{n}$ as the ordinary generating function of 
trivial heaps.
The formal power series $T^{(k)}(x,p)$ satisfies 
\begin{align*}
T^{(k)}(x,p)&=1+(-1)xp T^{(k)}(p^{k+1}x,p)+(-1)xp^2 T^{(k)}(p^{k+2}x,p)+\cdots, \\
&=1+(-1)x\sum_{j\ge1}p^{j}T^{(k)}(p^{k+j}x,p). 
\end{align*}
Each term in the above equation corresponds to a heap such that we have a piece $p$ at position $j$ and 
there is no piece left to $p$.
This piece gives the factor $xp^j$ and $T^{(k)}(p^{k+j}x,p)$ gives the generating function of 
heaps right to $p$.
The above functional relation implies 
\begin{align*}
t_{n}&=\genfrac{}{}{}{}{p^{(k+1)(n-1)+1}}{1-p^{n}}t_{n-1}, \\
&=\genfrac{}{}{}{}{p^{n+(k+1)n(n-1)/2}}{(p;p)_{n}},
\end{align*}
where we have used the initial condition $t_{0}=1$, and $(a;q)_{n}=\prod_{k=0}^{n-1}(1-aq^{k})$ 
is the $q$-Pochhammer symbol.

By applying Proposition \ref{prop:inversion} to our model, 
the generating function $G^{(k)}(x,p)$ is given by 
\begin{align*}
G^{(k)}(x,p)&=\genfrac{}{}{}{}{T^{(k)}(px,p)}{T^{(k)}(x,p)}, \\
&=\genfrac{}{}{}{}{\displaystyle\sum_{n\ge0}(-1)^{n}x^{n}p^{2n+(k+1)n(n-1)/2}(p;p)_{n}^{-1}}
{\displaystyle\sum_{n\ge0}(-1)^{n}x^{n}p^{n+(k+1)n(n-1)/2}(p;p)_{n}^{-1}}.
\end{align*}

\begin{prop}
The generating function $G^{(k)}(x,p)$ satisfies the 
following functional relation:
\begin{align}
\label{eq:Gkfe}
G^{(k)}(x,p)=1+px \prod_{j=0}^{k}G^{(k)}(p^{j}x,p).
\end{align}
\end{prop}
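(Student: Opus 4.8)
The plan is to derive \eqref{eq:Gkfe} algebraically from the quotient formula
\[
G^{(k)}(x,p)=\frac{T^{(k)}(px,p)}{T^{(k)}(x,p)}
\]
established just above, rather than by a direct combinatorial decomposition of heaps. The key observation is that the product on the right-hand side of \eqref{eq:Gkfe} telescopes. Writing $T(x):=T^{(k)}(x,p)$ for brevity, each factor is $G^{(k)}(p^{j}x,p)=T(p^{j+1}x)/T(p^{j}x)$, so
\[
\prod_{j=0}^{k}G^{(k)}(p^{j}x,p)=\prod_{j=0}^{k}\frac{T(p^{j+1}x)}{T(p^{j}x)}=\frac{T(p^{k+1}x)}{T(x)},
\]
the intermediate factors cancelling in consecutive pairs. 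Substituting this and $G^{(k)}(x,p)=T(px)/T(x)$ into \eqref{eq:Gkfe} and clearing the common denominator $T(x)$, the assertion becomes equivalent to the single functional equation
\[
T^{(k)}(px,p)=T^{(k)}(x,p)+px\,T^{(k)}(p^{k+1}x,p).
\]

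It then remains only to verify this $T$-equation, which I would do by comparing coefficients of $x^{n}$. Since $T^{(k)}(x,p)=\sum_{n\ge0}(-1)^{n}t_{n}x^{n}$, the coefficient of $x^{n}$ on the left is $(-1)^{n}t_{n}p^{n}$, while on the right the summand $T^{(k)}(x,p)$ contributes $(-1)^{n}t_{n}$ and the summand $px\,T^{(k)}(p^{k+1}x,p)$ contributes $(-1)^{n-1}t_{n-1}\,p^{(k+1)(n-1)+1}$ (from its $x^{n-1}$ coefficient, shifted by the factor $px$). Equating both sides and dividing by $(-1)^{n}$ reduces the identity to
\[
t_{n}(1-p^{n})=t_{n-1}\,p^{(k+1)(n-1)+1},
\]
which is exactly the recursion for $t_{n}$ derived earlier in this section. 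Thus the $T$-equation holds coefficientwise, and \eqref{eq:Gkfe} follows.

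Given that the genuine work has already been done in establishing the quotient formula and the recursion for $t_{n}$, I do not expect a serious obstacle; the only points demanding care are the bookkeeping in the telescoping product and the index shift $m=n-1$ in the coefficient comparison. For completeness I would also note a purely combinatorial alternative: the summand $1$ accounts for the empty heap, while for a nonempty heap of type $II$ the unique maximal piece is forced to be the segment $[1,k+1]$ of valuation $px$, and removing it should split the remainder into $k+1$ independent sub-heaps occupying the regions below, the rescalings $x\mapsto p^{j}x$ recording their horizontal displacements and producing the factors $G^{(k)}(p^{j}x,p)$. The algebraic route is, however, the most economical in view of the formulae already at hand.
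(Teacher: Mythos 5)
Your proposal is correct and takes essentially the same route as the paper: the paper also reduces \eqref{eq:Gkfe} to the single functional equation $T^{(k)}(px,p)=T^{(k)}(x,p)+px\,T^{(k)}(p^{k+1}x,p)$ and then divides by $T^{(k)}(x,p)$, with the telescoping $T^{(k)}(p^{k+1}x,p)/T^{(k)}(x,p)=\prod_{j=0}^{k}G^{(k)}(p^{j}x,p)$ that you spell out being implicit in that division step. Your coefficientwise verification via the recursion $t_{n}(1-p^{n})=t_{n-1}p^{(k+1)(n-1)+1}$ is just a repackaging of the paper's series manipulation with the closed form $t_{n}=p^{n+(k+1)n(n-1)/2}/(p;p)_{n}$, since that closed form was derived from the same recursion.
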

%%%%%%%%%%%
\begin{proof}
We derive the functional relation for $T^{(k)}(x,p)$. 
We have 
\begin{align*}
T^{(k)}(px,p)&=\sum_{n\ge0}(-1)^{n}x^{n}\genfrac{}{}{}{}{p^{2n+(k+1)n(n-1)/2}}{(p;p)_{n}}, \\
&=T^{(k)}(x,p)-\sum_{n\ge1}(-1)^{n}x^{n}\genfrac{}{}{}{}{p^{n+(k+1)n(n-1)/2}}{(p;p)_{n-1}}, \\
&=T^{(k)}(x,p)-\sum_{n\ge0}(-1)^{n+1}x^{n+1}\genfrac{}{}{}{}{p^{n+1+(k+1)n(n+1)/2}}{(p;p)_{n}}, \\
&=T^{(k)}(x,p)+xp T^{(k)}(p^{k+1}x,p).
\end{align*}
By dividing both sides by $T^{(k)}(x,p)$, we obtain Eq. (\ref{eq:Gkfe}).
\end{proof}

Let $\mu_{0}(n):=(UD^{k})^{n}$ and $\mu$ be a generalized Dyck path below $\mu_{0}$. 
Then, we define a statistic $\mathrm{Area}(\mu)$ be the number of unit boxes 
above $\mu$ and below $\mu_{0}(n)$.
\begin{prop}
\label{prop:GinArea}
The generating function $G^{(k)}(x,p)$ is expressed in terms of generalized Dyck paths by
\begin{align}
\label{eq:GinArea}
G^{(k)}(x,p)=\sum_{n\ge0}(xp)^{n}\sum_{\mu\le \mu_{0}(n)}p^{\mathrm{Area}(\mu)}.
\end{align}
\end{prop}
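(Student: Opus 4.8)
The plan is to prove the identity by re-summing the defining series $G^{(k)}(x,p)=\sum_H v(H)$ after translating each heap of type $II$ back into a $(1,k)$-Dyck path. By the construction of Section~\ref{subsec:HGFII}, the heaps counted by $G^{(k)}$ are exactly those arising from pairs $(\mu_0(n),\mu)$ with $\mu\le\mu_0(n)$ and $\mu_0(n)=(UD^k)^n$: the sequence $\mathfrak{a}(\mu_0(n))$ fixes the sizes and the number of pieces, while the vector $\mathbf{c}=\mathbf{c}(\mu)$ fixes their left abscissae. Writing $H_\mu$ for the heap attached to $\mu$, we get $\sum_H v(H)=\sum_{n\ge0}\sum_{\mu\le\mu_0(n)}v(H_\mu)$, so the whole problem reduces to expressing $v(H_\mu)$ through the two path statistics on the right-hand side, namely the size $n$ and $\mathrm{Area}(\mu)$.

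First I would read off the $x$-exponent: by conditions (H1)--(H3) the number of pieces of $H_\mu$ depends only on $n$, so the power of $x$ is constant along each fiber $\{\mu:\mu\le\mu_0(n)\}$; together with the constant part of the $p$-exponent it will assemble into the factor $(xp)^n$. The heart of the argument is the variable part of the $p$-exponent, i.e. the sum of the left abscissae. Using $c_0=1$ for the unique maximal piece and $c_i=b_{r-i}(\mu)-b_{r-i}(\mu_0(n))+1$ for the remaining pieces, the telescoping sum gives $\sum(\text{left abscissae})=(\mathrm{const}_n)+\sum_j\big(b_j(\mu)-b_j(\mu_0(n))\big)$, where $\mathrm{const}_n$ collects the $+1$ contributions and depends only on $n$.

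The key step is then to identify $\sum_j\big(b_j(\mu)-b_j(\mu_0(n))\big)$ with $\mathrm{Area}(\mu)$. Since $b_j$ is the number of right steps preceding the $j$-th up step, the difference $b_j(\mu)-b_j(\mu_0(n))$ is exactly the horizontal displacement of the $j$-th up step of $\mu$ from its position in the top path $\mu_0(n)$, i.e. the number of unit boxes of the skew region between $\mu$ and $\mu_0(n)$ lying in row $j$; summing over $j$ counts every box once and yields $\mathrm{Area}(\mu)$. This is precisely the geometric content already exploited in the proof of Proposition~\ref{prop:LGV} (the number of boxes in a row equals the number of right steps to the left of the up step), so I expect it to transfer verbatim. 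Substituting, $v(H_\mu)=(xp)^n\,p^{\mathrm{Area}(\mu)}$, and summing first over $\mu\le\mu_0(n)$ and then over $n$ produces the claimed formula.

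The main obstacle is the bookkeeping at the two ends of this chain: pinning down the constant offset $\mathrm{const}_n$ in the left-abscissa sum exactly (equivalently, how the unique maximal piece, and the empty heap contributing the constant term, feed into the $n=0$ term), and confirming the box-count identity with the correct orientation of the skew shape. A clean cross-check, which I would include, is to verify that the right-hand side satisfies the functional equation~(\ref{eq:Gkfe}): the recursive $(k+1)$-fold Fuss--Catalan decomposition of a $(1,k)$-Dyck path refines additively under both the size and $\mathrm{Area}$, with the horizontal shifts of the $k+1$ sub-paths producing the arguments $p^jx$, so $\sum_{n\ge0}(xp)^n\sum_{\mu\le\mu_0(n)}p^{\mathrm{Area}(\mu)}$ and $G^{(k)}$ obey the same relation with the same initial term and therefore coincide.
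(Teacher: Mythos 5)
Your main argument takes a genuinely different route from the paper's. The paper proves Proposition~\ref{prop:GinArea} only by the argument you relegate to a ``cross-check'': it decomposes a $(1,k)$-Dyck path as $U\circ\mu_1\circ D\circ\mu_2\circ D\circ\cdots\circ\mu_k\circ D\circ\mu_{k+1}$ and asserts that the right-hand side of Eq.~(\ref{eq:GinArea}) satisfies the functional equation~(\ref{eq:Gkfe}), which $G^{(k)}$ was already shown to satisfy via the trivial-heap computation. Your primary route --- summing $v(H)$ fiberwise over the correspondence of Section~\ref{subsec:HGFII} and identifying $\sum_j\bigl(b_j(\mu)-b_j(\mu_0(n))\bigr)$ with $\mathrm{Area}(\mu)$ row by row --- is instead the content of the remark the paper places immediately \emph{after} the proposition (``this bijection preserves the valuation''), promoted to the proof itself. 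That trade is favorable: you get a weight-preserving bijection rather than an identity of solutions of a functional equation, you need no uniqueness argument for~(\ref{eq:Gkfe}), and the row-by-row box count is exactly the observation already used in Proposition~\ref{prop:LGV}, so it does transfer as you expect.

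However, the ``bookkeeping obstacle'' you flagged is a genuine discrepancy, not just an exercise in care, and it sits in the paper as well. Under the valuation of Section~\ref{sec:GDGF}, the heap $H_\mu$ attached to $\mu\le\mu_0(n)$ has $n+1$ pieces and left-abscissa sum $(n+1)+\mathrm{Area}(\mu)$ (the maximal piece contributes $c_0=1$ as well), so $v(H_\mu)=(xp)^{n+1}p^{\mathrm{Area}(\mu)}$, not $(xp)^{n}p^{\mathrm{Area}(\mu)}$: the heap sum $\sum_H v(H)$ is $xp$ times the stated right-hand side, while the inversion-lemma quotient $T^{(k)}(px,p)/T^{(k)}(x,p)$, which includes the empty heap, equals $1+xp$ times it. Consequently your concluding cross-check fails as written: the right-hand side of~(\ref{eq:GinArea}) does \emph{not} satisfy~(\ref{eq:Gkfe}) --- its coefficient of $x$ is $p(1+p+\cdots+p^k)$, whereas~(\ref{eq:Gkfe}) forces coefficient $p$ --- rather, $1+px\cdot\mathrm{RHS}$ satisfies~(\ref{eq:Gkfe}), equivalently $\mathrm{RHS}=\prod_{j=0}^{k}G^{(k)}(p^jx,p)$ for $G^{(k)}=T^{(k)}(px,p)/T^{(k)}(x,p)$. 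So to close either argument you must first fix one normalization of $G^{(k)}$ (the paper's three descriptions, $\sum_H v(H)$, the $T$-quotient, and Eq.~(\ref{eq:GinArea}), differ pairwise by a factor $xp$ and an additive $1$) and carry the constant $\mathrm{const}_n=n+1$ explicitly; once that is done, your bijective argument is complete and, unlike the functional-equation sketch, it also pinpoints where the normalization correction comes from.
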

%%%%%%%%%%%%
\begin{proof}
Equation (\ref{eq:GinArea}) follows from the straightforward calculation 
(see for example \cite{Shi22b}).
We give a sketch of a proof.
A $(1,k)$-Dyck path can be expressed as 
\begin{align*}
U\circ\mu_{1}\circ D\circ \mu_2\circ D\circ\ldots \circ \mu_{k}\circ D\circ \mu_{k+1},
\end{align*}
where $\mu_i$, $1\le i\le k+1$, are $(1,k)$-Dyck paths.
By rewriting this decomposition in terms of the generating function $G^{(k)}(x,p)$, 
one can show that Eq. (\ref{eq:GinArea}) satisfies Eq. (\ref{eq:Gkfe}).
\end{proof}

Given a $(1,k)$-Dyck path $\mu$, we have a bijection between a heap $H$ of type $II$
and the path $\mu$ via $\mathfrak{a}(\mu_{0})$ and $\mathbf{c}$ defined 
in Section \ref{subsec:HGFII}.
This bijection preserves the valuation given to $H$ and $\mu$, {\it i.e.}, 
the sum of the left abscissae of pieces in $H$ is equal to the 
number $\mathrm{Area}(\mu)$ of unit boxes above $\mu$ and below $\mu_{0}$.

\subsection{Generalized Dyck paths and heaps in \texorpdfstring{$k+1$}{k+1}-dimensions}
We introduce heaps in $k+1$-dimensions for $(1,k)$-Dyck paths.
Let $\mathcal{I}(k)$ be the set of points in $k$-dimensions 
\begin{align*}
\mathcal{I}(k)
:=
\left\{\mathbf{x}=(x_1,x_2,\ldots, x_{k}) \bigg| 
\begin{array}{c}x_i\ge1, \\
\mathbf{x} \text{ is weakly decreasing} \\
x_{1}-x_{k}\in\{0,1\}
\end{array}   \right\}.		
\end{align*}
A piece of our model is a unit $k$-dimensional hypercube.
Let $(h_1,h_2,\ldots,h_{k+1})$ be the coordinate of the center of a hypercube.
We pile hypercubes as pieces of a heap such that 
\begin{enumerate}[({I}1)]
\item The coordinates satisfies $(h_1,h_2,\ldots,h_{k})\in\mathcal{I}(k)$.
\item $h_{k+1}\in\mathbb{N}_{\ge0}$.
\item A heap has a unique maximal piece such that its projection to the hyper plane $h_{k+1}=0$
is $(1,1,\ldots,1)$.	
\end{enumerate}

\begin{defn}
We denote by $\mathcal{H}(\mathcal{I}(k))$ the set of heaps 
satisfying the conditions (I1), (I2) and (I3).
\end{defn}

\begin{remark}
Note that $\mathcal{I}(k)$ is the subset of points in $\mathbb{N}_{\ge1}^{k}$.
A heap in $\mathcal{H}(\mathcal{I}(k))$ can be visualized in $k+1$-dimensions.
\end{remark}

Below, we study heaps in $\mathcal{H}(\mathcal{I}(k))$ in terms of a $(1,k)$-Dyck path 
and a $k$-tuple Dyck paths. 
First, we introduce a correspondence between a $(1,k)$-Dyck path and a $k$-tuple of 
Dyck paths.
Then, we construct a bijection between a heap in $\mathcal{H}(\mathcal{I}(k))$ 
and a $k$-tuple Dyck paths.
Finally, we show that the generating function of heaps in $\mathcal{H}(\mathcal{I}(k))$
coincides with the generating function of heaps for $(1,k)$-Dyck paths introduced in 
Section \ref{sec:GDGF}.

Let $\mathcal{D}$ be a $(1,k)$-Dyck path of size $n$.
We decompose the path $\mathcal{D}$ into a $k$-tuple Dyck paths following \cite{Shi21b}.
Recall $\mathcal{D}$ is a lattice path from $(0,0)$ to $(kn,n)$ which is below $y=x/k+1$ and above the 
line $y=0$.
Let $\mathfrak{m}(\mathcal{D}):=(m_1,\ldots,m_{n+1})$ be a sequence of non-negative 
integers such that 
\begin{enumerate}
\item $m_{n+1}=0$.
\item Let $U_{i}$ be an up step in $\mathcal{D}$ such that it connect the point 
$(\alpha,i-1)$ with $(\alpha,i)$ for some $\alpha$.
The number $m_{i}$, $1\le i\le n$, is the number of unit boxes which is right to 
$U_{i}$ and left to the line $x=kn$.
\end{enumerate}
We call a sequence $\mathfrak{m}(\mathcal{D})$ a {\it step sequence} for $\mathcal{D}$.
Note that we have a unique step sequence for a generalized Dyck path if $k$ given.
This implies that we have a natural bijection between $\mathcal{D}$ and $\mathfrak{m}(\mathcal{D})$.

We construct a $k$-tuple Dyck paths from $\mathfrak{m}(\mathcal{D})$:
\begin{defn}[Definition 5.6 in \cite{Shi21b}]
A $k$-tuple of step sequences $\{\mathfrak{m}(D_{i})\}_{i=1}^{k}$ is recursively defined as follows.
\begin{enumerate}
\item Set $i=k$ and $\mathfrak{m}:=\mathfrak{m}(\mathcal{D})$.
\item We define 
\begin{align}
\label{eq:mceil}
\mathfrak{m}(D_{i}):=\left\lceil\genfrac{}{}{}{}{\mathfrak{m}}{i} \right\rceil.
\end{align}
\item Replace $\mathfrak{m}$ with $\mathfrak{m}-\mathfrak{m}(D_{i})$.
Decrease $i$ by one, and go to $(2)$. The algorithm stops when $i=1$.
\end{enumerate}
\end{defn}

\begin{remark}
We have a $k$-tuple of step sequence $\{\mathfrak{m}(D_i)\}_{i=1}^{k}$.
From Eq. (\ref{eq:mceil}), $\mathfrak{m}(D_{i})$ can be regarded as 
a step sequence of a Dyck path $D_{i}$.
Then, the Dyck path $D_{i}$ is above the Dyck path $D_{i+1}$ for 
all $1\le i\le k-1$.  
\end{remark}

\begin{lemma}
\label{lemma:mDi}
The sequence $\mathfrak{m}(D_{i})$ is weakly decreasing. 
\end{lemma}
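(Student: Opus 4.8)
The plan is to argue by downward induction on the loop index $i$ (from $i=k$ down to $i=1$), carrying along the auxiliary sequence $\mathfrak{m}$ that is overwritten in step (3) of the definition, and to show that this sequence stays weakly decreasing throughout the recursion. First I would record the base fact that $\mathfrak{m}(\mathcal{D})$ is itself weakly decreasing: the up steps $U_1,\ldots,U_n$ of $\mathcal{D}$ occur at weakly increasing horizontal coordinates $\alpha_1\le\cdots\le\alpha_n$, since a lattice path only moves up and to the right, so each $m_i=kn-\alpha_i$ is weakly decreasing in $i$, and appending $m_{n+1}=0\le m_n$ preserves this.

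The crux is the following elementary monotonicity statement: for every fixed integer $i\ge1$, the map $f_i:\mathbb{Z}_{\ge0}\to\mathbb{Z}_{\ge0}$ defined by $f_i(m):=m-\lceil m/i\rceil$ is weakly increasing. This follows because $\lceil (m+1)/i\rceil-\lceil m/i\rceil\in\{0,1\}$, so that
\[
f_i(m+1)-f_i(m)=1-\bigl(\lceil (m+1)/i\rceil-\lceil m/i\rceil\bigr)\in\{0,1\}\ge0.
\]
I expect this to be the main (though still short) obstacle, since everything else is bookkeeping built on top of it.

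For the inductive step, suppose that at the start of the iteration with index $i$ the current sequence $\mathfrak{m}=(m_1,\ldots,m_{n+1})$ is weakly decreasing. Because the ceiling function is weakly increasing (i.e. $a\ge b$ implies $\lceil a/i\rceil\ge\lceil b/i\rceil$), the sequence $\mathfrak{m}(D_i)=\lceil\mathfrak{m}/i\rceil$ obtained in (\ref{eq:mceil}) is weakly decreasing; this is exactly the assertion of the lemma for the index $i$. Moreover, the updated sequence $\mathfrak{m}-\mathfrak{m}(D_i)$ fed into the next iteration has $j$-th component $f_i(m_j)$, and since $f_i$ is weakly increasing while $\mathfrak{m}$ is weakly decreasing, this new sequence is again weakly decreasing. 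Starting the induction from $\mathfrak{m}(\mathcal{D})$ at $i=k$ then yields that every $\mathfrak{m}(D_i)$, $1\le i\le k$, is weakly decreasing, which proves the lemma.
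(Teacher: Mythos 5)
Your proof is correct and follows essentially the same route as the paper, whose two-line argument simply notes that the step sequence of $\mathcal{D}$ is weakly decreasing and that Eq.~(\ref{eq:mceil}) preserves this property. Your extra observation that $m\mapsto m-\lceil m/i\rceil$ is weakly increasing---so the updated sequence $\mathfrak{m}-\mathfrak{m}(D_i)$ fed into the next iteration remains weakly decreasing, which is needed for the conclusion at every index $i<k$---is precisely the inductive bookkeeping the paper leaves implicit, so your version is the same approach carried out in full.
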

%%%%%%%%%%%%
\begin{proof}
By construction, the step sequence $\mathfrak{\mathcal{D}}$ is weakly decreasing.
The equation (\ref{eq:mceil}) insures that $\mathfrak{m}(D_{i})$ is 
weakly decreasing.
\end{proof}

By definition, the length of $\mathfrak{m}(D_{i})$ is $n$. We define $M_{i,j}$ 
such that $\mathfrak{m}(D_{i})=:(M_{i,1},M_{i,2},\ldots,M_{i,n+1})$ for $1\le i\le k$.

\begin{lemma}
\label{lemma:Mij}
We have $\{M_{i,j}:1\le i\le k\}$ is weakly decreasing and 
$\max\{M_{i,j}:1\le i\le k\}-\min\{M_{i,j}:1\le i\le k\}\in\{0,1\}$.
\end{lemma}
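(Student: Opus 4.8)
The plan is to reduce Lemma \ref{lemma:Mij} to an elementary statement about the greedy ``ceiling splitting'' of a single integer, and then to establish that statement by downward induction on the recursion index. Since the operations in Eq.~(\ref{eq:mceil}) --- componentwise ceiling division and componentwise subtraction --- act independently on each coordinate, I would first fix a column index $j$ and set $m:=m_j\ge0$, the $j$-th entry of $\mathfrak{m}(\mathcal{D})$. With this reduction, the numbers $M_{k,j},M_{k-1,j},\ldots,M_{1,j}$ are exactly the parts produced, in this order, by iterating the map $v\mapsto(\lceil v/i\rceil,\,v-\lceil v/i\rceil)$ starting from $v=m$ at level $i=k$ and going down to $i=1$. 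It therefore suffices to prove that the $k$ parts sum to $m$, that each part lies in $\{\lfloor m/k\rfloor,\lceil m/k\rceil\}$, and that they are weakly monotone in $i$. Summing to $m$ is immediate because each step removes exactly $M_{i,j}$ from the running value; the other two properties are the content of the lemma.

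For the inductive core I would first record the identity $v-\lceil v/i\rceil=\lfloor v(i-1)/i\rfloor$ and then prove the following invariant by downward induction on the number $i$ of remaining levels: if $v$ is split greedily among $i$ levels, then every resulting part equals $\lfloor v/i\rfloor$ or $\lceil v/i\rceil$, and the parts come out in weakly decreasing order. The base case $i=1$ is trivial. For the step, write $v=qi+r$ with $0\le r<i$, so that the top part is $M=\lceil v/i\rceil=q+\delta(r>0)$ and the residue handed down is $v'=\lfloor v(i-1)/i\rfloor=q(i-1)+\max(r-1,0)$. A short case analysis on $r\in\{0\}$, $\{1\}$, $\{2,\ldots,i-1\}$ then yields $\lfloor v'/(i-1)\rfloor=\lfloor v/i\rfloor=q$ and $\lceil v'/(i-1)\rceil\le\lceil v/i\rceil$. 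By the induction hypothesis the remaining $i-1$ parts lie in $\{\lfloor v'/(i-1)\rfloor,\lceil v'/(i-1)\rceil\}\subseteq\{q,q+1\}$ and are weakly decreasing, while the top part $M=\lceil v/i\rceil$ dominates all of them; this simultaneously keeps every part inside the two-element band $\{\lfloor v/i\rfloor,\lceil v/i\rceil\}$ and preserves weak monotonicity of the whole list.

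Applying the invariant to the top-level call $v=m$, $i=k$ shows that all of $M_{1,j},\ldots,M_{k,j}$ lie in $\{\lfloor m/k\rfloor,\lceil m/k\rceil\}$, whence $\max_iM_{i,j}-\min_iM_{i,j}\in\{0,1\}$, and that they form a monotone sequence in $i$: weakly increasing as $i$ runs from $1$ to $k$, equivalently weakly decreasing when listed in the production order $i=k,k-1,\ldots,1$, which is the assertion of Lemma \ref{lemma:Mij}. I expect the only delicate point to be the inequality $\lceil v'/(i-1)\rceil\le\lceil v/i\rceil$: this is precisely what prevents the spread from growing beyond $1$ as the residue is passed down the recursion, and it is where the bound $\{0,1\}$ originates. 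Everything else is routine bookkeeping, and the argument runs in parallel to --- and complements --- the monotonicity in $j$ already recorded in Lemma \ref{lemma:mDi}.
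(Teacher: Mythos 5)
Your proof is correct, and it is a rigorous rendering of essentially the same computation the paper performs, organized differently. The paper's proof is non-inductive: it writes $m_j=km'+k'$ with $0\le k'\le k-1$ and simply reads off from Eq.~(\ref{eq:mceil}) that $k'$ of the parts equal $m'+1$ and the other $k-k'$ equal $m'$, with the larger parts grouped together, so that monotonicity and $\max-\min\in\{0,1\}$ are immediate. What the paper leaves implicit is exactly why the iterated ceiling splitting distributes the remainder $k'$ as single units among consecutive levels --- and that is what your downward induction supplies, via the identity $v-\lceil v/i\rceil=\lfloor v(i-1)/i\rfloor$ and the two facts $\lfloor v'/(i-1)\rfloor=\lfloor v/i\rfloor$ and $\lceil v'/(i-1)\rceil\le\lceil v/i\rceil$; your case analysis on $r$ checks out in all three regimes. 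Your band invariant buys rigor without ever computing the exact multiset of parts (which your argument nevertheless implies, since parts lying in $\{\lfloor m/k\rfloor,\lceil m/k\rceil\}$ and summing to $m$ forces exactly $k'$ parts equal to $\lceil m/k\rceil$), while the paper's closed form is shorter and more explicit. One point of orientation deserves mention: you prove weak decrease in the production order $i=k,k-1,\ldots,1$, which is what the recursion as literally stated yields, since $M_{k,j}=\lceil m_j/k\rceil$ is the largest part. The paper's proof instead asserts $M_{i,j}=m'+1$ for $i\le k'$ and $M_{i,j}=m'$ for $i\ge k'+1$, i.e.\ weak decrease in increasing $i$, which tacitly relabels $i\leftrightarrow k+1-i$; it is this relabeled orientation that the paper uses downstream, e.g.\ in part (2) of Lemma \ref{lemma:sjtj} and in the example with $\mathfrak{m}=(8,1,0,0)$, where $\mathbf{x}_4=(2,1,1)$ rather than $(1,1,2)$. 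Since the lemma only asserts that the values, suitably ordered, are monotone with spread at most one, this is a labeling inconsistency in the paper rather than a gap in your argument --- but if your proof were adopted, you should fix the convention explicitly so that the entries of each $\mathbf{x}_i$ come out weakly decreasing, as membership in $\mathcal{I}(k)$ requires.
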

\begin{proof}
Suppose that $m_{i}=km+k'$ with $m\ge0$ and $k'\in[0,k-1]$.
From Eq. (\ref{eq:mceil}), we have 
$\max\{M_{i,j}:1\le i\le k\}-\min\{M_{i,j}:1\le i\le k\}=0$ 
if $k'=0$. 
If $k'\neq 0$, we have $M_{i,j}=m$ for $i\ge k'+1$ and $M_{i,j}=m+1$ 
for $i\le k'$.
This implies $\{M_{i,j}:1\le i\le k\}$ is weakly decreasing and 
$\max\{M_{i,j}:1\le i\le k\}-\min\{M_{i,j}:1\le i\le k\}=1$,
which completes the proof.
\end{proof}

Then, we define 
\begin{align}
\label{eq:xij}
x_{i,j}:=j-M_{k+1-i,n+2-j},
\end{align}
for $1\le i\le k+1$ and $1\le j\le n+1$, and 
\begin{align}
\label{eq:xi}
\mathbf{x}_{i}:=(x_{1,i},x_{2,i},\ldots,x_{k,i}),
\end{align} 
for $1\le i\le n+1$.

Since $m_{n+1}=0$ in $\mathfrak{m}(\mathcal{D})$, we have 
$\mathbf{x}_{1}=(1,1,\ldots,1)$.
\begin{lemma}
\label{lemma:sjtj}
Let $\mathbf{x}_{i}:=(s_1,s_2,\ldots,s_{k})$ and  $\mathbf{x}_{i+1}=(t_1,t_2,\ldots,t_{k})$.
We have 
\begin{enumerate}
\item $-1\le s_{j}-t_{j}\le i-1$ for $1\le j\le k$.
\item $s_{j}$ is weakly decreasing and $\max(\mathbf{x}_{i})\le \min(\mathbf{x}_{i})+1$.
\end{enumerate}
\end{lemma}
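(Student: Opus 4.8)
The plan is to reduce both parts to two structural facts that are already available: that each decomposed step sequence $\mathfrak{m}(D_b)=(M_{b,1},\ldots,M_{b,n+1})$ is weakly decreasing (Lemma \ref{lemma:mDi}), and that in a fixed column the numbers $\{M_{a,j}:1\le a\le k\}$ are weakly decreasing in $a$ with spread at most one (Lemma \ref{lemma:Mij}). The one preliminary input I would isolate is that each $D_b$ is a genuine Dyck path of size $n$, so that its step sequence obeys the standard bounds $0\le M_{b,p}\le n+1-p$ with $M_{b,n+1}=0$; here the upper bound records that the $p$-th up step of $D_b$ has abscissa at least $p-1$, i.e.\ lies weakly to the right of the corresponding up step of the highest path $(UD)^n$.

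For part (1) I would first rewrite each component difference in terms of consecutive entries of a single step sequence. Writing $\mathbf{x}_i=(s_1,\ldots,s_k)$, $\mathbf{x}_{i+1}=(t_1,\ldots,t_k)$ and using the definition (\ref{eq:xij}), put $b=k+1-j$; then
\begin{align*}
s_j-t_j
=\bigl(i-M_{b,n+2-i}\bigr)-\bigl((i+1)-M_{b,n+1-i}\bigr)
=-1+\bigl(M_{b,n+1-i}-M_{b,n+2-i}\bigr).
\end{align*}
Thus part (1) becomes a statement about the jump of $\mathfrak{m}(D_b)$ between the consecutive positions $n+1-i$ and $n+2-i$, both of which lie in $[1,n+1]$ since $1\le i\le n$. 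The lower bound is immediate from Lemma \ref{lemma:mDi}: weak monotonicity gives $M_{b,n+1-i}\ge M_{b,n+2-i}$, so the parenthesis is nonnegative and $s_j-t_j\ge-1$. For the upper bound I would combine the two one-sided estimates $M_{b,n+1-i}\le n+1-(n+1-i)=i$ and $M_{b,n+2-i}\ge0$, which yield $M_{b,n+1-i}-M_{b,n+2-i}\le i$ and hence $s_j-t_j\le i-1$.

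For part (2) I would fix $i$, set $j=n+2-i$, and read the vector directly off the fixed column: by (\ref{eq:xij}) one has $s_a=x_{a,i}=i-M_{k+1-a,j}$. As $a$ runs from $1$ to $k$ the first index $k+1-a$ decreases, so by the weakly decreasing behaviour in Lemma \ref{lemma:Mij} the values $M_{k+1-a,j}$ increase weakly, whence $s_a=i-M_{k+1-a,j}$ is weakly decreasing. Moreover $\max(\mathbf{x}_i)=i-\min_a M_{a,j}$ and $\min(\mathbf{x}_i)=i-\max_a M_{a,j}$, so $\max(\mathbf{x}_i)-\min(\mathbf{x}_i)=\max_a M_{a,j}-\min_a M_{a,j}\in\{0,1\}$ by Lemma \ref{lemma:Mij}, giving $\max(\mathbf{x}_i)\le\min(\mathbf{x}_i)+1$.

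The only substantive ingredient is the upper bound $M_{b,p}\le n+1-p$ invoked in part (1); everything else is bookkeeping with the two cited lemmas, and I expect this bound to be the \emph{main obstacle}, since it is the single input not literally stated among the preceding lemmas. It has to be extracted from the fact that the ceiling decomposition of \cite{Shi21b} produces honest Dyck paths $D_b$, whose $p$-th up step lies weakly to the right of the $p$-th up step of $(UD)^n$. If one wishes to avoid appealing to the Dyck-path property, the same inequality can instead be obtained by carrying the bound $m_p\le k(n+1-p)$, valid for the $(1,k)$-Dyck path $\mathcal{D}$, through the ceiling operation (\ref{eq:mceil}).
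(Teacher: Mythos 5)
Your proof is correct and follows essentially the same route as the paper's: the same identity $s_j-t_j=-1+\bigl(M_{b,n+1-i}-M_{b,n+2-i}\bigr)$, the lower bound from Lemma \ref{lemma:mDi}, the upper bound from the Dyck-path estimate $M_{b,p}\le n+1-p$ (which the paper likewise invokes by saying $\mathfrak{m}(D_i)$ is a step sequence of a Dyck path), and part (2) read off columnwise from Lemma \ref{lemma:Mij}. The only divergences are cosmetic and in your favor: your first index $b=k+1-j$ is the correct one (the paper's displayed $M_{n+2-j,\cdot}$ is a typo), and you make explicit the bound $0\le M_{b,p}\le n+1-p$ that the paper leaves as an unproved appeal to the Dyck-path property.
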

%%%%%%%%%%%
\begin{proof}
(1). 
From the definition of $\mathbf{x}_{i}$, we have 
\begin{align*}
s_{j}-t_{j}&=x_{j,i}-x_{j,i+1}, \\
&=-1+M_{n+2-j,n+1-i}-M_{n+2-j,n+2-i},
\end{align*}
Note that we have $M_{j,i}\ge M_{j,i+1}$ from Lemma \ref{lemma:mDi}, 
and $M_{j,i}\le n+1-i$ since $\mathfrak{m}(D_{i})$ is a step sequence 
of a Dyck path.
From these observations, we have $-1\le s_{j}-t_{j}\le i-1$.

(2). From Lemma \ref{lemma:Mij} and Eq. (\ref{eq:xi}), it is obvious 
that $\{s_{1},\ldots,s_{k}\}$ is weakly decreasing and $\max(\mathbf{x}_{i})\le\min(\mathbf{x}_{i})+1$.
\end{proof}

Let $\mathbf{x}=(x_1,\ldots,x_{k})$ and $\mathbf{y}=(y_1,\ldots,y_{k})$ 
be elements in $\mathcal{I}(k)$.
We consider a graph $\mathcal{G}(k)$ whose vertex set is $\mathcal{I}(k)$ and the set $E(k)$ of edges 
such that 
\begin{align*}
(\mathbf{x},\mathbf{y})\in E(k) \Leftrightarrow x_{i}-y_{i}\in\{-1,0,1\} \text{ for } 1\le i\le k.
\end{align*}
Note that $(\mathbf{x},\mathbf{x})\in E(k)$ for all $\mathbf{x}\in\mathcal{I}(k)$.

Graphically, $\mathcal{G}(3)$ partially looks as in Figure \ref{fig:kdim}.
We omit loops from $\mathbf{x}$ to $\mathbf{x}$, and edges which connect $\mathbf{x}$ with 
$\mathbf{y}$ such that $x_1=y_{1}=3$.
We identify the vertices with the same label.
%%%%%%%%%%%%%%%
\begin{figure}[ht]
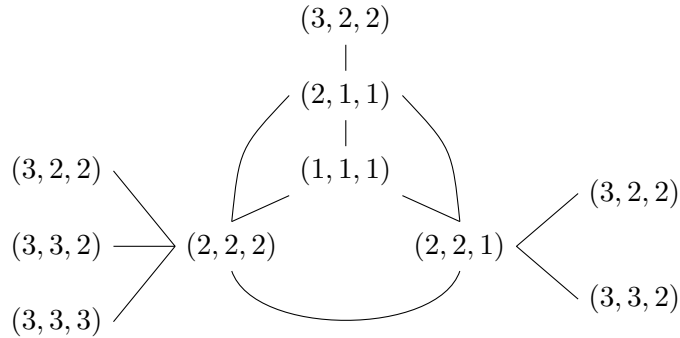

\tikzpic{-0.5}{
\node(0)at(0,0){$(1,1,1)$};
\node(1)at(0,1){$(2,1,1)$};
\node(2)at(0,2){$(3,2,2)$};
\draw(0.north)--(1.south)(1.north)--(2.south);
\node(3)at(1.5,-1){$(2,2,1)$};
\node(4)at(3.8,-0.3){$(3,2,2)$};
\node(5)at(3.8,-1.7){$(3,3,2)$};
\draw(0.{south east})--(3.north)(3.east)--(4.west)(3.east)--(5.west);
\node(6)at(-1.5,-1){$(2,2,2)$};
\node(7)at(-3.8,0){$(3,2,2)$};
\node(8)at(-3.8,-1){$(3,3,2)$};
\node(9)at(-3.8,-2){$(3,3,3)$};
\draw(0.south west)--(6.north)(6.west)--(7.east)(6.west)--(8.east)(6.west)--(9.east);
\draw(1.east)..controls(1.4,0.3)..(3.north)(3.south)..controls(1.2,-2.2)and(-1.2,-2.2)..(6.south)(6.north)..controls(-1.4,0.3)..(1.west);
}
\caption{A graph $\mathcal{G}(3)$ without loops and some edges}
\label{fig:kdim}
\end{figure}

Suppose we have an $n+1$-tuple $\mathcal{X}:=\{\mathbf{x}_{i}\}_{i=1}^{n+1}$ defined 
by Eq. (\ref{eq:xi}).
Given $i$, let $j<i$ be the integer such that 
$(\mathbf{x}_{i},\mathbf{x}_{j'})\notin E(k)$ for $j+1\le j'\le i-1$ and 
$(\mathbf{x}_{i},\mathbf{x}_{j})\in E(k)$.
We write this relation as $\mathbf{x}_{i}\rightarrow\mathbf{x}_{j}$.
This relation defines a directed graph on the set $\mathcal{X}$.
We say that $\mathbf{x}_{i}$ and $\mathbf{x}_{j}$ are connected 
if there is an undirected path from $\mathbf{x}_{i}$ to $\mathbf{x}_{j}$ 
on the graph.

\begin{defn}
We define a forest $\mathtt{For}(\mathcal{X})$ from $\mathcal{X}$ as 
\begin{align*}
\mathtt{For}(\mathcal{X}):=\{T_{i}: 1\le i\le r\},
\end{align*}
where $T_{i}$ is a connected component on the directed graph of $\mathcal{X}$.
The integer $r$ is the number of connected components.
\end{defn}

One can construct a forest from an $n+1$-tuple $\mathcal{X}$.
The next lemma follows from the properties of $\mathcal{X}$.

\begin{lemma}
\label{lemma:ForX}
Suppose $\mathcal{X}$ is an $n+1$-tuple constructed by Eq. (\ref{eq:xi}).
Then, $\mathtt{For}(\mathcal{X})$ is a planar rooted tree, i.e.,
it satisfies 
\begin{enumerate}
\item $\mathtt{For}(\mathcal{X})$ consists of a unique connected component.
\item There is no outgoing edge from $\mathbf{x}_{1}$, which means that 
$\mathbf{x}_{1}$ is the root of a tree.
\item There exists a unique directed path from $\mathbf{x}_{i}$ to 
the root $\mathbf{x}_{1}$ for all $2\le i\le n+1$.
\end{enumerate}
\end{lemma}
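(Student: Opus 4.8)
The plan is to reduce all three assertions to a single statement: every vertex $\mathbf{x}_i$ with $i\ge2$ has an outgoing edge, i.e. a parent. Once this is established, the structure of the directed graph does the rest. By the definition of the relation $\mathbf{x}_i\to\mathbf{x}_j$, each $\mathbf{x}_i$ with $i\ge2$ has at most one outgoing edge, to some strictly smaller index $j<i$, while $\mathbf{x}_1$ has none since there is no index below $1$. Hence the parent map strictly decreases the index, so iterating it from any $\mathbf{x}_i$ terminates, and it can only terminate at the unique vertex with no parent, namely $\mathbf{x}_1$. This produces the unique directed path from $\mathbf{x}_i$ to $\mathbf{x}_1$ (assertion (3)), shows that every vertex lies in the undirected connected component of $\mathbf{x}_1$ (assertion (1)), and identifies $\mathbf{x}_1$ as the unique root (assertion (2)).

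The heart of the argument is therefore parent existence: for each $i\ge2$ there is some $j<i$ with $(\mathbf{x}_i,\mathbf{x}_j)\in E(k)$, that is $|x_{\ell,i}-x_{\ell,j}|\le1$ for all $1\le\ell\le k$. I would introduce $j^{\ast}$ as the largest index $j<i$ satisfying the upper bound $x_{\ell,j}\le x_{\ell,i}+1$ for every $\ell$. Such an index exists: since $\mathbf{x}_1=(1,\dots,1)$ and the coordinates of $\mathbf{x}_i$ are positive, $j=1$ always qualifies. It then remains to verify the matching lower bound $x_{\ell,j^{\ast}}\ge x_{\ell,i}-1$ for all $\ell$, after which $j^{\ast}$ is the desired parent.

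I would split into two cases. If $j^{\ast}=i-1$, the lower bound is immediate from Lemma~\ref{lemma:sjtj}(1), which gives $x_{\ell,i}\le x_{\ell,i-1}+1$; that is, each coordinate increases by at most one when the index advances by one. If $j^{\ast}<i-1$, then by maximality of $j^{\ast}$ the vector $\mathbf{x}_{j^{\ast}+1}$ violates the upper bound at some coordinate $\ell_0$, so $x_{\ell_0,j^{\ast}+1}\ge x_{\ell_0,i}+2$; combining this with the forward increment bound $x_{\ell_0,j^{\ast}+1}\le x_{\ell_0,j^{\ast}}+1$ and with the defining upper bound forces the equality $x_{\ell_0,j^{\ast}}=x_{\ell_0,i}+1$. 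Here the thin structure of $\mathcal{I}(k)$ enters: since every $\mathbf{x}_m\in\mathcal{I}(k)$ by Lemma~\ref{lemma:sjtj}(2), both $\mathbf{x}_i$ and $\mathbf{x}_{j^{\ast}}$ are weakly decreasing with $\max-\min\le1$. Writing $w_i=\min(\mathbf{x}_i)$ and $w_{j^{\ast}}=\min(\mathbf{x}_{j^{\ast}})$, the equality at $\ell_0$ yields $w_{j^{\ast}}+1\ge x_{\ell_0,j^{\ast}}=x_{\ell_0,i}+1\ge w_i+1$, so $w_{j^{\ast}}\ge w_i$; then for every $\ell$ one gets $x_{\ell,j^{\ast}}\ge w_{j^{\ast}}\ge w_i\ge x_{\ell,i}-1$, as required.

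The main obstacle I anticipate is precisely this multi-coordinate point. A single-coordinate "discrete intermediate value" argument only locates an earlier index compatible in one coordinate, whereas an edge of $\mathcal{G}(k)$ demands simultaneous compatibility in all $k$ coordinates. The almost-constant property of $\mathcal{I}(k)$ is what upgrades compatibility at the one witnessing coordinate $\ell_0$ to compatibility at all coordinates, and that is the step requiring care; everything else is bookkeeping with the increment bounds of Lemma~\ref{lemma:sjtj} and the terminating-index argument above.
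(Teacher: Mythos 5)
Your proposal is correct, and its core argument differs genuinely from the paper's. Both proofs reduce the lemma to the same statement --- every $\mathbf{x}_{i}$ with $i\ge2$ has an outgoing edge to some earlier vertex, after which the at-most-one-parent, index-decreasing structure yields (1), (2), (3) exactly as you describe --- but the paper establishes parent existence by induction on $p$, with a case split on the sign of the coordinate differences between $\mathbf{x}_{p}$ and $\mathbf{x}_{p+1}$: when some coordinate increases (case a), it argues via the thin structure of $\mathcal{I}(k)$ that $(\mathbf{x}_{p+1},\mathbf{x}_{p})\in E(k)$ directly; when all coordinates weakly decrease but $\mathbf{x}_{p}$ and $\mathbf{x}_{p+1}$ are not adjacent (case b), it deletes $\mathbf{x}_{p}$, asserts that $(\mathbf{x}_{1},\ldots,\mathbf{x}_{p-1},\mathbf{x}_{p+1})$ is again a valid tuple of step sequences, and invokes the induction hypothesis. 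Your argument is non-inductive: you pin down the candidate $j^{\ast}$ extremally as the largest earlier index satisfying the one-sided bound $x_{\ell,j}\le x_{\ell,i}+1$ for all $\ell$, and then force the matching lower bound by combining the $+1$-per-step increment bound of Lemma~\ref{lemma:sjtj}(1) at the pair $(j^{\ast},j^{\ast}+1)$ with the $\max-\min\le1$ property of Lemma~\ref{lemma:sjtj}(2); your chain $w_{j^{\ast}}+1\ge x_{\ell_0,j^{\ast}}=x_{\ell_0,i}+1\ge w_{i}+1$ is exactly the point where the thin structure upgrades single-coordinate compatibility to all coordinates, and it checks out (as does the boundary case $j^{\ast}=i-1$, including $i=2$). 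What your route buys: it sidesteps the paper's case b), whose deletion step quietly requires that the reduced tuple still arises from admissible step sequences --- a claim the paper justifies only briefly --- and as a bonus your $j^{\ast}$ is not merely some neighbor but provably the parent itself, since any $j$ with $(\mathbf{x}_{i},\mathbf{x}_{j})\in E(k)$ satisfies your upper bound and hence $j\le j^{\ast}$. What the paper's induction buys is a proof that tracks the tree as it grows, mirroring the recursive construction of $\mathtt{For}(\mathcal{X})$ and reusing the same case analysis that appears elsewhere in Section 5; but as a verification of Lemma~\ref{lemma:ForX} alone, your argument is tighter and more self-contained.
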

%%%%%%%%%%%
\begin{proof}
By definition, (2) is obvious. 
The condition (1) and the definition of $\mathbf{x}_{i}\rightarrow\mathbf{x}_{j}$ 
implies the condition (3).
It is enough to prove the condition (1).

We show (1) by induction on $p\in[1,n+1]$.
Recall that we have $\mathbf{x}_{1}=(1,1,\ldots,1)$.
From Lemma \ref{lemma:sjtj}, we have $\mathbf{x}_{2}\rightarrow\mathbf{x}_{1}$.
Suppose that the set $\{\mathbf{x}_{i}\}_{i=1}^{p}$ forms a planar 
rooted tree satisfying the three conditions.
Then, we will show that $\mathbf{x}_{p+1}\rightarrow\mathbf{x}_{q}$ 
for some $q\in[1,p]$.
Let $\mathbf{x}_{p}:=(s_1,\ldots,s_{k})$ and $\mathbf{x}_{p+1}:=(t_1,\ldots,t_{k})$.
We consider two cases: a) $s_{j}-t_{j}=-1$ for some $j\in[1,k]$, and 
b) $s_{j}-t_{j}\ge 0$ for all $j\in[1,k]$.

\paragraph{Case a)}
Suppose that $s_{j}=t_{j}-1$ for some $j\in[1,k]$.
Let $s:=\max(\mathbf{x}_{i})$ and $s':=\min(\mathbf{x}_{i})$. From Lemma \ref{lemma:sjtj},
we have $s'\in\{s,s-1\}$.
We have two cases: i) $s_{j}=s$, and ii) $s_{j}=s'$.
\begin{enumerate}[i)]
\item Let
From Lemma \ref{lemma:sjtj}, we have $s_{i}=s$ for $1\le i\le j$ and this implies 
that $t_{i}=s+1$ for $1\le i\le j$.
Suppose that $s_{i'}=s$ for $j+1\le i'\le k'$ and $s_{i'}=s-1$ for $k'+1\le i'\le k$.
Since $t_{i'}\le s_{i'}+1$, $t_{i'}\le s$ for $k'+1\le i'\le k$. From Lemma \ref{lemma:sjtj},
we have $t_{i'}=s$ for $k'+1\le i'\le k$.
From Lemma \ref{lemma:sjtj}, $s_{i'}\in[s+1,s]$ and weakly decreasing.
From these, we have $t_{j}-s_{j}\in[0,1]$ for all $j\in[1,k]$.
By definition of $E(k)$, we have $\mathbf{x}_{p+1}\rightarrow\mathbf{x}_{p}$. 
\item
Suppose that $s_{i}=s'+1$ for $1\le i\le k'$ and $s_{i}=s'$ for $k'+1\le i\le k$.
We have $k'<j$ since $s_{j}=s'$.
From Lemma \ref{lemma:sjtj}, we have $t_{i}=s'+1$ for $k'+1\le i\le j$ and 
$t_{i}\in\{s'+1,s'+2\}$ or $t_{i}\in\{s',s'+1\}$ for $i\in[1,k]\setminus[k'+1,j]$.
From (2) in Lemma \ref{lemma:sjtj}, we have $t_{i}-s_{i}\in\{0,1\}$, 
which implies $\mathbf{x}_{p+1}\rightarrow\mathbf{x}_{p}$.
\end{enumerate}

\paragraph{Case b)}
If all $s_{j}-t_{j}\in\{0,1\}$, we have $\mathbf{x}_{p+1}\rightarrow\mathbf{x}_{p}$.
Suppose that there exist $j'$ such that $s_{j'}-t_{j'}\ge2$.
Then, $(\mathbf{x}_{p+1},\mathbf{x}_{p})\notin E(k)$.
Since $s_{j}-t_{j}\ge0$ for all $j\in[1,k]$, the $p$-tuple 
$\mathcal{X}':=(\mathbf{x}_{1},\ldots,\mathbf{x}_{p-1},\mathbf{x}_{p+1})$ 
gives a $p$-tuple of step sequences. 
By induction, the forest $\mathtt{For}(\mathcal{X}')$ is a planar rooted 
tree. This means that $\mathbf{x}_{p+1}\rightarrow\mathbf{x}_{p'}$ 
with some $p'\le p-1$.
From these observations, we have $\mathbf{x}_{p+1}\rightarrow\mathbf{x}_{q}$ 
for some $q\le p$. This completes the proof.
\end{proof}

\begin{prop}
\label{prop:bijprt}
A heap with $n+1$ pieces in $\mathcal{H}(\mathcal{I}(k))$ with conditions (I1), (I2) and (I3) is 
bijective to a planar rooted tree $\mathtt{For}(\mathcal{X})$.
\end{prop}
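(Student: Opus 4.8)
The plan is to produce mutually inverse maps between heaps in $\mathcal{H}(\mathcal{I}(k))$ with $n+1$ pieces and the planar rooted trees $\mathtt{For}(\mathcal{X})$, with the whole argument resting on one structural fact: for an $(n+1)$-tuple $\mathcal{X}=\{\mathbf{x}_i\}$ produced by (\ref{eq:xi}), two vectors $\mathbf{x}_a$ and $\mathbf{x}_b$ are joined by an edge of $E(k)$ if and only if one is an ancestor of the other in $\mathtt{For}(\mathcal{X})$. I would first set up the map from a tree to a heap. Take the $n+1$ vertices of $\mathtt{For}(\mathcal{X})$ as the pieces, label the vertex $\mathbf{x}_i$ by the point $\mathbf{x}_i\in\mathcal{I}(k)$, which lies in $\mathcal{I}(k)$ by Lemma \ref{lemma:sjtj}(2), and let the partial order be the transitive closure of the parent relation with each parent placed above its children; the height $h_{k+1}$ is then read off from any linear extension, for instance the depth in the tree. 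Conditions (I1) and (I2) are immediate, and (I3) holds because by Lemma \ref{lemma:ForX} the tree has the single root $\mathbf{x}_1=(1,\ldots,1)$, which becomes the unique maximal piece with the required projection. Axiom (A2) is automatic: in the ancestor order of a tree the covering relations are exactly the tree edges, and every edge $\mathbf{x}_i\to\mathbf{x}_c$ joins concurrent labels by the very definition of the parent map.

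The real content is axiom (A1): two pieces with concurrent labels must be comparable, which is precisely the forward half of the structural fact above and the step I expect to be the main obstacle. I would prove it by strong induction on the index, mirroring the case analysis of Lemma \ref{lemma:ForX} and exploiting that each $\mathbf{x}_i$ has the near-constant form $(m+1)^{\ell}m^{k-\ell}$ forced by Lemma \ref{lemma:sjtj}(2), so that membership in $E(k)$ translates into simple inequalities between the levels $m$ and the counts $\ell$ of top coordinates of the two vectors. Concretely, if $a<b$ and $(\mathbf{x}_a,\mathbf{x}_b)\in E(k)$, let $\mathbf{x}_c$ be the parent of $\mathbf{x}_b$, so $c\ge a$; the case $c=a$ is trivial, and in the case $c>a$ I must propagate the concurrency down one level, showing $(\mathbf{x}_a,\mathbf{x}_c)\in E(k)$ so that the inductive hypothesis places $\mathbf{x}_a$ above $\mathbf{x}_c$ and hence above $\mathbf{x}_b$. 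This propagation is the delicate point; I would handle it by tracking how the pair $(m_i,\ell_i)$ evolves along the step sequences, using the one-sided monotonicity of Lemma \ref{lemma:sjtj}(1) to control the intermediate vectors. Once (A1) holds, the ancestors of any piece form a chain of comparabilities, so each piece has a unique cover, namely its parent, and the Hasse diagram of the heap is genuinely the tree.

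Finally I would construct the inverse map and check that the two constructions undo one another. Given a heap $H\in\mathcal{H}(\mathcal{I}(k))$, read its pieces from the maximal piece downward along a linear extension compatible with the planar left-to-right order, record their labels as $\mathbf{x}_1,\ldots,\mathbf{x}_{n+1}$, and attach to each $\mathbf{x}_i$ the nearest concurrent piece lying above it as its parent. Axiom (A1) shows this nearest concurrent ancestor is well defined and that the directed graph so obtained is exactly $\mathtt{For}(\mathcal{X})$, while (A2) identifies the recovered covers with the tree edges; the planar order of children matches the index order in $\mathcal{X}$. Since both maps preserve the labels and the order of the pieces, composing them in either direction returns the original object, and the bijection follows.
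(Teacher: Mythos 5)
Your tree-to-heap direction is essentially the paper's proof: (I1) from part (2) of Lemma \ref{lemma:sjtj}, the planar rooted tree and its root $\mathbf{x}_1=(1,\dots,1)$ from Lemma \ref{lemma:ForX} giving the unique maximal piece (I3), and heights read off the tree with $h_{k+1}=0$ at the sinks giving (I2). But your load-bearing ``structural fact'' is wrong as stated, and the half of it you actually need is exactly the step you leave unproven. The equivalence fails in the ancestor-to-edge direction: in the paper's own example (step sequence $\mathfrak{m}=(8,1,0,0)$, $(n,k)=(3,3)$) one has $\mathbf{x}_3=(3,3,2)\rightarrow\mathbf{x}_2\rightarrow\mathbf{x}_1=(1,1,1)$, so $\mathbf{x}_1$ is an ancestor of $\mathbf{x}_3$, yet $\mathbf{x}_3-\mathbf{x}_1=(2,2,1)$, so $(\mathbf{x}_1,\mathbf{x}_3)\notin E(3)$. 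More seriously, the inductive reduction you propose for the forward half (your axiom (A1)) is itself false: you want to pass from $(\mathbf{x}_a,\mathbf{x}_b)\in E(k)$, $a<c<b$ with $\mathbf{x}_c$ the parent of $\mathbf{x}_b$, to $(\mathbf{x}_a,\mathbf{x}_c)\in E(k)$. Take $k=1$, $n=3$, step sequence $(2,0,0,0)$; by Eq. (\ref{eq:xi}) this gives $\mathcal{X}=(1,2,3,2)$ (writing $1$-vectors as integers) with parent chain $\mathbf{x}_4\rightarrow\mathbf{x}_3\rightarrow\mathbf{x}_2\rightarrow\mathbf{x}_1$. Here $a=1$, $b=4$ are concurrent ($|1-2|=1$), the parent of $\mathbf{x}_4$ is $\mathbf{x}_3$, and $|x_1-x_3|=2$, so concurrency does not propagate to the parent. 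The conclusion of (A1) still holds ($\mathbf{x}_1$ is an ancestor of $\mathbf{x}_4$ along the chain), but your induction cannot reach it; any proof must follow the whole parent chain (for instance by a connectivity argument on the supports of the hypercubes), not a one-step reduction, and ``tracking $(m_i,\ell_i)$'' as you suggest will not rescue the single-step version because the counterexample above already lives in the simplest case $k=1$.

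For comparison, the paper's proof does not attempt to verify the heap axiom (A1) at all, nor does it build a detailed inverse: it derives the heap directly from the tree, declaring that each edge $\mathbf{x}_j\rightarrow\mathbf{x}_i$ forces the two hypercubes to share a vertex with $\mathbf{x}_j$ below $\mathbf{x}_i$, and then checks (I1)--(I3). Your plan is more ambitious in checking Viennot's axioms and exhibiting mutually inverse maps, which is a reasonable way to firm up the (admittedly terse) published argument, but two pieces are missing beyond the propagation gap. In the inverse direction, ``a linear extension compatible with the planar left-to-right order'' is not canonical enough: the order you must use is the specific one obtained by repeatedly removing the right-most maximal piece (this is the order underlying Proposition \ref{prop:xipj} and Section \ref{sec:ItoII}), and you would still have to verify that the index-based parent (the largest $j<i$ with $(\mathbf{x}_i,\mathbf{x}_j)\in E(k)$) coincides with your geometric ``nearest concurrent piece above.'' As it stands, the proposal identifies the right difficulties but resolves none of them, and its announced strategy for the central one is provably not viable.
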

%%%%%%%%%
\begin{proof}
Let $\mathcal{X}:=(\mathbf{x}_1,\ldots,\mathbf{x}_{n+1})$.
From (2) in Lemma \ref{lemma:sjtj}, we have $\mathbf{x}_{i}\in\mathcal{I}(k)$. Therefore, we have (I1).
From Lemma \ref{lemma:ForX}, the forest $\mathtt{For}(\mathcal{X})$ is a planar rooted tree.
Let $h_{k+1}$ (resp. $h'_{k+1}$) be the $k+1$-th coordinate of $\mathbf{x}_{j}$ (resp. $\mathbf{x}_{i}$).
The condition $\mathbf{x}_{j}\rightarrow\mathbf{x}_{i}$ implies that two pieces $\mathbf{x}_{j}$ 
and $\mathbf{x}_{i}$ shares at least one vertex in $\mathcal{I}(k)$ and $h_{k+1}<h'_{k}$.
This gives the order of pieces $\mathbf{x}_{i}$.
Recall that $\mathtt{For}(\mathcal{X})$ has a unique vertex, the root $\mathbf{x}_{1}$.
Since there is no outgoing edges from $\mathbf{x}_{1}$, there is no piece above $\mathbf{x}_{1}$ 
in the heap. This means that $\mathbf{x}_{1}=(1,\ldots,1)$ is the maximal piece. The condition (I3)
follows. 
We define the $k+1$-th coordinate $h_{k+1}=0$ for the piece $\mathbf{x}_{p}$ 
which has no incoming edges in $\mathtt{For}(\mathcal{X})$.
The $k+1$-th coordinates of other pieces are larger than zero. The condition (I2) follows.
From these observations, $\mathtt{For}(\mathcal{X})$ is bijective to 
a heap on $\mathcal{I}(k)$ with conditions (I1), (I2) and (I3).
\end{proof}

\begin{example}
We have $22$ planar rooted trees for $(k,n)=(3,2)$. Below, we list up all $22$ trees, where
the root vertex $\mathbf{x}_{1}=(1,1,1)$ is omitted.
From Figure \ref{fig:kdim}, we have 
\begin{align*}
&(1,1,1)\rightarrow(1,1,1),\quad (1,1,1)\rightarrow(2,1,1),\quad (1,1,1)\rightarrow(2,2,1),\quad
(1,1,1)\rightarrow(2,2,2), \\
&(2,1,1)\rightarrow(2,1,1),\quad (2,1,1)\rightarrow(1,1,1),\quad
(2,2,1)\rightarrow(1,1,1),\quad (2,2,1)\rightarrow(2,2,1), \\
&(2,2,2)\rightarrow(1,1,1),\quad (2,2,2)\rightarrow(2,2,2),\quad (2,1,1)\rightarrow(2,2,1),
\quad (2,1,1)\rightarrow(2,2,2), \\
&(2,2,1)\rightarrow(2,1,1),\quad (2,2,1)\rightarrow(2,2,2),\quad (2,2,2)\rightarrow(2,1,1),\quad 
(2,2,2)\rightarrow(2,2,1),\\ 
&(3,2,2)\rightarrow(2,1,1),\quad (3,2,2)\rightarrow(2,2,1),\quad (3,3,2)\rightarrow(2,2,1),\quad
(3,2,2)\rightarrow(2,2,2),\\
&(3,3,2)\rightarrow(2,2,2),\quad (3,3,3)\rightarrow(2,2,2).
\end{align*}
In the case of $(n,k)=(3,2)$, all trees are of the form 
$\mathbf{x}_{3}\rightarrow\mathbf{x}_2\rightarrow\mathbf{x}_1$.

Consider a step sequence $\mathfrak{m}:=(8,1,0,0)$ for $(n,k)=(3,3)$. 
Then, we have 
\begin{align*}
\mathbf{x}_1=(1,1,1),\quad \mathbf{x}_2=(2,2,2),\quad \mathbf{x}_3=(3,3,2),\quad \mathbf{x}_4=(2,1,1).
\end{align*}
The planar plane tree is given by 
\begin{align*}
\tikzpic{-0.5}{
\node(0)at (0,0){$\mathbf{x}_3$};
\node(1)at (2,0){$\mathbf{x}_2$};
\node(2)at (4,0){$\mathbf{x}_1$};
\node(3)at (2,-1.5){$\mathbf{x}_4$};
\draw[->](0.east)--(1.west);
\draw[->](1.east)--(2.west);
\draw[->](3.north)--(1.south);
}
\end{align*}
Note that $(\mathbf{x}_3,\mathbf{x}_4)\notin E(k)$.
\end{example}

Let $H_{i}$, $1\le i\le k$, be a heap of type $II$ corresponding to
a Dyck path $D_{i}$. 
By definition, a heap $H_{i}$ contains $n$ dimers.
We can introduce a linear order on dimers by removing a right-most 
maximal piece one-by-one.

\begin{prop}
\label{prop:xipj}
Let $p^{j}_i$, $1\le i\le n$, be a $i$-th dimer from top in the linear order in
the heap $H_{j}$, $1\le j\le k$.
We denote by $L(p^{j}_i)$ the left abscissa of the dimer $p^{j}_i$.
We have
\begin{align}
\label{eq:xipj}
\mathbf{x}_{i}=(L(p^{1}_{i}),L(p^2_{i}),\ldots,L(p^{k}_{i})),
\end{align}
where $\mathbf{x}_i$ is defined by Eq. (\ref{eq:xi}).
\end{prop}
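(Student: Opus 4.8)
The plan is to compute directly the left abscissa $L(p^{j}_{i})$ of the $i$-th dimer of the heap $H_{j}$ in terms of the step sequence $\mathfrak{m}(D_{j})=(M_{j,1},\ldots,M_{j,n+1})$, and then to recognize the resulting expression as a coordinate of $\mathbf{x}_{i}$ via Eq.~(\ref{eq:xij}). First I would recall from Section~\ref{subsec:HGFII} the correspondence between a Dyck path and its heap of type $II$: taking $\mu_{0}=(UD)^{n}$ as the highest Dyck path and $\mu=D_{j}$, the left abscissa of the $(i+1)$-th piece counted from the top equals
\begin{align*}
c_{i}=b_{n+1-i}(D_{j})-b_{n+1-i}(\mu_{0})+1,
\end{align*}
where $b_{\ell}(\mu)$ is the number of right steps to the left of the $\ell$-th up step. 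Since $b_{\ell}(\mu_{0})=\ell-1$ for $\mu_{0}=(UD)^{n}$, this already has the shape ``(row index) minus (horizontal offset of $D_{j}$)''.

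A preliminary point to settle is that for a Dyck path the heap of type $II$ is a chain: condition (\ref{eq:condac}) forces the left abscissa of each piece to lie weakly to the left of the right abscissa of the piece above it, so consecutive dimers overlap and the removal of the right-most maximal piece reproduces exactly the top-to-bottom order. Hence the $i$-th dimer $p^{j}_{i}$ in the linear order is the $i$-th piece from the top, and $\mathbf{x}_{1}=(1,\ldots,1)$ is accounted for by the unique maximal piece, whose left abscissa is $1$ because $M_{j,n+1}=0$.

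Next I would translate $b_{\ell}(D_{j})$ into the step sequence. The $\ell$-th up step of $D_{j}$ sits at horizontal coordinate $b_{\ell}(D_{j})$, and by definition $M_{j,\ell}$ counts the unit boxes lying to its right and to the left of the line $x=n$; thus $M_{j,\ell}=n-b_{\ell}(D_{j})$. Substituting this together with $b_{\ell}(\mu_{0})=\ell-1$ into $c_{i}$ and relabelling the pieces from the top, I expect to obtain
\begin{align*}
L(p^{j}_{i})=i-M_{j,\,n+2-i},
\end{align*}
which is consistent with the maximal piece since $1-M_{j,n+1}=1$. Comparing with the definition $x_{a,i}=i-M_{k+1-a,\,n+2-i}$ in Eq.~(\ref{eq:xij}), this is exactly the coordinate of $\mathbf{x}_{i}=(x_{1,i},\ldots,x_{k,i})$ indexed by $a=k+1-j$, so collecting the left abscissae over the heaps $H_{1},\ldots,H_{k}$ at a fixed level reassembles the tuple $\mathbf{x}_{i}$ of Eq.~(\ref{eq:xi}). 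The reindexing $j\mapsto k+1-j$ is forced by the fact that $D_{1}$ is the highest and $D_{k}$ the lowest of the paths, so that $M_{1,\cdot}\ge\cdots\ge M_{k,\cdot}$ by Lemmas~\ref{lemma:mDi} and~\ref{lemma:Mij}, matching the weak decrease of the coordinates of $\mathbf{x}_{i}$ established in Lemma~\ref{lemma:sjtj}.

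I expect the main obstacle to be purely the bookkeeping of the three competing indexings that meet in the final identity: the top-to-bottom order of the dimers in $H_{j}$, the reversal $\ell\mapsto n+2-i$ relating the step-sequence position to the height in the heap, and the order reversal $j\mapsto k+1-j$ built into Eq.~(\ref{eq:xij}). The geometric content---that $M_{j,\ell}=n-b_{\ell}(D_{j})$ and that the heap is a chain---is short; the delicate part is checking that these substitutions line up consistently across the boundary cases (the maximal piece and the entry $M_{j,n+1}=0$) so that the formula holds uniformly over all levels $i$.
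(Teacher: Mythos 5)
Your core computation is the same as the paper's own proof, just written out in full: the paper also simply combines the step-sequence-to-heap correspondence of Section \ref{subsec:HGFII} with the definition (\ref{eq:xij}), and your chain of identities $b_{\ell}(\mu_{0})=\ell-1$, $M_{j,\ell}=n-b_{\ell}(D_{j})$, hence $L(p^{j}_{i})=i-M_{j,\,n+2-i}$, is exactly the bookkeeping the paper leaves implicit. Your remark on the index reversal is also correct and worth keeping: since $M_{1,\cdot}\ge\cdots\ge M_{k,\cdot}$, your formula makes $(L(p^{1}_{i}),\ldots,L(p^{k}_{i}))$ weakly increasing, while by Lemma \ref{lemma:sjtj} the coordinates of $\mathbf{x}_{i}$ are weakly decreasing; so Eq. (\ref{eq:xipj}) can only be read with the reindexing $j\mapsto k+1-j$ absorbed into it, i.e., $x_{a,i}=L(p^{k+1-a}_{i})$. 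The paper's statement and proof pass over this silently, so flagging it is an improvement on your side, not a defect.

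The one step that genuinely fails as written is your preliminary claim that condition (\ref{eq:condac}) makes the heap a chain, with consecutive dimers overlapping, so that the linear order by removing right-most maximal pieces trivially agrees with the top-to-bottom construction order. Condition (\ref{eq:condac}) only bounds motion to the right, $c_{i}\le c_{i-1}+1$; it allows $c_{i}<c_{i-1}-1$. Concretely, for $n=3$ and the Dyck path $UDDDUU$ (step sequence $(3,0,0,0)$), the left abscissae from top to bottom are $1,2,3,1$: the bottom dimer $[1,2]$ is disjoint from the dimer $[3,4]$ just above it in the list, the heap is not a chain, and after two removals both remaining dimers are simultaneously maximal. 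The conclusion you need is still true, but only because of the right-most tie-breaking in the definition of the linear order: if a piece with larger construction index is not strictly to the left of an earlier remaining piece, then either it overlaps that piece (and so lies below it), or, since abscissae increase by at most one per step down the list, some intermediate piece sits at an abscissa adjacent to its own and, having smaller index, was dropped later and lies above it; either way the later piece is not maximal. Hence among simultaneously maximal pieces the right-most is always the earliest in construction order, and induction on removals identifies the linear order with the top-to-bottom order. You should replace the chain claim with this argument; notably, the paper's own proof does not address the point at all.
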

%%%%%%%%%
\begin{proof}
Equation (\ref{eq:mceil}) gives a step sequence $\mathbf{m}(D_{j})$ for a 
Dyck paths $D_{j}$.
By the correspondence between a lattice path and a heap of type $II$ 
(see Figure \ref{fig:corLGV}),
a step sequence gives the position of pieces from the left end.
By combining these observations with the definition of $x_{i,j}$ in 
Eq. (\ref{eq:xij}), Eq. (\ref{eq:xipj}) follows.
\end{proof}

The next corollary is a direct consequence of Propositions \ref{prop:bijprt} and \ref{prop:xipj}.
\begin{cor}
\label{cor:bij}
A heap in $\mathcal{H}(\mathcal{I}(k))$ is bijective to the following combinatorial objects.
\begin{enumerate}
\item A $(1,k)$-Dyck path $\mu$, equivalently a $k$-tuple of Dyck paths $\{D_1,\ldots, D_{k}\}$.
\item A heap of type $II$ for the path $\mu$.
\item A $k$-tuple of heaps of type $II$ for the Dyck paths $\{D_1,\ldots,D_{k}\}$.
\item A planar rooted tree $\mathtt{For}(\mathcal{X})$.
\end{enumerate}
\end{cor}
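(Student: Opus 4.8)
The plan is to assemble the corollary as a chain of bijections pivoting on the planar rooted tree $\mathtt{For}(\mathcal{X})$, each link of which has already been supplied, so that the single object $\mathcal{X}=(\mathbf{x}_1,\ldots,\mathbf{x}_{n+1})$ serves as the common bookkeeping data underlying all four descriptions. First I would invoke Proposition \ref{prop:bijprt} verbatim: it gives directly the bijection between a heap in $\mathcal{H}(\mathcal{I}(k))$ and a planar rooted tree $\mathtt{For}(\mathcal{X})$, settling the equivalence with object $(4)$ and fixing $\mathcal{X}$.

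Next I would trace $\mathcal{X}$ back to a $(1,k)$-Dyck path. By Eqs.~(\ref{eq:xij}) and (\ref{eq:xi}), $\mathcal{X}$ is recovered from the $k$-tuple of step sequences $\{\mathfrak{m}(D_i)\}_{i=1}^{k}$, and conversely each $M_{i,j}$, hence each $\mathfrak{m}(D_i)$, is read off from the coordinates by inverting $x_{i,j}=j-M_{k+1-i,n+2-j}$. The recursive $k$-tuple decomposition defined through Eq.~(\ref{eq:mceil}) is invertible, since summing its ceiling parts recovers the original step sequence, $\mathfrak{m}(\mathcal{D})=\sum_{i=1}^{k}\mathfrak{m}(D_i)$; and $\mathfrak{m}(\mathcal{D})\leftrightarrow\mathcal{D}$ is the natural bijection already noted, following \cite{Shi21b}. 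Composing, $\mathcal{X}$ is in bijection with $\mathcal{D}$, equivalently with the $k$-tuple $\{D_1,\ldots,D_k\}$, which is object $(1)$.

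For objects $(2)$ and $(3)$ I would use the two realizations of type $II$ heaps established earlier. The bijection between a $(1,k)$-Dyck path $\mu$ and its heap of type $II$, via $\mathfrak{a}(\mu_0)$ and $\mathbf{c}$ of Section \ref{subsec:HGFII}, composed with the previous paragraph, yields object $(2)$. For object $(3)$, each Dyck path $D_i$ is a $(1,1)$-Dyck path and so corresponds to a heap of type $II$, namely $H_i$ consisting of $n$ dimers; thus the $k$-tuple $\{D_i\}$ corresponds to the $k$-tuple $\{H_i\}$, and Proposition \ref{prop:xipj} identifies this data with $\mathcal{X}$ by reading $\mathbf{x}_i=(L(p^1_i),\ldots,L(p^k_i))$ off the left abscissae of the dimers.

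The genuine content of the argument is not any single bijection, each of which is already proved, but the verification that these maps are mutually consistent: that the vectors $\mathbf{x}_i$ produced from $\mathcal{D}$ through the step-sequence decomposition and Eq.~(\ref{eq:xi}) coincide with the $\mathbf{x}_i$ assembled from the dimer abscissae of the $k$-tuple $\{H_i\}$. This coincidence is exactly what Proposition \ref{prop:xipj} certifies, so the only real obstacle is to confirm that the common object $\mathcal{X}$ indexes all four descriptions simultaneously. Once that is in hand, composing the established bijections closes the loop and completes the proof.
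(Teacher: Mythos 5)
Your proposal is correct and follows essentially the same route as the paper, which derives the corollary directly from Propositions \ref{prop:bijprt} and \ref{prop:xipj} together with the already-established bijections (path $\leftrightarrow$ step sequence, step sequence $\leftrightarrow$ $k$-tuple via Eq.~(\ref{eq:mceil}), and path $\leftrightarrow$ type $II$ heap via $\mathfrak{a}(\mu_0)$ and $\mathbf{c}$); your extra check that summing the ceiling parts inverts the decomposition, and that $\mathcal{X}$ consistently indexes all four objects, is exactly the implicit content of the paper's one-line justification.
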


Corollary \ref{cor:bij} gives a bijection among combinatorial objects.
We also have a correspondence at the level of generating functions.

Let $\mathcal{P}(k;n)$ be the set of planar rooted trees consisting of $\{\mathbf{x}_i\}_{i=1}^{n+1}$.
The valuation of a planar rooted tree  $T$ is defined to be
\begin{align*}
v(T):=\sum_{i,j}x_{i,j}-(n+1)k,
\end{align*}
where $x_{i,j}$ is given by Eq. (\ref{eq:xij}).
We define the generating function $\mathcal{F}(k;x,p)$ by 
\begin{align*}
\mathcal{F}(k;x,p):=\sum_{n\ge0}x^{n}\sum_{T\in\mathcal{P}(k;n)}p^{v(T)}.
\end{align*}

\begin{prop}
We have $\mathcal{F}(k;x,p)=G^{(k)}(p^{-1}x,p)$ where the generating function 
$G^{(k)}(x,p)$ is given by Eq. (\ref{eq:Gkxp}).
\end{prop}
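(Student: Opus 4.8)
The plan is to reduce the identity to the area interpretation of $G^{(k)}$ provided by Proposition \ref{prop:GinArea}, and then to match the tree valuation $v(T)$ with the area statistic through the bijections collected in Corollary \ref{cor:bij}. First I would rewrite the right-hand side: substituting $x\mapsto p^{-1}x$ in Eq.~(\ref{eq:GinArea}) and using $(p^{-1}x\cdot p)^{n}=x^{n}$ gives
\begin{align*}
G^{(k)}(p^{-1}x,p)=\sum_{n\ge0}x^{n}\sum_{\mu\le\mu_{0}(n)}p^{\mathrm{Area}(\mu)}.
\end{align*}
Since $\mathcal{F}(k;x,p)=\sum_{n\ge0}x^{n}\sum_{T\in\mathcal{P}(k;n)}p^{v(T)}$ by definition, it is enough to prove the coefficientwise identity $\sum_{T\in\mathcal{P}(k;n)}p^{v(T)}=\sum_{\mu\le\mu_{0}(n)}p^{\mathrm{Area}(\mu)}$ for each fixed $n$. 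By Corollary \ref{cor:bij} a tree $T\in\mathcal{P}(k;n)$ corresponds bijectively to a $(1,k)$-Dyck path $\mu$ of size $n$, equivalently to a $k$-tuple of Dyck paths $\{D_1,\dots,D_k\}$; hence the whole statement follows once I show that this bijection is weight preserving, i.e. $v(T)=\mathrm{Area}(\mu)$.

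The core of the argument is this weight identity, which I would establish in two steps. By Proposition \ref{prop:xipj} the components of the vertices $\mathbf{x}_i$ are the left abscissae of the dimers of the type-$II$ heaps $H_1,\dots,H_k$ attached to $D_1,\dots,D_k$, so that
\begin{align*}
v(T)=\sum_{i,j}x_{i,j}-(n+1)k=\sum_{j=1}^{k}\sum_{i}\bigl(L(p^{j}_{i})-1\bigr).
\end{align*}
For a heap of type $II$ the maximal piece has left abscissa $1$, and the sum of $L(p^{j}_{i})-1$ over its pieces counts exactly the empty sites recorded to the left of the pieces, which by the correspondence of Figure \ref{fig:corLGV} is the number of unit boxes of the Dyck path $D_j$; thus $\sum_{i}(L(p^{j}_{i})-1)=\mathrm{Area}(D_j)$ and $v(T)=\sum_{j=1}^{k}\mathrm{Area}(D_j)$.

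It then remains to identify $\mathrm{Area}(\mu)$ with $\sum_{j}\mathrm{Area}(D_j)$. Here I would use that the ceiling decomposition of Eq.~(\ref{eq:mceil}) is telescoping: because $\mathfrak{m}(D_j)$ is the successive difference of the leftover step sequences, one has $\sum_{j=1}^{k}\mathfrak{m}(D_j)=\mathfrak{m}(\mathcal{D})$. Since the area of a path equals the sum of the entries of its step sequence, this additivity gives $\mathrm{Area}(\mu)=\sum_{j}\mathrm{Area}(D_j)=v(T)$, as desired, and summing over $n$ then yields $\mathcal{F}(k;x,p)=G^{(k)}(p^{-1}x,p)$. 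The step I expect to require the most care is the middle one: establishing cleanly that for a type-$II$ heap the excess $\sum_i(L(p^{j}_i)-1)$ of left abscissae equals $\mathrm{Area}(D_j)$ through the step-sequence reading of Figure \ref{fig:corLGV}, and verifying that the normalization $-(n+1)k$ in $v(T)$ together with the shift $x\mapsto p^{-1}x$ in Proposition \ref{prop:GinArea} leave no residual power of $p$ once the additivity of the area is in place.
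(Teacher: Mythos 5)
Your proposal is correct and follows essentially the same route as the paper: both reduce the identity via Proposition \ref{prop:GinArea} to the weight-preservation statement $v(T)=\mathrm{Area}(\mu)$ under the bijection of Corollary \ref{cor:bij}, and both ultimately rest on the additivity of the ceiling decomposition (\ref{eq:mceil}). The only real difference is organizational: you factor the computation through per-path areas, using Proposition \ref{prop:xipj} to write $v(T)=\sum_{j}\sum_{i}\bigl(L(p^{j}_{i})-1\bigr)=\sum_{j}\mathrm{Area}(D_{j})$, whereas the paper computes globally with step sequences, deriving $\sum_{j}|\mathfrak{m}(D_{j})|=\frac{n(n+1)k}{2}-v(T)$ directly from the definition of $x_{i,j}$ and then identifying $\frac{n(n+1)k}{2}$ as the total number of unit boxes between the lowest and highest $(1,k)$-Dyck paths. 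One justification in your final step is stated backwards and needs a one-line repair: the entry $m_{i}$ counts the boxes to the right of the $i$-th up step, i.e.\ the boxes \emph{below} the path, so the sum of step-sequence entries is the complement of the area statistic, namely $\mathrm{Area}(D_{j})=\frac{n(n+1)}{2}-|\mathfrak{m}(D_{j})|$ and $\mathrm{Area}(\mu)=\frac{kn(n+1)}{2}-|\mathfrak{m}(\mathcal{D})|$. The additivity $\mathrm{Area}(\mu)=\sum_{j}\mathrm{Area}(D_{j})$ that you assert therefore does hold, but only because the telescoping identity $\sum_{j}|\mathfrak{m}(D_{j})|=|\mathfrak{m}(\mathcal{D})|$ combines with the matching of the complementary constants, $k\cdot\frac{n(n+1)}{2}=\frac{kn(n+1)}{2}$ --- exactly the constant the paper tracks explicitly in its proof. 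With that correction your argument is complete and, modulo the repackaging through $\mathrm{Area}(D_{j})$, coincides with the paper's.
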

%%%%%%%%%%
\begin{proof}
Let $P$ be a $(1,k)$-Dyck path of size $n$ and $\mathbf{m}(P):=(m_1,\ldots,m_{n+1})$ be 
the step sequence of $P$. 
We have $k$ Dyck paths $D_{i}$, $1\le i\le k$, by definition (\ref{eq:mceil}).
We denote by $|\mathbf{m}(P)|$ the sum of the entries of the step sequence for $P$.
By definitions (\ref{eq:mceil}) of $\mathbf{m}(D_{i})$, we have 
\begin{align*}
\sum_{i=1}^{k}|\mathbf{m}(D_{i})|=\genfrac{}{}{}{}{(n+1)(n+2)k}{2}-\sum_{i,j}x_{i,j},
\end{align*} 
where $x_{i,j}$ is defined by Eq. (\ref{eq:xij}) via the step sequences of $D_{i}$, $1\le i\le k$.
Then, by definition of the valuation of a planar tree we have
\begin{align*}
\sum_{i=1}^{k}|\mathbf{m}(D_{i})|=\genfrac{}{}{}{}{n(n+1)k}{2}-v(T),
\end{align*}
where $T$ is a planar rooted tree constructed from $D_{i}$, $1\le i\le k$.
Note that the number $n(n+1)k/2$ is the number of unit boxes between the lowest $(1,k)$-Dyck path 
and the highest $(1,k)$-Dyck path.
This implies that 
\begin{align*}
\mathcal{F}(k;x,p)=\sum_{n\ge0}x^{n}\sum_{\mu\le\mu_{0}}p^{\mathrm{Area}(\mu)},
\end{align*}
where $\mu$ is a $(1,k)$-Dyck path and $\mu_{0}=(UD^{k})^{n}$.
From Eq. (\ref{eq:GinArea}) in Proposition \ref{prop:GinArea}, we have $\mathcal{F}(k;x,p)=G^{(k)}(p^{-1}x,p)$.
\end{proof}

\section{Correspondence between two types of heaps for generalized Dyck paths}
\label{sec:ItoII}
Fix a positive integer $b\ge1$.
We have two types of heaps for a $(1,b)$-Dyck path studied in 
Sections \ref{sec:1b} and \ref{sec:GDGF}.
Recall that the first (resp. second) heaps are called type $I$ (resp. type $II$).
In this section, we construct a bijection between heaps of type $I$ and of type $II$.
Let $\mathcal{H}^{I}$ and $\mathcal{H}^{II}$ be heaps of type $I$ and type $II$ 
for a generalized $(1,b)$-Dyck path.
We first give the map $\kappa:\mathcal{H}^{I}\mapsto\mathcal{H}^{II}$, and give the 
inverse of $\kappa$.

Suppose that $\mathcal{H}^{I}$ consists of $r$ staircases $\mathcal{H}_{i}^{I}$,
$1\le i\le r$.
Recall that each staircase corresponds to a segment $[l_{i},r_{i}]$.
By the definition of heaps of type $I$, we always have 
$r_{i}=1+bp(i)$ for some positive integer $p(i)$.
Let $L_{i}=bp(i)-l_{i}$ and $R_{i}=r_{i}$.
Note that $p(i)$ takes value in $[1,n]$.
We put a piece $[L_{i}+1,L_{i}+b+1]$ at height $n+1-p(i)$ in $\mathcal{H}^{II}$ for 
all $i\in[1,r]$.
Recall that we need $n+1$ pieces of length $b$ in $\mathcal{H}^{II}$.
We need to fix other $n+1-r$ pieces in $\mathcal{H}^{II}$.
From the condition (H1), we put a piece $[1,b+1]$ at height $n+1$.
We have $r$ positive integers $p(i)\in[1,n]$, $1\le i\le r$. 
We extend this by adding $p(0)=0$.
The construction of a heap of type $I$ implies that all $r(i)$, $1\le i\le r$,
are distinct. 
In terms of $p(i)$, this means that $p(i)$, $0\le i\le r$, is strictly increasing.
For an integer $p(i)<q<p(i+1)$, we put 
a piece of length $b$ whose left abscissa is $L_{i}+b(q-p(i))+1$ at height $n+1-q$ in $\mathcal{H}^{II}$. 
This definition gives a unique heap of type $II$.

\begin{lemma}
A map $\kappa$ sends a heap $\mathcal{H}^{I}$ to a heap satisfying the conditions (H1) and (H2).
\end{lemma}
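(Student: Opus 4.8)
The plan is to verify conditions (H1) and (H2) separately, with essentially all the work going into (H2). Condition (H1) is immediate from the definition of $\kappa$: the three kinds of pieces it produces --- the pieces $[L_i+1,L_i+b+1]$ coming from the staircases, the top piece $[1,b+1]$, and the filler pieces with left abscissa $L_i+b(q-p(i))+1$ --- all have length exactly $b$. For (H2) I would first note that the top piece $[1,b+1]$ lies at the highest level $n+1$, so nothing sits above it; it is therefore maximal and has left abscissa $1$. What remains is to show that it is the \emph{unique} maximal piece.

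To prove uniqueness I would invoke the characterization of condition (F2) recorded just after Eq.~(\ref{eq:condac}): the heap has a unique maximal piece precisely when every piece below the top has its left abscissa weakly to the left of the right abscissa of the piece placed directly above it, i.e.\ the two overlap. Since $\kappa$ places exactly one piece at each height $q\in\{0,1,\ldots,n\}$ --- the top piece at $q=0$, the staircase pieces at $q=p(i)$, and fillers at the remaining $q$ --- it suffices to show the piece at height $q$ overlaps the piece at height $q-1$ for every $q\ge 1$. Setting $L_0:=0$ so that the top piece reads $[L_0+1,L_0+b+1]$, I would record the left abscissa at each $q$ and the right abscissa of the piece directly above, splitting into the cases: a filler below a filler of the same index, a filler below the staircase of its own index, and a staircase piece below a piece of the previous index.

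The first two cases are routine and give equality of the lower left abscissa and the upper right abscissa (the pieces just touch), because consecutive fillers, and the staircase of a fixed index, step the left abscissa rightward by exactly $b$, matching the length $b$ of a piece. The one case needing a genuine inequality is when the staircase piece of index $i+1$, at height $q=p(i+1)$, sits on the piece of index $i$; here the condition reduces to $l_i\le l_{i+1}$, and this is the main step. I would deduce it from the non-nesting condition (B2) together with the indexing $p(0)=0<p(1)<\cdots<p(r)$, equivalently $r_i=1+bp(i)$ strictly increasing: were $l_{i+1}<l_i$ while $r_i<r_{i+1}$, then $l_{i+1}\le l_i<r_i\le r_{i+1}$ would nest $[l_i,r_i]$ inside $[l_{i+1},r_{i+1}]$, contradicting (B2). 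Hence $l_i<l_{i+1}$, so the inequality holds with room to spare; the boundary case $i=0$ gives $l_1\ge 0$, which is trivial. Assembling the cases shows every non-top piece overlaps the piece above it, so the top piece is the unique maximal piece, and (H2) follows. I expect the monotonicity $l_i<l_{i+1}$ extracted from (B2) to be the crux, the remaining estimates being bookkeeping on the abscissae.
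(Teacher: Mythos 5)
Your proposal is correct and follows essentially the same route as the paper's proof: (H1) by construction, and (H2) by noting that the filler pieces merely touch the piece above them (hence are never maximal), while the staircase pieces are handled by the monotonicity $l_i<l_{i+1}$ extracted from the non-nesting condition (B2) --- exactly the fact ``$l_1<l_2$ implies $r_1<r_2$'' that the paper invokes. Your version simply makes explicit the abscissa bookkeeping and the overlap criterion from Eq.~(\ref{eq:condac}) that the paper leaves implicit.
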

%%%%%%%%%%%%%
\begin{proof}
By construction of $\kappa$, it is obvious that 
a heap $\kappa(\mathcal{H}^{I})$ satisfies the condition (H1).
We show that the condition (H2) holds.
When $p(i)<q<p(i+1)$, the heap at height $n+1-q$-th cannot be a maximal piece
by construction.
Therefore, it is enough to show that a piece corresponding $p(i)$-th staircase
is not a maximal piece.
Recall that the left abscissa of a segment corresponding to the $i$-th 
staircase $\mathcal{H}_{i}^{I}$ is given by $bp(i)-l_{i}+1$. 
In a heap of type $I$, if two segments $[l_1,r_1]$ and $[l_2,r_2]$ satisfies 
$l_1<l_2$, then we have $r_1<r_2$. 
By combining these facts together, the left abscissa of a piece in $\mathcal{H}^{II}$ 
corresponding to $i$-th staircase $\mathcal{H}_{i}^{I}$ is left to the one of a piece 
at height $n+2-p(i)$.
This implies that a piece at height $n+1-p(i)$ is not a maximal piece. 
As a summary, the condition (H2) follows.
\end{proof}

\begin{example}
Consider a heap $\mathcal{H}^{I}$ of type $I$ with $(n,b)=(2,2)$, which 
consists of a single piece $[1,3]$ of length two.
Since $3=1+2\cdot 1$, we have $p(1)=1$. From this, we have 
a segment $[2,4]$ at height $2$.
Obviously, we have a segment $[1,3]$ at height $3$.
The segment at height $1$ is given by $[4,6]$.
In total, we have three pieces $[1,3]$, $[2,4]$ and $[4,6]$ 
from top to bottom. 
\end{example}

The inverse map $\kappa^{-1}:\mathcal{H}^{II}\mapsto\mathcal{H}^{I}$ is given 
as follows.
We first define a linear order on pieces in the heap $\mathcal{H}^{II}$.
Let $n+1$ be the number of pieces in the heap $\mathcal{H}^{II}$ of type $II$ 
and $p_{i}$, $1\le i\le n+1$, be ordered pieces such that $p_1>p_2>\ldots>p_{n+1}$.
We recursively define $p_{i}$ starting from $p_{1}$.
Since we have a unique maximal piece in a heap by construction (see the condition (H2)), 
we define the piece $p_{1}$ as this maximal piece.
We remove the piece $p_{1}$ from $\mathcal{H}^{II}$, and we define 
$p_{2}$ as the right-most maximal piece of $\mathcal{H}^{II}\setminus\{p_{1}\}$.
Similarly, $p_{3}$ is defined as the right-most maximal piece in 
$\mathcal{H}^{II}\setminus\{p_{1},p_{2}\}$.
We recursively define $p_{i}$ by removing the right-most maximal piece one-by-one
from $\mathcal{H}^{II}$.
Each piece $p_{i}$ corresponds to a segment $[l_i,r_i]$ such that 
$r_i-l_i=b$.
Let $\mathcal{I}(\mathcal{H}^{II})$ be the set of integers:
\begin{align*}
\mathcal{I}(\mathcal{H}^{II}):=
\{1\le i\le n+1 : r_{i-1}\neq l_{i} \},
\end{align*}
where we define $r_{0}:=1$.
When $r_{i-1}=l_{i}$, we have no peak in a heap of type $I$ at height $n+1-i$
since the length of a piece is $b$ and the difference of the left abscissae 
of pieces at height $n+2-i$ and $n+1-i$ in the heap of type $II$ is also $b$.
From these, it is clear that $|\mathcal{I}(\mathcal{H}^{II})|=r$ where 
$r$ is the number of staircases in a heap of type $I$.	
We define the set of segments $\mathcal{S}(\mathcal{H}^{II})$ 
constructed from the $n+1$ pieces $[l_i,r_i]$ by 
\begin{align*}
\mathcal{S}(\mathcal{H}^{II}):=
\{[L_i,R_i] | L_i= b(i-1)-l_{i}+1, R_i=1+b(i-1) , i\in\mathcal{I}(\mathcal{H}^{II}) \}.
\end{align*}
The set $\mathcal{S}(\mathcal{H}^{II})$ gives the segments for $\mathcal{H}^{I}$.
Each segment $[L_i,R_i]$ in $\mathcal{H}^{I}$ corresponds to a piece $p_i$ in $\mathcal{H}^{II}$.
The linear order of the segments $[L_i,R_i]$ is the reversed order of the 
pieces $p_{i}$.

Further, when $[L_i,R_i]\in\mathcal{S}(\mathcal{H}^{II})$, we have $L_{i}<R_i$ and 
$R_{i}=1+b(p-1)$ for some $p$.
This implies that a staircase ends at position $R_{i}$, which 
is compatible with the definition of heaps for $\mathcal{H}^{I}$.
It is easy to see that the value $L_{i}$ gives the distance of a peak 
from $x=bn$, and equivalently the left abscissa of the corresponding 
staircase in a heap of type $I$.

\begin{example}
We consider three heaps of type $II$ with $(n,k)=(2,2)$.
\begin{enumerate}
\item A heap consisting of three pieces $[1,3]$, $[2,4]$ and $[4,6]$ from top to bottom.
The set $\mathcal{I}$ is $\{2\}$ and we have a segment $[1,3]$ in a heap of type $I$.
\item
A heap consisting of $[1,3]$, $[3,5]$, $[4,6]$.
from top to bottom. Then, we have a segment $[1,5]$ in a heap of type $I$ 
since the set $\mathcal{I}$ is $\{3\}$.
\item 
A heap consisting of $[1,3]$, $[2,4]$ and $[3,5]$.
We have $\mathcal{I}=\{2,3\}$ and two segments $[1,3]$ and $[2,5]$.
By reversing the order of segments, we have a heap of type $I$ consisting 
of $[2,5]$ and $[1,3]$ from top to bottom.
\end{enumerate}
\end{example}

By constructions of $\kappa$ and $\kappa^{-1}$, we have a bijection between 
a staircase in a heap of type $I$ and a piece in a heap of type $II$.
It is obvious that we have $\kappa\circ\kappa^{-1}$ and $\kappa^{-1}\circ\kappa$
give the identity map on heaps.

As a summary, the next proposition follows from the explicit constructions of the map $\kappa$ and 
its inverse.
\begin{prop}
The map $\kappa$ is a bijection between $\mathcal{H}^{I}$ and $\mathcal{H}^{II}$.
\end{prop}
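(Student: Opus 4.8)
The plan is to establish that $\kappa$ and $\kappa^{-1}$, both constructed explicitly above, are genuine two-sided inverses of each other; combined with the preceding lemma (which already shows that $\kappa$ lands in $\mathcal{H}^{II}$), this yields the bijection. First I would check that $\kappa^{-1}$ is well-defined, i.e. that the segments $[L_i,R_i]$ it produces satisfy the conditions (B1), (B2) and (B3) for a heap of type $I$. Condition (B3) is immediate from $R_i=1+b(i-1)$, which forces the right abscissa into the admissible set; condition (B1) holds because the recovered size $R_i-L_i=l_i$ of each staircase is dictated by its right abscissa exactly as required; and (B2) is inherited from the non-nesting structure of $\mathcal{H}^{II}$ guaranteed by (H2), since the map on coordinates is order-preserving.

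The core of the argument is a coordinate computation showing $\kappa^{-1}\circ\kappa$ and $\kappa\circ\kappa^{-1}$ are the identity. For $\kappa^{-1}\circ\kappa$ I would start from a staircase $[l_i,r_i]$ of $\mathcal{H}^{I}$ with $r_i=1+bp(i)$. Under $\kappa$ this becomes the piece at height $n+1-p(i)$, equivalently the piece of index $j=p(i)+1$ in the top-to-bottom ordering (where index $j$ sits at height $n+2-j$), with left abscissa $bp(i)-l_i+1$. Applying $\kappa^{-1}$ and using $b(j-1)=bp(i)$, one recovers $R_j=1+b(j-1)=r_i$ and $L_j=b(j-1)-(bp(i)-l_i+1)+1=l_i$, so the original staircase is returned. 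The reverse composition is verified symmetrically, tracking a piece of $\mathcal{H}^{II}$ through $\kappa^{-1}$ and back.

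The step I expect to require the most care is matching the index set $\mathcal{I}(\mathcal{H}^{II})$ with the actual staircases, i.e. verifying that the ``filler'' pieces introduced by $\kappa$ for heights $p(i)<q<p(i+1)$ are precisely the pieces discarded by $\kappa^{-1}$. The key is that each filler is aligned with the piece directly above it: a filler at height $n+1-q$ has left abscissa $L_i+b(q-p(i))+1$, and a direct computation shows this equals the right abscissa of the piece immediately above it at height $n+2-q$, whether that piece is another filler or the image of the $p(i)$-th staircase. Hence $r_{j-1}=l_j$ and $j\notin\mathcal{I}(\mathcal{H}^{II})$, so fillers are correctly dropped. Conversely, the pieces coming from staircases satisfy $r_{j-1}\neq l_j$ precisely because the right abscissae $r_i=1+bp(i)$ of distinct staircases are distinct, equivalently the $p(i)$ are strictly increasing. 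Once this alignment dichotomy is established, both compositions collapse to the identity and the proposition follows.
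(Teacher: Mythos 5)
Your overall strategy coincides with the paper's: the paper's own proof merely asserts that the explicit constructions of $\kappa$ and $\kappa^{-1}$ compose to the identity, and your coordinate computations ($R_j=1+b(j-1)=r_i$ and $L_j=b(j-1)-(bp(i)-l_i+1)+1=l_i$ for a staircase of index $j=p(i)+1$, and the alignment $r_{j-1}=l_j$ for filler pieces) correctly supply the details the paper leaves implicit. However, the justification you give for the converse half of your ``alignment dichotomy'' is wrong, and this is exactly the step where the domain condition (B2) must enter. You claim that a piece coming from a staircase satisfies $r_{j-1}\neq l_j$ ``precisely because the right abscissae $r_i=1+bp(i)$ of distinct staircases are distinct, equivalently the $p(i)$ are strictly increasing.'' Tracking coordinates, the piece of index $j=p(i)+1$ has $l_j=bp(i)-l_i+1$, while the piece directly above it (whether a filler or the image of the $(i-1)$-th staircase, with the convention $l_0=0$ covering the top piece $[1,b+1]$) has $r_{j-1}=bp(i)-l_{i-1}+1$; hence $r_{j-1}=l_j$ if and only if $l_{i-1}=l_i$. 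Distinctness of the $p(i)$ does not rule this out: take $b=2$, $n=2$ and the two segments $[2,3]$ and $[2,5]$, with $p(1)=1\neq2=p(2)$ but equal left abscissae. Then $\kappa$ produces the pieces $[1,3]$ and $[3,5]$ at consecutive heights, $\kappa^{-1}$ discards $[3,5]$ as a filler (since $r_2=3=l_3$, the index $3$ is not in $\mathcal{I}(\mathcal{H}^{II})$), and $\kappa^{-1}\circ\kappa$ loses a staircase, so as stated your argument does not close.

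What saves the proposition is that this configuration is excluded from $\mathcal{H}^{I}$ precisely by the non-nesting condition (B2): as the paper observes, (B2) forces the left abscissae $l_i$ of distinct staircases to be distinct, and indeed strictly increasing along with the $r_i$. Inserting this one line --- by (B2), $l_{i-1}\neq l_i$, hence $r_{j-1}\neq l_j$ and $j\in\mathcal{I}(\mathcal{H}^{II})$ --- repairs the proof; the strict increase of the $p(i)$ is needed only so that the height assignment $j=p(i)+1$ is injective, not for the dichotomy itself. The remainder of your proposal is sound: the well-definedness of $\kappa^{-1}$ via $R_i\equiv1\pmod{b}$ (which gives (B1) and (B3), together with $L_i\ge1$), the inheritance of (B2) from the overlap condition $l_{i'}\le r_{i'-1}$ combined with $i'\in\mathcal{I}(\mathcal{H}^{II})$, and both composite computations are correct, and amount to a faithful, fully detailed version of the paper's terse proof.
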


In Figure \ref{fig:bijGD}, we summarized the relation among heaps for 
generalized Dyck paths.
\begin{figure}[ht]
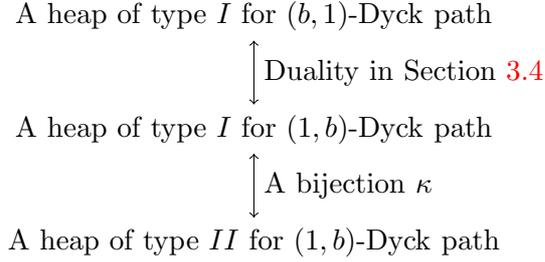

\tikzpic{-0.5}{
\node(1)at(0,1.5){A heap of type $I$ for $(b,1)$-Dyck path};
\node(0)at (0,0){A heap of type $I$ for $(1,b)$-Dyck path};
\node(-1)at(0,-1.5){A heap of type $II$ for $(1,b)$-Dyck path};
\draw[<->](1.south) to node[anchor=west]{Duality in Section \ref{sec:dual1bb1}}(0.north);
\draw[<->](0.south) to node[anchor=west]{A bijection $\kappa$}(-1.north);
}
\caption{Heaps for generalized Dyck paths.}
\label{fig:bijGD}
\end{figure}

\begin{example}
We have twelve heaps for $(1,k)$-Dyck paths of size $n$ with $(n,k)=(2,2)$.
The heaps of type $I$ in Figure \ref{fig:HPk2} respectively correspond to those of type $II$ in Figure \ref{fig:k2} by 
the bijection $\kappa$.
\end{example}

\section{Symmetric Dyck paths}
\label{sec:symDP}
In this section, we study symmetric Dyck paths and their relation to
heaps.
We first introduce a set of heaps with some conditions, and study 
the generating functions.
Then, we give two interpretations of these generating functions
in terms of statistics on symmetric Dyck paths and on Dyck paths respectively.

\subsection{Heaps of pieces with monomers and dimers}
The heaps for symmetric Dyck paths are defined on the set $P=\mathbb{N}_{\ge1}$ 
and pieces of $E$ are monomers and dimers.
We consider the set of heaps $H$ such that
\begin{enumerate}[(J1)]
\item A piece is a dimer or a monomer. In other words, the size of a piece
is one or zero. 
\item The abscissa of a monomer is one.
\item A heap $H$ has a unique maximal piece, and its left abscissa is one.
\end{enumerate}
The condition (J3) implies that a heap $H$ is a semi-pyramid since 
the maximal piece is placed at the left-most column.
Let $N(d)$ and $N(m)$ be the number of dimers and monomers in $H$.
We define the size of $H$ as $2N(d)+N(m)$.
The heaps satisfying the conditions (J1), (J2) and (J3) are a variant 
of heaps of type $II$ studied in Section \ref{sec:LPGF}.
The difference is that we consider two types of pieces, dimers and monomers.

In Figure \ref{fig:symn4}, we list up all heaps of size $4$ which 
satisfy the conditions (J1), (J2) and (J3).
Since the sizes of a monomer and a dimer is one and two respectively, 
the number of monomers in a heap is the same as the size modulo two.
Later, we will construct a bijection between heaps satisfying (J1) to (J3) 
and symmetric Dyck paths. 
%%%%%%%%%%%%%%%%%%
\begin{figure}[ht]
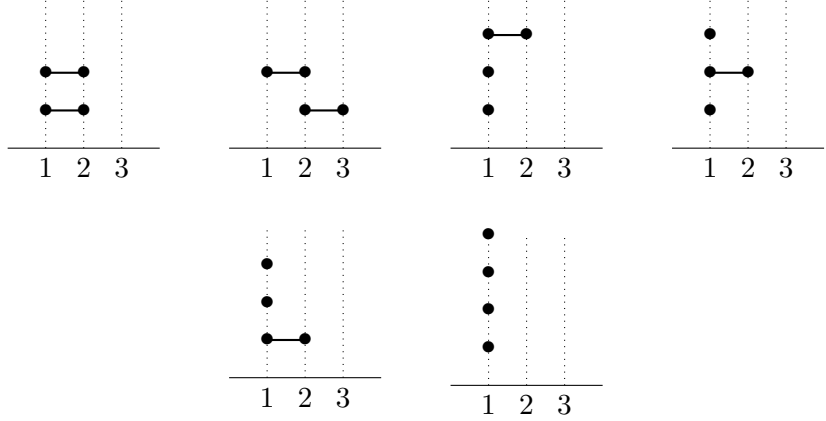

\tikzpic{-0.5}{[scale=0.5]
\draw(0,0)--(4,0);
\foreach \x in {1,2,3}{
\draw[dotted](\x,0)node[anchor=north]{$\x$}--(\x,4);
}
\draw[thick](1,2)node{$\bullet$}--(2,2)node{$\bullet$}(1,1)node{$\bullet$}--(2,1)node{$\bullet$};
}
\quad
\tikzpic{-0.5}{[scale=0.5]
\draw(0,0)--(4,0);
\foreach \x in {1,2,3}{
\draw[dotted](\x,0)node[anchor=north]{$\x$}--(\x,4);
}
\draw[thick](1,2)node{$\bullet$}--(2,2)node{$\bullet$}(2,1)node{$\bullet$}--(3,1)node{$\bullet$};
}
\quad
\tikzpic{-0.5}{[scale=0.5]
\draw(0,0)--(4,0);
\foreach \x in {1,2,3}{
\draw[dotted](\x,0)node[anchor=north]{$\x$}--(\x,4);
}
\draw[thick](1,3)node{$\bullet$}--(2,3)node{$\bullet$};
\draw (1,2)node{$\bullet$}(1,1)node{$\bullet$};
}
\quad
\tikzpic{-0.5}{[scale=0.5]
\draw(0,0)--(4,0);
\foreach \x in {1,2,3}{
\draw[dotted](\x,0)node[anchor=north]{$\x$}--(\x,4);
}
\draw[thick](1,2)node{$\bullet$}--(2,2)node{$\bullet$};
\draw(1,3)node{$\bullet$}(1,1)node{$\bullet$};
}
\\[11pt]
\tikzpic{-0.5}{[scale=0.5]
\draw(0,0)--(4,0);
\foreach \x in {1,2,3}{
\draw[dotted](\x,0)node[anchor=north]{$\x$}--(\x,4);
}
\draw[thick](1,1)node{$\bullet$}--(2,1)node{$\bullet$};
\draw(1,2)node{$\bullet$}(1,3)node{$\bullet$};
}
\quad
\tikzpic{-0.5}{[scale=0.5]
\draw(0,0)--(4,0);
\foreach \x in {1,2,3}{
\draw[dotted](\x,0)node[anchor=north]{$\x$}--(\x,4);
}
\foreach \x in {1,2,3,4}{
\draw(1,\x)node{$\bullet$};}
}

\caption{Six semi-pyramids for $n=4$}
\label{fig:symn4}
\end{figure}
%%%%%%%%%%%%%%%%%%%%%%%

We define the valuation of a heap $H$ by 
\begin{align*}
v(H):=x^{N(d)+N(m)}y^{2N(d)+N(m)}
p^{\sum(\text{left abscissae of pieces in $H$})}.
\end{align*}
The ordinary generating function for the heaps is defined as 
\begin{align*}
G(x,y,p):=\sum_{H}v(H).
\end{align*}

We apply Proposition \ref{prop:inversion} to the generating function $G(x,y,p)$.
For this purpose, we first compute the generating function $A(x,y,p)$ of trivial heaps 
without monomers.
Then, we compute the generating function $B(x,y,p)$ of trivial heaps with dimers 
and monomers by use of the generating function $A(x,y,p)$.
Let $A(x,y,p):=\sum_{n\ge0}(-1)^{n}a_{n}x^{n}$ be the ordinary generating function of trivial heaps
which contain only dimers.
The function $A(x,y,p)$ satisfies 
\begin{align*}
A(x,y,p)&=1+(-1)xy^2p A(p^2x,y,p)+(-1)xy^2p^2A(p^3x,y,p)+\cdots, \\
&=1+(-1)xy^2\sum_{j\ge1}p^{j}A(p^{j+1}x,y,p).
\end{align*}
Each term in the above equation corresponds to heaps such that there is a dimer at position $j$ and no
dimers left to it.
The factor $xy^2p^j$ corresponds to the dimer at position $j$, and $A(p^{j+1}x,y,p)$ corresponds to 
the generating function of heaps right to this dimer.
By solving the functional relation for the formal power series $A(x,y,p)$, 
the coefficient $a_{n}$ is explicitly given by 
\begin{align*}
a_{n}&=\genfrac{}{}{}{}{p^{2n-1}y}{1-p^n}a_{n-1}, \\
&=\genfrac{}{}{}{}{p^{n^2}y^{2n}}{(p;p)_{n}},
\end{align*}
where $n\ge 0$ and we have used the initial condition $a_{0}=1$.

Secondly, we compute the generating function $B(x,y,p)$ of trivial heaps with monomers 
and dimers.
Let $B(x,y,p):=\sum_{n\ge0}(-1)^{n}b_{n}x^{n}$ be the ordinary generating function of 
trivial heaps which contain dimers and monomers. 
Trivial heaps contributing to $B(x,y,p)$ are classified into two 
classes.
The first one is a trivial heap without a monomer, and 
the second is a trivial heap with a monomer.
Note that the position of a monomer is one if it exists.
From these, the function $B(x,y,p)$ satisfies 
\begin{align*}
B(x,y,p)=A(x,y,p)+(-1)xyp A(px,y,p).
\end{align*}
The second term is a contribution of trivial heaps with a monomer at position one.
The factor $xyp$ comes from the maximal monomer at position $1$.

By solving the functional equation above, the coefficients $b_{n}$ can be expressed 
in terms of the coefficient $a_{n}$ as follows: 
\begin{align*}
b_{n}=a_{n}+yp^{n}a_{n-1}.
\end{align*}
By a simple calculation with an explicit expression of $a_{n}$, we have 
\begin{align*}
b_{n}=(p^{n-1}y+1-p^{n})\genfrac{}{}{}{}{y^{2n-1}p^{n^2-n+1}}{(p;p)_{n}},
\end{align*}
where we have the initial condition $b_{0}=1$.

By applying Proposition \ref{prop:inversion} to our model, 
the generating function $G(x,y,p)$ is given by 
\begin{align*}
\displaystyle
G(x,y,p)&=\genfrac{}{}{}{}{A(px,y,p)}{B(x,y,p)}, \\
&=\genfrac{}{}{}{}{\displaystyle\sum_{n\ge0}(-1)^{n}x^{n}\genfrac{}{}{}{}{p^{n(n+1)}y^{2n}}{(p;p)_{n}}}
{\displaystyle\sum_{n\ge0}(-1)^{n}x^{n}\genfrac{}{}{}{}{(p^{n-1}y+1-p^{n})p^{n^2-n+1}y^{2n-1}}{(p;p)_{n}}}.
\end{align*}
To obtain a functional equation for $G(x,y,p)$, we rewrite the expression of $G(x,y,p)$
in terms of a continued fraction.

We define a continued fraction by 
\begin{align*}
[a_0,a_1,a_2,a_3,\ldots]:=
\cfrac{1}{1-\cfrac{a_0}{1-\cfrac{a_1}{1-\cfrac{a_2}{\cdots}}}}.
\end{align*}

We define a continued fraction $R(x,y,p)$ as 
\begin{align*}
R(x,y,p):=[a_{i}=p^{i+1}xy^2]^{-1}.
\end{align*}
Then, it is easy to show that $R(x,y,p)$ satisfies the following functional equation:
\begin{align}
\label{eq:recR}
R(x,y,p)=1-\genfrac{}{}{}{}{pxy^2}{R(xp,y,p)}.
\end{align}
Note that the formal series $R(x,y,p)^{-1}$ satisfies the $p$-shifted functional 
equation: $R(x,y,p)^{-1}=1+pxy^2R(x,y,p)^{-1}R(px,y,p)^{-1}$.
This functional equation reduced to the functional equation for the Catalan 
numbers if we set $(p,y)=(1,1)$.
As we will see later, this observation implies that we have two expressions of 
$G(x,y,p)$ in terms of statistics on Dyck paths and on symmetric Dyck paths respectively.

Let $G'(x,y,p)$ be a continued fraction of the following form:
\begin{align*}
G'(x,y,p):=(pxy)^{-1}([a_0=pxy,a_{i}=p^{i}xy^{2}]-1).
\end{align*}
Then, $G'(x,y,p)$ can be expressed in terms of $R(x,y,p)$ as 
\begin{align}
\label{eq:recG'}
G'(x,y,p)=\genfrac{}{}{}{}{1}{R(x,y,p)-pxy}.
\end{align}
From Eqs. (\ref{eq:recR}) and (\ref{eq:recG'}), the formal power series
$G'(x,y,p)$ satisfies the following functional relation:
\begin{align*}
G'(x,y,p)=\genfrac{}{}{}{}{1+p^2xyG'(px,y,p)}{1-pxy-pxy(-p+y+p^2xy)G'(px,y,p)}.
\end{align*}

\begin{prop}
\label{prop:GG'}
Let $G(x,y,p)$ and $G'(x,y,p)$ be the formal power series as above. 
Then, we have $G(x,y,p)=G'(x,y,p)$.
\end{prop}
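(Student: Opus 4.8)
The plan is to reduce the identity to a single three-term relation for the trivial-heap series $A$. The inversion lemma (Proposition~\ref{prop:inversion}) already gives $G(x,y,p)=A(px,y,p)/B(x,y,p)$ with $B(x,y,p)=A(x,y,p)-pxy\,A(px,y,p)$, while Eq.~(\ref{eq:recG'}) gives $G'(x,y,p)=1/(R(x,y,p)-pxy)$. Comparing the two expressions, it suffices to prove that $B(x,y,p)=A(px,y,p)\,(R(x,y,p)-pxy)$. Since $B=A(x,y,p)-pxy\,A(px,y,p)$, the two $pxy\,A(px,y,p)$ terms cancel, so this is equivalent to the single clean statement
\[ R(x,y,p)=\frac{A(x,y,p)}{A(px,y,p)}. \]

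First I would establish this ratio formula. Set $\tilde R(x,y,p):=A(x,y,p)/A(px,y,p)$; both $R$ and $\tilde R$ are formal power series with constant term $1$. Since $R$ satisfies the functional equation (\ref{eq:recR}), namely $R(x,y,p)=1-pxy^2/R(px,y,p)$, and this equation determines a power series with constant term $1$ uniquely (the term $pxy^2/R(px,y,p)$ has $x$-order at least one, so the coefficient of $x^n$ is fixed by the lower coefficients), it is enough to check that $\tilde R$ obeys the same equation. Substituting $\tilde R$ into (\ref{eq:recR}) and clearing denominators by $A(px,y,p)$, the required identity becomes the three-term relation
\[ A(x,y,p)=A(px,y,p)-pxy^2\,A(p^2x,y,p). \]

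Next I would verify this three-term relation directly from the explicit coefficients. Writing $A(x,y,p)=\sum_{n\ge0}(-1)^n a_n x^n$ with $a_n=p^{n^2}y^{2n}/(p;p)_n$, the relation is equivalent, coefficient by coefficient in $x^n$, to $a_n(1-p^n)=p^{2n-1}y^2\,a_{n-1}$ for all $n\ge1$, which is immediate from the closed form of $a_n$ since $a_n/a_{n-1}=p^{2n-1}y^2/(1-p^n)$. This is a one-line check. Combining the three displays then yields $B=A(px,y,p)\,(R-pxy)$, hence $G=A(px,y,p)/B=1/(R-pxy)=G'$.

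The only genuinely nonobvious step is recognizing that the continued fraction $R$ is exactly the quotient $A(x,y,p)/A(px,y,p)$ of the trivial-heap series; once this is in hand, the cancellation in $B$ and the verification of the three-term recurrence are routine. The main obstacle is therefore conceptual rather than computational: one must see past the continued-fraction packaging of $R$ and identify it with the $p$-shift quotient of $A$, after which the functional equation (\ref{eq:recR}) for $R$ translates into the elementary recurrence satisfied by the coefficients $a_n$.
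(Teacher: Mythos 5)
Your proof is correct and follows essentially the same route as the paper: your key identity $R(x,y,p)=A(x,y,p)/A(px,y,p)$ is the paper's statement $\widetilde{A}(x,y,p)=R(x,y,p)^{-1}$, and your three-term relation $A(x,y,p)=A(px,y,p)-pxy^{2}A(p^{2}x,y,p)$ is exactly the functional equation the paper derives by an index shift, your coefficient check $a_{n}(1-p^{n})=p^{2n-1}y^{2}a_{n-1}$ being that same shift written coefficientwise. Your only addition is making explicit the uniqueness of the formal power series solution of Eq.~(\ref{eq:recR}) with constant term $1$, which the paper uses implicitly when concluding $\widetilde{A}=R^{-1}$.
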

%%%%%%%%%%%
\begin{proof}
We show that $G(x,y,p)$ satisfies the same functional relation (\ref{eq:recG'}) 
as $G'(x,y,p)$.
First, we rewrite the functional equation for $A(x,y,p)$ as follows. 
Note that  
\begin{align*}
A(xp,y,p)&=\sum_{n\ge0}(-1)^{n}x^n\genfrac{}{}{}{}{p^{n(n+1)}y^{2n}}{(p;p)_{n}}, \\
&=A(x,y,p)+\sum_{n\ge0}(-1)^{n}x^{n+1}\genfrac{}{}{}{}{p^{(n+1)^2}y^{2(n+1)}}{(p;p)_{n}}, \\
&=A(x,y,p)+pxy^2A(p^{2}x,y,p).	
\end{align*}
The fraction $\widetilde{A}(x,y,p):=A(px,y,p)A(x,y,p)^{-1}$ satisfies 
\begin{align*}
\widetilde{A}(x,y,p)=1+pxy^2\widetilde{A}(x,y,p)\widetilde{A}(px,y,p),
\end{align*}
which implies $\widetilde{A}(x,y,p)=R(x,y,p)^{-1}$ by Eq. (\ref{eq:recR}).
By definition of $G(x,y,p)$ in terms of $A(x,y,p)$ and $B(x,y,p)$, we have 
\begin{align*}
G(x,y,p)^{-1}&=\genfrac{}{}{}{}{A(x,y,p)-xypA(px,y,p)}{A(px,y,p)}, \\
&=\widetilde{A}(x,y,p)^{-1}-pxy, \\
&=R(x,y,p)-pxy.
\end{align*}
This expression coincides with the expression (\ref{eq:recG'}) and 
we have $G(x,y,p)=G'(x,y,p)$ as formal power series.
\end{proof}

\begin{remark}
The formal power series $\widetilde{A}(x,y,p)$ in the proof of Proposition \ref{prop:GG'} 
is the generating function of Dyck paths. 
In fact, $\widetilde{A}(x,y,p)$ is expressed as 
\begin{align*}
\widetilde{A}(x,y,p)
=
\sum_{n\ge0}(pxy^2)^{n}
\sum_{\mu\in\mathcal{D}_{n}}p^{\mathrm{Area}(\mu)},
\end{align*}
where $\mathcal{D}_{n}$ is the set of Dyck paths of size $n$ and 
the static $\mathrm{Area}(\mu)$ is the number of unit boxes 
below $\mu$ and above $(UD)^{n}$.
\end{remark}

\subsection{Generating function in terms of symmetric Dyck paths}
To express the generating function $G(x,y,p)$ in terms of 
symmetric Dyck paths, we introduce the notion of symmetric Dyck paths.
We first observe a bijection between a heap for $G(x,y,p)$ and a symmetric 
Dyck path. 
Then, we interpret the generating function $G(x,y,p)$ in terms of 
statistics on symmetric Dyck paths.

A {\it non-crossing perfect matching} of size $2n$ is a line graph with $2n$ points
such that each point is connected to another point by an arch which does not 
intersect with another arch.
The cardinality of the set of non-crossing perfect matchings of size $2n$ 
is given by the $n$-th Catalan number.
We have a natural bijetion between a non-crossing perfect matching 
and a Dyck path as follows.
Given an arch from $i$ to $j$, $i<j$, in a non-crossing perfect matching, 
we define a word $w=w_1\ldots w_{2n}$ of length $2n$ by $w_{i}=U$ and $w_{j}=D$.
The word $w$ represents a Dyck path.	

A {\it symmetric non-crossing perfect matching} of size $n$ is a graph with $n$ points 
obtained from a non-crossing perfect matching $p$ of size $2n$ by cutting $p$ into 
two pieces along the vertical line in the middle.
A symmetric non-crossing perfect matching consists of arches and half-arches.
	
In Figure \ref{fig:sDyck}, we list up all symmetric non-crossing perfect matchings, i.e., 
symmetric Dyck paths of size four. There are six symmetric non-crossing perfect matchings.
\begin{figure}[ht]
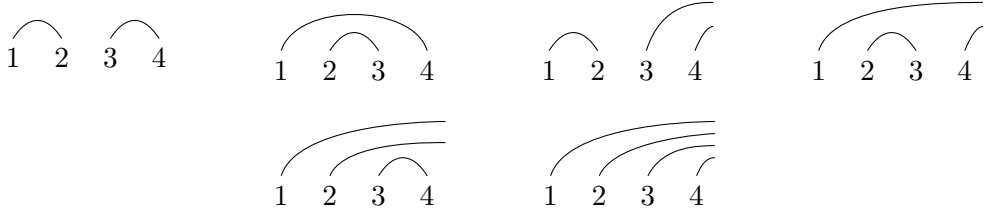

\tikzpic{-0.5}{[scale=0.4]
\draw(0,0)node[anchor=north]{$1$}..controls(0.5,0.8)and (1.1,0.8)..(1.6,0)node[anchor=north]{$2$};
\draw(3.2,0)node[anchor=north]{$3$}..controls(3.7,0.8)and (4.3,0.8)..(4.8,0)node[anchor=north]{$4$};
}\qquad
\tikzpic{-0.5}{[scale=0.4]
\draw(0,0)node[anchor=north]{$1$}..controls(0.5,1.6)and (4.3,1.6)..(4.8,0)node[anchor=north]{$4$};
\draw(1.6,0)node[anchor=north]{$2$}..controls(2.1,0.8)and (2.7,0.8)..(3.2,0)node[anchor=north]{$3$};
}\qquad
\tikzpic{-0.5}{[scale=0.4]
\draw(0,0)node[anchor=north]{$1$}..controls(0.5,0.8)and (1.1,0.8)..(1.6,0)node[anchor=north]{$2$};
\draw(3.2,0)node[anchor=north]{$3$}..controls(3.7,1.6)and(4.9,1.6)..(5.4,1.6)
(4.8,0)node[anchor=north]{$4$}..controls(5,0.5)and(5.2,0.8)..(5.4,0.8);
}\qquad
\tikzpic{-0.5}{[scale=0.4]
\draw(0,0)node[anchor=north]{$1$}..controls(0.5,1.6)and(4.3,1.6)..(5.4,1.6);
\draw(4.8,0)node[anchor=north]{$4$}..controls(5,0.5)and(5.2,0.8)..(5.4,0.8);
\draw(1.6,0)node[anchor=north]{$2$}..controls(2.1,0.8)and (2.7,0.8)..(3.2,0)node[anchor=north]{$3$};
}\\[11pt]
\tikzpic{-0.5}{[scale=0.4]
\draw(0,0)node[anchor=north]{$1$}..controls(0.5,1.6)and(4.3,1.8)..(5.4,1.8);
\draw(1.6,0)node[anchor=north]{$2$}..controls(2.1,1.2)and(5.2,1.1)..(5.4,1.1);
\draw(3.2,0)node[anchor=north]{$3$}..controls(3.7,0.8)and (4.3,0.8)..(4.8,0)node[anchor=north]{$4$};
}\qquad
\tikzpic{-0.5}{[scale=0.4]
\draw(0,0)node[anchor=north]{$1$}..controls(0.5,1.6)and(4.3,1.8)..(5.4,1.8);
\draw(1.6,0)node[anchor=north]{$2$}..controls(2.1,1.2)and(5.2,1.4)..(5.4,1.4);
\draw(3.2,0)node[anchor=north]{$3$}..controls(3.7,1)and(4.9,1)..(5.4,1)
(4.8,0)node[anchor=north]{$4$}..controls(5,0.5)and(5.2,0.6)..(5.4,0.6);
}
\caption{Symmetric non-crossing perfect matchings of size four}
\label{fig:sDyck}
\end{figure}
Each symmetric Dyck path in Figure \ref{fig:sDyck} corresponds to a heap 
in Figure \ref{fig:symn4}.
An arc in a symmetric Dyck path corresponds to a dimer, and a half-arc
corresponds to a monomer.

\begin{prop}
\label{prop:bijsym}
We have a bijection between the set of heaps $H$ of size $n$ satisfying the conditions (J1), (J2) and (J3) 
and the set of symmetric non-crossing perfect matchings $C$ of size $n$. 
\end{prop}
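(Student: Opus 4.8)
The plan is to construct the bijection directly, realizing the correspondence already suggested by Figures \ref{fig:symn4} and \ref{fig:sDyck}: a full arc becomes a dimer and a half-arch becomes a monomer. The conceptual backbone is the \emph{unfolding} of a symmetric matching $C$ of size $n$ into a genuine palindromic non-crossing perfect matching of size $2n$: a full arc $(i,j)$ of $C$ unfolds to the pair $\{(i,j),(2n+1-j,2n+1-i)\}$, while a half-arch at $h$ unfolds to the single middle-crossing arc $(h,2n+1-h)$. Reading openers as $U$ and closers as $D$, the left half of the unfolded Dyck word is a lattice path of length $n$ with $U,D$ steps that stays nonnegative, its final height equal to the number $N(m)$ of half-arches; here each closer is the right endpoint of a full arc, so the number of $D$ steps equals $N(d)$ and $n=2N(d)+N(m)$, which already matches the definition of the size of a heap. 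This explains why full arcs (two endpoints) pair with dimers and half-arches (one endpoint) with monomers.

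To define the map $C\mapsto H$ explicitly I would place one piece per arc/half-arch of $C$ and then drop these pieces in \emph{decreasing order of their left endpoint} (rightmost left endpoint first), so that the piece attached to the leftmost point $1$ is dropped last and ends on top. A half-arch is dropped as a monomer, which by convention sits in column $1$, so (J1) and (J2) are immediate. A full arc $(i,j)$ is dropped as a dimer whose left abscissa is $1+d$, where $d$ is the number of full arcs of $C$ strictly enclosing $(i,j)$; thus the left abscissa records the nesting depth.

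For well-definedness I must check that the result is a heap satisfying (J1)--(J3), and only (J3) needs an argument. The leftmost point $1$ is always a left endpoint, so the last piece dropped lies in column $1$; I claim it is the unique maximal piece. Indeed, a dimer in column $c\ge 2$ corresponds to an arc of depth $c-1$, whose immediately enclosing arc is a dimer in column $c-1$; these two segments overlap in column $c$, and since the enclosing arc opens earlier it is dropped later and therefore lies above, so the inner dimer is not maximal. Any monomer, and any depth-$0$ dimer other than the top one, lies in column $1$ below the last-dropped piece, hence is also not maximal. Therefore the column-$1$ top piece is the unique maximal piece, giving (J3).

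The inverse $H\mapsto C$ recovers, for each piece, its type and its column, turning a monomer into a half-arch and a dimer in column $c$ into a full arc of nesting depth $c-1$, and then reassembles the balanced arc diagram (equivalently the nonnegative lattice path) from this depth data, the leftover openers becoming half-arches. The main obstacle is proving that the two assignments are mutually inverse, which amounts to showing that the nesting order of a \emph{non-crossing} matching corresponds exactly to the stacking dynamics of the drop procedure --- enclosure $\leftrightarrow$ covering and depth $\leftrightarrow$ column --- and, conversely, that every heap obeying (J1)--(J3) decodes to a matching that is non-crossing rather than merely arbitrary. I would settle this by induction on the size $n$ using the leftmost-point decomposition: if point $1$ carries a half-arch, peel off the top monomer of $H$ and recurse on size $n-1$; if point $1$ opens a full arc, peel off the top dimer and apply the first-return decomposition of the matching, matching it to the removal of the top piece and the induced splitting of the heap. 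This recursion is exactly the combinatorial shadow of the functional equation satisfied by $G(x,y,p)$ and its continued-fraction form, which serves as a consistency check.
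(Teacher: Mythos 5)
Your construction is correct, and it reaches the same bijection as the paper, but by a genuinely different route. The paper's proof is a block decomposition: since (J2) puts every monomer in column $1$, the monomers cut a heap satisfying (J1)--(J3) into maximal blocks of dimers, each a semi-pyramid of type $II$ with maximal piece in column $1$; correspondingly, in a symmetric non-crossing matching no full arch can straddle a half-arch (in your unfolded picture that would be a crossing), so the half-arches cut the matching into complete non-crossing matchings, and the paper matches blocks to blocks, quoting the dimer-semi-pyramid versus non-crossing-matching correspondence from Section \ref{sec:LPGF} and locating the $i$-th half-arch at $N_i=2N^d_{i-1}+N^m_{i-1}+1$. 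You instead define a single global map, arc by arc (column $=1+{}$nesting depth in full arcs, drop order $=$ decreasing left endpoint), verify (J1)--(J3) directly --- your innermost-enclosing-arc covering argument for (J3) is sound, since enclosing arcs of a fixed arc are totally ordered and the innermost one yields an overlapping dimer dropped later --- and you plan to prove bijectivity by induction via the leftmost-point and first-return decompositions. Your route buys two things the paper gets only implicitly: the identity ``left abscissa of a dimer $=1+{}$number of strictly enclosing arches'' holds by construction, which is exactly the weight statement needed later in Proposition \ref{prop:GxypSym}, and the unfolding to a palindromic matching of size $2n$ cleanly explains the size count $2N(d)+N(m)$; the paper's route buys modularity, reusing Section \ref{sec:LPGF}. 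One point you must pin down when completing your induction: a heap does not remember your drop order, so the decoding has to fix a canonical linear extension --- iterated removal of the \emph{rightmost} maximal piece, the convention the paper uses elsewhere (e.g.\ for $\kappa^{-1}$) --- because after removing the top piece several maximal pieces can coexist; your sketch leaves this tie-breaking implicit, though it is forced, since reading a shallower maximal piece first would produce a depth sequence violating $d_{i+1}\le d_i+1$ and hence no non-crossing matching. With that convention made explicit, your leftmost-point/first-return induction goes through and is no less rigorous than the paper's own argument.
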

%%%%%%%%%%
\begin{proof}

We construct a bijection between the two sets.
We first construct a map from a heap $H$ to a symmetric non-crossing perfect match $C$.
In a heap $H$, we have several monomers and several staircases which consist of dimers. 
Let $N^{d}_{i}$ (resp. $N^{m}_{i}$) be the number of dimers (resp. monomers) up to 
$i$-th piece from top in the heap $H$.
We define the integer $N_{i}:=2N^{d}_{i-1}+N^{m}_{i-1}+1$.
Then, the $i$-th monomer from top corresponds to a half-arch at the position $N_{i}$ in $C$.
Since the position of monomers is always one, monomers divide the heap into monomers and staircases 
consisting of only dimers. The staircase consisting of only dimers satisfies the properties 
of heaps of type $II$ studied in Section \ref{sec:LPGF}.
Especially, left abscissa of the maximal piece of such a staircase is one, and it is a semi-pyramid. 
Therefore, such staircases give (non-symmetric) non-crossing perfect matching.
On the other hand, in $C$, half-arches divide the symmetric non-crossing perfect matching 
into half-arches and (non-symmetric) non-crossing perfect matching.  
As a result, a monomer fixes the position of a half-arch in $C$, and staircases 
give a non-crossing perfect matching between half-arches.
This defines a map from $h$ to $C$.
 
Conversely, suppose we have a symmetric Dyck path $C$.
Then, first we decompose $C$ into arches and half-arches.
A half-arch corresponds to a monomer, and arches corresponds to a semi-pyramid such that 
the left abscissa of its maximal piece is one.
This semi-pyramid is a heap of type $II$.
This gives a map from a symmetric Dyck path to a heap.
 
From these observations, we have a bijection between the two sets, which completes 
the proof. 
\end{proof}

We define a weight to a symmetric non-crossing perfect matching $C$ as follows.
Given a arch $a$ connecting $i$ with $j$, $i<j$, in $C$, 
we define the weight $w(a)$ by one plus the number of arches connecting $i'$ with $j'$, 
$i'<j'$, such that $i'<i<j<j'$.
Similarly, the weight $w(b)$ given to a half-arch $b$ is defined to be one.
The weight $w(C)$ of $C$ is given by 
\begin{align*}
w(C):=\sum_{a\in C_{a}}w(a)+\sum_{b\in C_{h}}w(b),
\end{align*} 
where $C_a$ and $C_{h}$ are the set of arches and half-arches.

Let $\mathcal{C}_{n}$ be the set of symmetric non-crossing perfect matchings
and $|a(C)|$ be the number of arches in $C\in\mathcal{C}_{n}$, {\it i.e.}, 
the sum of the numbers of arches and half-arches.
\begin{prop}
\label{prop:GxypSym}
The generating function $G(x,y,p)$ can be expressed in terms of symmetric 
Dyck paths as 
\begin{align}
\label{eq:GxypSym}
G(x,y,p)=\sum_{n\ge0}y^{n}\sum_{C\in\mathcal{C}_{n}}x^{|a(C)|}p^{w(C)}.
\end{align}
\end{prop}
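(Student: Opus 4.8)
The plan is to deduce \eqref{eq:GxypSym} directly from the weight--statistic correspondence hidden in the bijection of Proposition \ref{prop:bijsym}. Writing that bijection as $H\mapsto C$, I would show term by term that $v(H)$ equals $y^{n}x^{|a(C)|}p^{w(C)}$, where $n$ is the size of $C$, and then sum over all configurations. Two of the three matchings are immediate. Since the size of $H$ is $2N(d)+N(m)$ and this is exactly the size $n$ of $C$, the factor $y^{2N(d)+N(m)}$ becomes $y^{n}$. Since dimers correspond to arches and monomers to half-arches, the number of pieces $N(d)+N(m)$ equals $|a(C)|$, so $x^{N(d)+N(m)}$ becomes $x^{|a(C)|}$. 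Hence everything reduces to proving that the $p$-exponents agree, namely
\begin{align}
\label{eq:symabscissa}
\sum(\text{left abscissae of pieces of }H)=w(C).
\end{align}

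Next I would exploit the decomposition underlying Proposition \ref{prop:bijsym}. The monomers of $H$ all lie in the column of abscissa one, so they split $H$ into these monomers together with the maximal runs of dimers between them, each such run being a semi-pyramid of dimers; on the other side the half-arches split $C$ into themselves together with (non-symmetric) non-crossing perfect matchings. The monomer part of \eqref{eq:symabscissa} is trivial: each monomer has left abscissa one and each half-arch $b$ has $w(b)=1$, so both sides receive $N(m)$. Thus \eqref{eq:symabscissa} reduces to the corresponding identity for a single semi-pyramid of dimers $\Sigma$ and its non-crossing perfect matching, namely that the sum of the left abscissae of the dimers of $\Sigma$ equals $\sum_{a}w(a)$ over the arches of the associated block. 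To see that the two decompositions are genuinely compatible I would first record the no-interleaving fact: in a symmetric non-crossing matching no half-arch endpoint can lie strictly inside the span of a full arch (its strand, which must exit to the right at the centre, would otherwise cross that arch), so the span of every full arch stays within one dimer block and nesting of full arches occurs only within blocks.

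The heart of the argument --- and the step I expect to be the main obstacle --- is the following lemma about pure dimer heaps: under the correspondence between a semi-pyramid of dimers and a non-crossing perfect matching, a dimer with left abscissa $\ell$ corresponds to an arch $a$ with $w(a)=\ell$, i.e. $\ell=1+\#\{\text{arches }(i',j')\text{ with }i'<i<j<j'\}$, where $a$ joins $i<j$. I would prove this by induction on the number of arches using the first-return decomposition of a non-crossing matching: matching position $1$ to its partner isolates an outermost arch, nested by nothing, which corresponds to the apex dimer at abscissa one; the arches enclosed by it each gain exactly one nesting and correspond to a sub-semi-pyramid shifted one column to the right, so their abscissae all increase by one, while the arches lying entirely to the right keep both their nesting numbers and their abscissae. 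This reproduces $\ell=1+(\text{nesting depth})$ in every case, and summing over a block gives $\sum_{a}w(a)=\#\{\text{arches}\}+\#\{\text{nestings}\}=\sum\ell$.

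Combining the block-wise identity with the monomer contribution yields \eqref{eq:symabscissa}, hence $v(H)=y^{n}x^{|a(C)|}p^{w(C)}$ for each corresponding pair; summing over all heaps satisfying (J1)--(J3), equivalently over all $C\in\mathcal{C}_{n}$ and all $n$, gives \eqref{eq:GxypSym}. The delicate points to get right are the precise bookkeeping of the abscissa shift in the inductive step and the verification that the monomer splitting of $H$ and the half-arch splitting of $C$ are identified by the bijection; the examples in Figures \ref{fig:symn4} and \ref{fig:sDyck} can be used as a sanity check throughout.
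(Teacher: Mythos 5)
Your proposal is correct and follows essentially the same route as the paper: both reduce \eqref{eq:GxypSym} to a termwise check, under the bijection of Proposition \ref{prop:bijsym}, that $v(H)=y^{n}x^{|a(C)|}p^{w(C)}$, with monomers matching half-arches of weight one and a dimer of left abscissa $\ell$ matching an arch $a$ with $w(a)=\ell$. The only difference is that the paper asserts this last identity as an immediate consequence of the correspondence, whereas you prove it via the no-interleaving observation and a first-return induction on blocks of arches --- a step the paper leaves implicit, which your argument legitimately fills in.
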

%%%%%%%%%%
\begin{proof}
Let $H$ a heap of size $n$. From the condition (J2), a monomer contributes 
to the generating function as $xp$.
In terms of a half-arch $b$, we have $xp=xp^{w(b)}$. 
If the left abscissa of a dimer $d$ in $H$ is $n$, $d$ contributes 
to the generating function as $x^{2}yp^{n}$.
In terms of an arch $a$ in a symmetric non-crossing perfect matching, $n$ is equal to 
$w(a)$.
This follows from the correspondence between a heap and a symmetric Dyck path 
in Proposition \ref{prop:bijsym}.
From these observations, we have 
\begin{align*}
w(H)=y^{n}x^{|a(C)|}p^{w(C)},
\end{align*}
which completes the proof.
\end{proof}

\subsection{Generating function in terms of Dyck paths}
The generating function $G(x,y,p)$ has another expression in terms of 
(not necessarily symmetric) Dyck paths.
Recall that a Dyck path $\mu$ can be decomposed into a concatenation 
of prime Dyck paths, {\it i.e.}, $\mu=\mu_1\circ\ldots\mu_{\mathfrak{p}(\mu)}$
where the value $\mathfrak{p}(\mu)$ is the number of prime Dyck paths in $\mu$.
We denote by $\mathrm{Area}(\mu)$ the number of unit boxes
below $(UD)^{n}$ and above $\mu$ where $\mu$ is a Dyck path consisting of $n$ up 
and $n$ right steps.
\begin{prop}
\label{prop:GxypDyck}
The generating function $G(x,y,p)$ can be expressed in terms of 
Dyck paths as
\begin{align}
\label{eq:GxypDyck}
G(x,y,p)=
\sum_{n\ge0}(pxy)^{n}
\sum_{\mu\in\mathcal{D}_{n+1}}
p^{D(\mu)}y^{n+1-\mathfrak{p}(\mu)},
\end{align} 
where $\mathcal{D}_{n}$ is the set of Dyck paths of size $n$ and 
\begin{align}
\label{eq:Dmu}
D(\mu):=\mathfrak{p}(\mu)+\mathrm{Area}(\mu)-(n+1).
\end{align}
\end{prop}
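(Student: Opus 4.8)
The plan is to start from the closed form for $G$ that was already extracted in the proof of Proposition \ref{prop:GG'}. There it is shown that $G(x,y,p)^{-1} = R(x,y,p)-pxy = \widetilde{A}(x,y,p)^{-1}-pxy$, where $\widetilde{A}(x,y,p)=A(px,y,p)A(x,y,p)^{-1}$, and that $\widetilde{A}$ is the area generating function of Dyck paths, $\widetilde{A}(x,y,p)=\sum_{k\ge0}(pxy^2)^{k}\sum_{\nu\in\mathcal{D}_{k}}p^{\mathrm{Area}(\nu)}$ (area measured against $(UD)^{k}$, as in the Remark following Proposition \ref{prop:GG'}). Consequently
\[
G(x,y,p)=\frac{\widetilde{A}(x,y,p)}{1-pxy\,\widetilde{A}(x,y,p)}=\sum_{m\ge0}(pxy)^{m}\,\widetilde{A}(x,y,p)^{m+1},
\]
the geometric expansion being valid as a formal power series since $\widetilde{A}=1+O(x)$.

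Next I would read $\widetilde{A}^{m+1}$ as the generating function of $(m+1)$-tuples $(\nu^{(1)},\dots,\nu^{(m+1)})$ of Dyck paths, so that such a tuple contributes $(pxy^2)^{K}p^{\mathrm{A}}$ with $K=\sum_{i}|\nu^{(i)}|$ and $\mathrm{A}=\sum_{i}\mathrm{Area}(\nu^{(i)})$. The heart of the argument is the lift-and-concatenate bijection
\[
(\nu^{(1)},\dots,\nu^{(m+1)})\;\longmapsto\;\mu:=(U\nu^{(1)}D)\circ\cdots\circ(U\nu^{(m+1)}D).
\]
Each factor $U\nu^{(i)}D$ is a prime Dyck path, so this is precisely the prime decomposition of $\mu$; conversely any $\mu\in\mathcal{D}_{n+1}$ with $\mathfrak{p}(\mu)=m+1$ prime components arises uniquely by stripping the outer $U,D$ from each component. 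Under this map $\mu$ has size $n+1=(m+1)+K$ and $\mathfrak{p}(\mu)=m+1$.

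The one substantive point — and the step I expect to be the main obstacle — is that the area transports correctly, namely
\[
\mathrm{Area}(\mu)=K+\mathrm{A}=\sum_{i}\bigl(|\nu^{(i)}|+\mathrm{Area}(\nu^{(i)})\bigr).
\]
This rests on two facts that I would check on the box-counting level: lifting $\nu$ to the prime path $U\nu D$ raises its whole profile one level, giving $\mathrm{Area}(U\nu D)=\mathrm{Area}(\nu)+|\nu|$; and the area is additive over a concatenation of primes placed side by side along the base. Everything else is exponent bookkeeping.

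Finally I would substitute these identities into the summand, writing $n=m+K$. The tuple term $(pxy)^{m}(pxy^2)^{K}p^{\mathrm{A}}$ equals $x^{n}y^{m+2K}p^{m+K+\mathrm{A}}$, while the proposed right-hand summand indexed by $\mu$ equals $(pxy)^{n}p^{D(\mu)}y^{n+1-\mathfrak{p}(\mu)}$. Using $\mathfrak{p}(\mu)=m+1$, $n+1=m+1+K$, $\mathrm{Area}(\mu)=K+\mathrm{A}$ and $D(\mu)=\mathfrak{p}(\mu)+\mathrm{Area}(\mu)-(n+1)$, the $x$-exponents are both $m+K$, the $y$-exponents are both $m+2K$, and the $p$-exponents are both $m+K+\mathrm{A}$. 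Summing over all $m\ge0$ and all tuples, equivalently over all Dyck paths $\mu$ grouped by $\mathfrak{p}(\mu)-1=m$, then yields exactly Eq. (\ref{eq:GxypDyck}).
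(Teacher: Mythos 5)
Your proof is correct, but it follows a genuinely different route from the paper's. The paper derives Eq.~(\ref{eq:GxypDyck}) combinatorially from the symmetric-Dyck-path expression (\ref{eq:GxypSym}) of Proposition \ref{prop:GxypSym}: it transports the statistics through the word-rewriting bijection of Lemma \ref{lemma:bijSymDD} between symmetric Dyck paths of size $n$ and Dyck paths of size $n+1$, and matches the exponents of $x$, $y$ and $p$ under that map. You instead bypass the symmetric objects entirely and stay on the algebraic side: from $G^{-1}=R-pxy=\widetilde{A}^{-1}-pxy$, established in the proof of Proposition \ref{prop:GG'}, you expand $G=\sum_{m\ge0}(pxy)^{m}\widetilde{A}^{m+1}$ and interpret $\widetilde{A}^{m+1}$ via the prime decomposition $\mu=(U\nu^{(1)}D)\circ\cdots\circ(U\nu^{(m+1)}D)$, using the two box-counting facts $\mathrm{Area}(U\nu D)=\mathrm{Area}(\nu)+|\nu|$ and additivity of area under concatenation of primes. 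Both facts are correct in the conventions of this section, and your exponent bookkeeping checks out: with $n=m+K$ one finds $D(\mu)=\mathrm{A}$, so $D(\mu)$ is precisely the total area of the stripped prime components. This buys a more conceptual account of the formula --- the factor $(pxy)^{\mathfrak{p}(\mu)-1}$ drops out of the geometric series and the statistic (\ref{eq:Dmu}) is forced by the computation, whereas in the paper's proof $D(\mu)$ is fixed somewhat after the fact (``by choosing $D(\mu')$ as Eq.~(\ref{eq:Dmu})''). What your route does not provide is the explicit bijection between symmetric Dyck paths and Dyck paths (Lemma \ref{lemma:bijSymDD} and Corollary \ref{cor:DycksymDyck}), which is independent combinatorial content of the paper's approach. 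One dependency to flag: you invoke the Remark following Proposition \ref{prop:GG'} for the interpretation of $\widetilde{A}$ as the area generating function of Dyck paths, and the paper asserts that Remark without proof; however, it follows from the functional equation $\widetilde{A}(x,y,p)=1+pxy^{2}\widetilde{A}(x,y,p)\widetilde{A}(px,y,p)$ derived in that same proof, by the first-return decomposition together with exactly the two area identities you already verify, so your argument can be made fully self-contained with one extra sentence.
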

%%%%%%%%%%%%

To prove Proposition \ref{prop:GxypDyck}, we introduce a bijection 
between a symmetric Dyck path and a (non-symmetric) Dyck 
path.
Let $\mu$ be a symmetric Dyck path such that the sum of the number 
of arches and half-arches is $n$, and 
$\mu'$ be a Dyck path of size $n+1$.
We construct a correspondence between $\mu$ and $\mu'$.
We consider a word expression of $\mu$ in terms of an up step $u$ and a right step $d$.
The path $\mu'$ is obtained from $\mu$ as follows.
\begin{enumerate}
\item If a $u$ in $\mu$ is in a half-arch, we replace it with $du$.
\item If a $u$ and a $d$ in $\mu$ are connected by an arch, 
we leave them as they are. 
\item We append a $u$ and a $d$ to the sequence of $u$s and $d$s 
from left and right respectively.
We define $\mu'$ as the new sequence.
\end{enumerate}

\begin{example}
Consider a symmetric Dyck path $uudu$. The first and third $u$s are in half-arches,
and the second $u$ and a unique $d$ are connected by an arch.
\begin{align*}
\mu=uudu\xrightarrow{(1),(2)} DUudDU \xrightarrow{(3	)} UDUUDDUD,
\end{align*}
where $U$ and $D$ are new letters replaced from $u$ and $d$.
Therefore, we have $\mu'=uduuddud$.

Similarly, a symmetric Dyck path $\mu=uduud$ is mapped to $\mu'=uudduudd$.
\end{example}

The inverse map from $\mu'$ to $\mu$ is as follows.
\begin{enumerate}
\item We delete $u$ and $d$ in $\mu'$ from left and right respectively.
\item If the first letter is $d$, then the second letter is $u$. 
We replace this pair of $u$ and $d$ with a single letter $u$.
\item If the first letter is $u$, take a maximal prime Dyck path starting 
from this $u$. Then, we leave the letters in this prime Dyck path as they are.
\item We apply (2) and (3) to the remaining sequence until we replace all the letters with 
new letters.
\end{enumerate}
Since we delete the first $u$ from $\mu'$ in the process (1), the appearance of $du$ 
in the process (2) is obvious if it exists. In other words, we have no sequence 
$dd$ in (2) and (3).

For example, if $\mu'=uduududd$, we have 
\begin{align*}
\mu'=uduududd\rightarrow duudud \rightarrow Uudud\rightarrow UUDud\rightarrow UUDUD=\mu,
\end{align*}
where $U$ and $D$ are new letters.

As a summary, we have the following lemma.
\begin{lemma}
\label{lemma:bijSymDD}
The above map from $\mu$ to $\mu'$ is a bijection.
\end{lemma}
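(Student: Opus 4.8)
The plan is to expose the arch/half-arch structure of a symmetric Dyck path so explicitly that both directions of the map can be read off a single normal form. Writing $\mu$ as a word in $u$ and $d$ and using the bracket matching that defines the arches, I would first prove the structural fact that a half-arch (an unmatched $u$) is never enclosed by an arch: if a matched pair $u'\cdots d'$ straddled an unmatched $u$, then at $d'$ the bracket matching would pop that $u$ rather than $u'$, contradicting that $u$ is unmatched. Consequently every symmetric Dyck path has a unique \emph{canonical decomposition}
\[
\mu=E_{0}\,u\,E_{1}\,u\,\cdots\,u\,E_{H},
\]
where $H$ is the number of half-arches and each $E_{j}$ is a (possibly empty) genuine Dyck path built from arches alone. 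Here the number of up steps is $n=A+H$, with $A$ the total number of arches, so this matches the claimed size.

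Call the forward map $\phi$. On the canonical form it acts transparently: it replaces each separating half-arch $u$ by $du$ and wraps the word, giving
\[
\phi(\mu)=u\,E_{0}\,(du)\,E_{1}\,(du)\cdots(du)\,E_{H}\,d.
\]
I would then check that $\phi(\mu)$ is a Dyck path of size $n+1$ by a height computation: after the initial $u$ the height is $1$; a matched $u$, a $d$, and a block $du$ change the running height by $+1$, $-1$, and $0$ respectively, and the only dips occur inside the blocks $du$, reaching (current height)$-1$. Since at every prefix the number of matched $u$'s is at least the number of $d$'s (each $d$ matches an earlier matched $u$), the height just before any such block is at least $1$ after the wrap, so each dip stays nonnegative, the word is balanced, and it returns to $0$ exactly at the final $d$. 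Thus $\phi$ sends symmetric Dyck paths of size $n$ into Dyck paths of size $n+1$.

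For the bijection I would verify that the inverse map $\psi$ described in the statement undoes $\phi$ on the canonical form. Deleting the outer $u$ and $d$ from $\phi(\mu)$ returns $w=E_{0}\,(du)\,E_{1}\cdots(du)\,E_{H}$, a balanced word; viewed as a path from height $0$ one has $w(j)=\phi(\mu)(j+1)-1\ge-1$, with height $-1$ attained exactly at the midpoints of the blocks $du$ sitting at arch-level $0$. This is precisely the \emph{no $dd$} phenomenon noted in the text, and it guarantees the $du$-peeling step is always legal. The first-return (maximal prime Dyck path) peeling then strips the prime components of $E_{0}$ as arches, meets the leading $d$ of the first $du$ and peels it as a half-arch, and repeats, recovering $E_{0},u,E_{1},u,\dots,u,E_{H}=\mu$; hence $\psi\circ\phi=\mathrm{id}$. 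Conversely, from any Dyck path $\mu'$ of size $n+1$, deleting its outer $u,d$ yields a balanced $w$ of minimum height $\ge-1$, whose unique decomposition into prime blocks and height-$(-1)$ dips $du$ produces a ballot word $\mu=\psi(\mu')$ with exactly $n$ up steps; since the peeled $du$'s are exactly the unmatched $u$'s of $\mu$ (they lie between self-matched prime excursions), applying $\phi$ rebuilds $u\,w\,d=\mu'$, so $\phi\circ\psi=\mathrm{id}$.

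The main obstacle is not the height bookkeeping but establishing that the first-return peeling of $w$ aligns exactly with the canonical blocks: one must argue that a maximal prime Dyck path never swallows a $du$ that ought to become a half-arch, and that no half-arch $du$ is ever hidden inside an arch excursion. Both facts are consequences of the non-nesting property above, so I would prove that property carefully first; once it is in hand, the two compositions collapse to the identity essentially by inspection of the normal form.
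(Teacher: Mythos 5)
Your proof is correct and follows essentially the same route as the paper: the same replacement $u\mapsto du$ on half-arches with the outer $u\cdots d$ wrap, the same left-to-right peeling for the inverse, and the same key \emph{no $dd$} observation guaranteeing the peeling is legal. The paper merely asserts the lemma after describing the two maps, so your non-nesting lemma, the canonical decomposition $\mu=E_0\,u\,E_1\,u\cdots u\,E_H$, and the explicit checks that $\psi\circ\phi$ and $\phi\circ\psi$ are the identity supply exactly the verification the paper leaves implicit, without changing the underlying argument.
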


The following corollary is a direct consequence of Lemma \ref{lemma:bijSymDD}.
\begin{cor}
\label{cor:DycksymDyck}
The number of Dyck paths of size $n+1$ is equal to the number of 
symmetric Dyck paths of size $n$ where $n$ is the sum of the numbers of 
arches and half-arches.
\end{cor}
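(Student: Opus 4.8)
The plan is to read the corollary off directly from Lemma \ref{lemma:bijSymDD}: that lemma already supplies the explicit map $\mu\mapsto\mu'$ together with its inverse, so the only substantive work is done. What remains is to check that this bijection is correctly graded with respect to the two notions of size appearing in the statement, and then simply to pass to cardinalities.

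First I would confirm the grading. Start from a symmetric Dyck path $\mu$ in which the arches and half-arches number $n$ in total, and write $A$ for the number of arches and $H$ for the number of half-arches, so that $A+H=n$. In the word expression of $\mu$ each arch contributes one $u$ and one $d$, while each half-arch contributes a single $u$. Step (1) of the construction replaces each half-arch letter $u$ by $du$, step (2) leaves the arch letters unchanged, and after these two steps the numbers of $u$'s and of $d$'s are both equal to $A+H$. Step (3) prepends a $u$ and appends a $d$, so $\mu'$ has exactly $A+H+1=n+1$ up steps and $n+1$ down steps; hence $\mu'$ is a path of size $n+1$, and by Lemma \ref{lemma:bijSymDD} it is a genuine Dyck path, i.e. an element of $\mathcal{D}_{n+1}$.

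I would then conclude formally. By the previous paragraph the map $\mu\mapsto\mu'$ of Lemma \ref{lemma:bijSymDD} restricts to a map from the finite set of symmetric Dyck paths with $A+H=n$ into $\mathcal{D}_{n+1}$, and since that map is a bijection (its inverse being the explicit procedure described before the lemma), it is in fact a bijection onto $\mathcal{D}_{n+1}$. A bijection between finite sets preserves cardinality, so the number of symmetric Dyck paths of size $n$ equals the number of Dyck paths of size $n+1$, which is precisely the assertion of the corollary.

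I do not expect any genuine obstacle here: the entire content is contained in Lemma \ref{lemma:bijSymDD}, and once the bijection is granted the corollary is a purely formal consequence. The only point requiring a little care is the grading bookkeeping above, ensuring that ``size $n$'' on the symmetric side (counted as arches plus half-arches) matches ``size $n+1$'' on the Dyck side; this is what lets the global bijection descend to a bijection between the two relevant finite sets.
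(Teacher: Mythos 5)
Your proposal is correct and follows exactly the paper's route: the paper likewise derives the corollary as an immediate consequence of Lemma \ref{lemma:bijSymDD}, giving no further argument. Your added bookkeeping---that with $A$ arches and $H$ half-arches the word of $\mu$ has $A+H$ letters $u$ and $A$ letters $d$, that replacing each half-arch $u$ by $du$ equalizes the counts at $A+H$, and that prepending $u$ and appending $d$ yields a path of size $A+H+1=n+1$---is a correct and worthwhile verification of the grading that the paper leaves implicit.
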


\begin{example}
Set $n=2$. We show the correspondence between Dyck paths and symmetric Dyck paths in Table \ref{table:DsD}.
\begin{table}[h]
\begin{tabular}{c|c|c|c|c|c}  
Dyck path & $UUUDDD$ & $UUDUDD$ & $UDUUDD$ & $UUDDUD$ & $UDUDUD$ \\	\hline
Symmetric Dyck path & $UUDD$ & $UDUD$ & $UUD$ & $UDU$ & $UU$ \\
\end{tabular}
\vspace{12pt}
\caption{A correspondence between Dyck paths and symmetric Dyck paths}
\label{table:DsD}
\end{table}
In Table \ref{table:DsD}, the left two symmetric Dyck paths have no half-arches,
the next two have a single half-arch, and the right-most one has two half-arches.
\end{example}

%%%%%%%%%%%%%
\begin{proof}[Proof of Proposition \ref{prop:GxypDyck}]
We compare the right hand side of Eq. (\ref{eq:GxypSym}) 
with the right hand side of Eq. (\ref{eq:GxypDyck}).
We have constructed a bijection between a symmetric Dyck path $C$ with $|a(C)|$ arches 
and a Dyck path of size $|a(C)|+1$ in Lemma \ref{lemma:bijSymDD}.
To show Eq. (\ref{eq:GxypDyck}), it is enough to show that 
the valuation of a symmetric Dyck path $\mu$ coincides with that of a Dyck path $\mu'$
under this bijection.
Suppose that we have $n_{a}$ arches and $n_{ha}$ half-arches in a symmetric 
Dyck path $C$.
From Eq. (\ref{eq:GxypSym}), the exponent of $y$ is $2n_a+n_{ha}$ for a symmetric 
Dyck path. 
On the other hand, the exponent of $y$ for a Dyck path is $y^{n_a+n_{ha}}$ from 
the factor $(pxy)^{n}$, and $y^{n_a}$ from $(n+1)-\mathfrak{p}(\mu')=n_a$.
We have $(n+1)-\mathfrak{p}(\mu')=n_a$ since we replace $u$ in $\mu$ by $du$ in $\mu'$ and 
this replacement reflects the number of prime Dyck paths in $\mu'$ and the number of 
half-arches in $\mu$.
Therefore, the exponents of $y$ coincide with each other.
Similarly, it is easy to see that the exponents of $x$ are $n_a+n_{ha}$ for the two paths 
under the bijection.

The exponent of $p$ for a symmetric Dyck path $\mu$ is given by $w(C)$ where $C$ is 
a symmetric non-crossing perfect matching for $\mu$.
Suppose that $\mu$ contains a maximal Dyck path $\mu_{1}$ consisting of arches.
The weight given to this Dyck path $\mu_1$ is the number of unit boxes 
below $\mu_{1}$ and above $(DU)^{m}$ where $m$ is the size of $\mu_{1}$.
Recall that the contribution of a half-arch in $\mu$ to $w(C)$ is one.
The exponent of $p$ for $\mu$ counts the number of boxes below a path $\mu$ and above the path 
$(DU)^{t}$ where $\mu'$ is obtained from $\mu$ by replacing an up step $u$ on a half-arch 
by $du$ and $t:=n_{a}+n_{ha}$. 
On the other hand, we have the exponent $n_{a}+n_{ha}$ of $p$ from the factor $(xyp)^{n}$ 
for $\mu'$.
Further, we have the exponent $D(\mu')$ from Eq. (\ref{eq:GxypDyck}).
By comparing the the exponent for $\mu$ with that for the Dyck path $\mu'$, one can show that 
two exponents coincide with each other by choosing $D(\mu')$ as Eq. (\ref{eq:Dmu}).

From these observations, since we have shown that the bijection preserves the weights, 
we have Eq. (\ref{eq:GxypDyck}).
\end{proof}

\begin{example}
Let $\mu=UUDUUDDU$ and $\mu'=UDUUDUUDDDUD$ be a symmetric Dyck path and a Dyck path 
respectively. 
We have $n=5$, and $\mathfrak{p}(\mu')=3$, $\mathrm{Area}(\mu')=4$.
Therefore, the both paths have the same valuation $p^6x^5y^{8}$.
\end{example}

\begin{remark}
We compare the expression in Proposition \ref{prop:GxypSym} and the one in 
Proposition \ref{prop:GxypDyck}.
As a formal power series, two expressions are the same. 
The difference between the two expressions is that whether we regard $G(x,y,p)$ 
as a formal power series in terms of $y$ or $x$.
Note that the expression in Proposition \ref{prop:GxypDyck} involves not only 
symmetric Dyck paths but also non-symmetric Dyck paths due to the bijection 
between Dyck paths and symmetric Dyck paths given in Corollary \ref{cor:DycksymDyck}.
\end{remark}

\bibliographystyle{amsplainhyper} 
\bibliography{biblio}

\end{document}